\numberwithin{algorithm}{section}
\begin{document}

\title{Eigenvalue Problems of Discrete Curl Operators on Various Lattices for Simulating Three Dimensional Photonic Crystals}
\author{%
Tsung-Ming Huang\thanks{Department of Mathematics, National Taiwan Normal University, Taipei, 116, Taiwan.}
\and Tiexiang Li\thanks{School of Mathematics, Southeast University, Nanjing 211189, People's Republic of China}
\and Wei-De Li\thanks{Department of Mathematics, National Tsing Hua University, Hsinchu 300, Taiwan.}
\and Jia-Wei Lin\thanks{Department of Applied Mathematics, National Chiao Tung University, Hsinchu 300, Taiwan.}
\and Wen-Wei Lin\thanks{Department of Applied Mathematics, National Chiao Tung University, Hsinchu 300, Taiwan.}
\and Heng Tian\thanks{Department of Applied Mathematics, National Chiao Tung University, Hsinchu 300, Taiwan.}
}

\date{\today }
\date{\today }
\maketitle

\begin{abstract}
There are many numerical methods for simulate three-dimensional photonic crystals, after comparison, we choose Yee's scheme to be our discrete method. So far, this method can only be applied to simple cubic lattice and face-centered cubic lattice, but there are 14 Bravais lattices in three-dimensional space. In this paper we extended Yee's scheme to all of the 14 Bravais lattices. After discretization, we get a general eigenvalue problem, and we will analyze this general eigenvalue problem. The second important task of this paper is to find the eigen-decomposition of the discrete curl operator, after a series of complicated calculations, we find that all the lattices can be summed up into two kinds of decomposition. We are interested in finding the several few smallest real eigenvalues, but the large dimension of null space is $\tfrac{1}{3}$ of all, this seriously affected the convergence of calculation, we use a technique which is called nullspace-free method to avoid this trouble. But this technique transforms the sparse matrix in our problem to a dense matrix, fortunately, the eigenvectors we found before are related to discrete Fourier transformation. The efficiency of calculation has been significantly improved by using the fast Fourier transformation. Finally, we calculate the band structures of photonic crystals on various lattices, and implement high performance calculations on the GPU.
\end{abstract}

\textbf{Keywords:} Maxwell's Equations, Photonic Crystal, Yee's Scheme, Bravais lattices, Eigen-decomposition, Nullspace-free Method.
\textbf{AMS Subject Classification:} 15A18, 65F08, 65F15, 65T50, 65Z05

\section{Introduction}\label{section1}

\subsection{Photonic Crystal}

Optoelectronics is one of the hottest areas in today's industry, and the most important component in dealing with optical signals is photonic crystals. A photonic crystal is a periodic structure in two-dimensional (2D) or three-dimensional (3D) real space which is composed of different periodic dielectrics, this structure is to imitate the arrangement of atoms in a crystalline solid, but the scale is hundreds of times than the crystalline solid. This material was first proposed by E. Yablonovitch \cite{yabl1987} and S. John \cite{john1987} in 1987, with the aim of extending the characteristics of electrons in semiconductors to electromagnetic waves. Photonic crystals can be classified as 1D, 2D, or 3D, since the numerical simulations of 1D or 2D photonic crystals are simpler, we only concentrate on the calculations of 3D photonic crystals.

The basic optical properties of photonic crystals are band gaps. Physicists have long known that the electrons in the crystal are affected by the periodic potential of the lattice, which leads to the energy band structure and the energy band gaps. Similarly, the propagation of light waves in photonic crystals also produce photonic band gaps, thst is, for specific structure, light wave with wavelength in some range can not pass through the photonic crystal, almost all of the applications and research about photonic crystals are relative to this feature. However, the width of the band gap and the corresponding frequency is difficult to explore from the experiment and observation, so numerical simulation plays an important role in understanding and designing the structure of photonic crystals. 

The initial study on photonic crystals focuses on trying to make the band gap larger, or hope to find the full band gap \cite{glml2001}. In addition, the defect states of photonic crystals have also been a lot of research, as long as to destruct the periodicity at local position, one can create a lot of useful nano-components, such as such as photonic crystal laser \cite{plsy1999, pavs1999}, photonic crystal waveguide \cite{ldvs2000}, photonic crystal fiber \cite{cmkb1999}, and integrated optical circuit \cite{ntyc2000}. In the other hand, the anomalous refraction effect is also an important research topic, especially the realization of negative refraction \cite{ljjp2002}, so that the manufacture of invisible cloak is no longer just fantasy.

In recent years, the emergence of topological photonic crystals (or photonic topological insulator) \cite{rzpl2013} make the research about photon crystal once again reached the peak. Topological photonic crystal is a kind of photonic crystal that behaves as a conductor on the surface but as an insulator in its interior, which brought a ray of light to overcome the bottleneck of the Moore's law. Although several 2D topological photonic crystals have been realized \cite{lujs2014}, very few 3D topological photonic crystals has been identified, hence finding the 3D topological photonic crystal has become a popular task. Lu et al. \cite{lfjs2013} proposed a numerical scheme to determine whether a photonic crystal is a topological photonic crystal, in this scheme, band structure calculation is involved in all major steps, therefore an efficient numerical computation method is indispensable. 

\subsection{Governing Equations}

The studies of photonic crystals are usually to discuss the propagation and scattering of light wave in photonic crystals, because the light is a kind of electromagnetic waves, and the behaviors of electromagnetic waves can be described by the Maxwell's equations, so we first introduce the Maxwell's equations \cite{thid2004}:
\begin{subequations}\label{eq:maxwell_original}
\begin{align}
\nabla \cdot D({\bf x}, t) &= {\bf \rho}_{f},\\
\nabla \cdot B({\bf x}, t) &= 0,\\
\nabla \times E({\bf x}, t) &= -\frac{\partial}{\partial t}B({\bf x}, t),\\
\nabla \times H({\bf x}, t) &= {\bf J}_{f} + \frac{\partial}{\partial t}D({\bf x}, t),
\end{align}
\end{subequations}
where the four vector fields $E$, $H$, $D$, and $B$ represent the electric field, magnetic field, electric displacement field, and magnetic induction field, repectively, all these vector fields depend on space and time, the vector ${\bf J}_{f}$ is the free current density, ${\bf\rho}_{f}$ is the free charge density. These four equations are called the Causs's law, the Gauss's law for magnetism, the Maxwell-Faraday equation, and the Maxwell-Amp\`{e}re equation, respectively. In numerical simulations of photonic crystals, we usually assume that the source of electomagnetic wave is far-away, ${\bf J}_{f}$ and ${\bf\rho}_{f}$ approach zero and so can be ignored. At this time, the equations \eqref{eq:maxwell_original} are reduced to the simple form which are called source free Maxwell's equations:

\begin{subequations}\label{eq:maxwell_free}
\begin{align}
\nabla \cdot D &= 0,\\
\nabla \cdot B &= 0,\\
\nabla \times E &= -\frac{\partial B}{\partial t},\label{eq:faraday}\\
\nabla \times H &= \frac{\partial D}{\partial t}.\label{eq:ampere}
\end{align}
\end{subequations}

Under the influence of electric field and magnetic field, the materials may have the polarisation and magnetisation. To specify the phenomena, we have to describe the so-called constitutive relation. That is, the vector fields $E$, $H$, $D$, and $B$ satisfy the following relations:
\begin{align}\label{eq:constitutive_relation}
D = \varepsilon E, \; B = \mu H,
\end{align}
where $\varepsilon$ is the permittivity and $\mu$ is the permeability. The two parameters depend on the materials that the electromagnetic wave travels. In vacuum, the parameters $\varepsilon = \varepsilon_{0}$ and $\mu = \mu_{0}$ are two constants. In dielectric materials, the parameters $\varepsilon = \varepsilon({\bf x})$ and $\mu = \mu({\bf x})$ are two position-dependent function, and photonic crystal is a kind of dielectric materials. Since we do not deal with magnetic materials in this artical, we set the permeability $\mu$ to be the vacuum permeability $\mu_{0}$. The permittivity $\varepsilon$ is called relative dielectric constants.

Moreover, photonic crystal is a kind of material with the periodic structure, and the electomagnetic wave travels in the material with periodic structure will satisfy the time harmonic assumption \cite{hvjs1961} and Bloch's theorem \cite{bloch1929}. If a time dependence factor $e^{-\imath \omega t}$ is assumed where $\omega$ is the frequency of the electomagnetic wave and $\imath = \sqrt{-1}$, then the electric field and magnetic field can be written as:
\begin{align}\label{eq:time_harmonic}
E({\bf x}, t) = e^{-\imath \omega t}E({\bf x}), \; H({\bf x}, t) = e^{-\imath \omega t}H({\bf x}).
\end{align}
Under the time harmonic assumption, the Maxwell's equations \eqref{eq:maxwell_free} is transformed from the time domain into the frequence domain. Substituting \eqref{eq:constitutive_relation} and \eqref{eq:time_harmonic} in \eqref{eq:maxwell_free}, we obtain the govering equations
\begin{subequations}\label{eq:governing}
\begin{align}
&\nabla \times \nabla \times E = \lambda\varepsilon E,\label{eq:double_curl}\\
&\nabla \cdot(\varepsilon E) = 0,\label{eq:divergence_free}
\end{align}
\end{subequations}
where $\lambda = \mu_{0}\omega^{2}$ is the unknown eigenvalue \cite{jjwm2011}.

Using Bloch's theorem, the electric field and the magnetic field are decomposed as:
\begin{align}\label{eq:bloch}
E({\bf x}) = e^{\imath 2\pi{\bf k}\cdot {\bf x}}u_{\bf k}({\bf x}), \; H({\bf x}) = e^{\imath 2\pi{\bf k}\cdot {\bf x}}v_{\bf k}({\bf x}),
\end{align}
where $2\pi{\bf k}$ is the Bloch wave vector, and $u_{\bf k}({\bf x})$ and $v_{\bf k}({\bf x})$ are periodic functions that satisfy
\begin{align}\label{eq:periodic_function}
u_{\bf k}({\bf x} + {\bf a}_{\ell}) = u_{\bf k}({\bf x}), \; v_{\bf k}({\bf x} + {\bf a}_{\ell}) = v_{\bf k}({\bf x}), \; \ell = 1, 2, 3,
\end{align}
here, the three vectors $\{{\bf a}_{\ell}\}_{\ell = 1}^{3}$ are the lattice translation vectors of crystals which we will introduce more details in next chapter. The Bloch wave vector $2\pi{\bf k}$ is the direction of wave propagation, we just have to consider the wave vectors belong to the first Brillouin zone for some well-known reasons \cite{hsieh2016}. Combining \eqref{eq:bloch} and \eqref{eq:periodic_function}, we can observe that
\begin{align}
E({\bf x} + {\bf a}_{\ell}) = e^{\imath 2\pi{\bf k}\cdot({\bf x}+{\bf a}_{\ell})}u_{\bf k}({\bf x} + {\bf a}_{\ell}) = e^{\imath 2\pi{\bf k}\cdot{\bf x}}e^{\imath 2\pi{\bf k}\cdot{\bf a}_{\ell}}u_{\bf k}({\bf x}) = e^{\imath 2\pi{\bf k}\cdot{\bf a}_{\ell}}E({\bf x}),\label{eq:quasi_periodic}\\
H({\bf x} + {\bf a}_{\ell}) = e^{\imath 2\pi{\bf k}\cdot({\bf x}+{\bf a}_{\ell})}v_{\bf k}({\bf x} + {\bf a}_{\ell}) = e^{\imath 2\pi{\bf k}\cdot{\bf x}}e^{\imath 2\pi{\bf k}\cdot{\bf a}_{\ell}}v_{\bf k}({\bf x}) = e^{\imath 2\pi{\bf k}\cdot{\bf a}_{\ell}}H({\bf x}),\label{eq:quasi_periodic_mag}
\end{align}
for $\ell = 1, 2, 3$. The equations \eqref{eq:quasi_periodic} and \eqref{eq:quasi_periodic_mag} are called quasi-periodic conditions \cite{resi1978}, and the special cases while ${\bf k}\cdot{\bf a}_{\ell} \in \mathbb{Z}$ are called periodic conditions. Given any wave vector, we aim to find Bloch eigenfunctions $E$ in \eqref{eq:double_curl} that satisfy the quasi-periodic condition \eqref{eq:quasi_periodic}. For other different boundary conditions, one can find more details by refer to \cite{hsieh2016}.

\subsection{Numerical Methods}

For the numerical simulation of photonic crystals, the common methods include: plane-wave expansion method (PWE) \cite{hocs1990, jovf1997, jojo2001, sako2004}, multiple scattering method \cite{guky2004, twer1952}, transfer-matrix method \cite{fowl1989}, finite-difference time-domain method (FDTD) \cite{jjwm2011, kulu1993, taha2005, yee1966}, and finite-difference frequency-domain method (FDFD) \cite{gwar2004, hhlw2013eig, hhlw2013matrix, xzhw2003, yasu2005, yuch2004}. Each method can be applicable on different objects and has its own advantages and disadvantages. Multiple scattering method can be used to calculate the electromagnetic field of cylindrical photonic crystals. Transfer-matrix method can be used to simulate one-dimensional photonic crystals. PWE is currently the most widely used method, this method is suitable for calculating the band structure of photonic crystals with periodic boundary condition in any directions, it is very efficient if we only want to calculate a few eigenpairs. But PWE can not be used in other boundary conditions, and the size of matrix in calculation will grow fast as the number of desired eigenpairs increase. To calculate more eigenpairs, the matrix size will be very large. FDTD and FDFD are very similar, both of them can be used to calculate the electromagnetic field of photonic crystals in any structure, but in the past these two methods are very time consuming. Different from the PWE, the size of matrix in calculation is always retaining in FDTD and FDFD, and they can be easily applied on any boundary conditions, so these two methods are more suitable when we want to find more eigenpairs and the boundary condition is not periodic. FDFD method has an additional limit, it can only be used while the time harmonic assumption is satisfied.

We will focus on FDFD in this artical. Huang et al \cite{hhlw2013matrix} used FDFD to give the explicit matrix form of the discrete curl type operators in face-centered cubic structure, and transformed \eqref{eq:double_curl} into a generalized eigenvalue problem. The dimension of null space of this generalized eigenvalue problem accounts for $\tfrac{1}{3}$ of all,  and if the several smallest positive eigenvalues are the target that we want to find,  any of the currently known eigensolver will be affected by null space and difficult to get the answer. Huang et al\cite{hhlw2013eig} calculated the eigen-decomposition of the matrix, and then reduced the generalized eigenvalue problem to a null space free standard eigenvalue problem by some matrix computation techniques. The price is the original sparse matrix becomes dense matrix, so the matrix-vector multiplication is expensive. Because the matrix-vector multiplication can be superseded by a variant fast Fourier transformation (FFT) , the computation cost is reduced significantly, hence FDFD is now the best way to calculate the band structure of 3D photonic crystals. Unfortunately, apart from the simple cubic structure and the face-centered cubic structure, these accerlerating techniques can not be applied so far, and each structure has its own matrix representation, we will extend these methods to various structures.

\subsection{Notations  and Organization}

Throught this dissertation, the imaginary number $\sqrt{-1}$ is defined as $\imath$ and the identity matrix of order $n$ is defined as $I_{n}$. We denote $\top$ and $\ast$ the transpose and the conjugate transpose of a vector or a matrix by the superscript, respectively. For the matrix operations, we denote the Kronecker product of two matrices $\otimes$ and direct sum of two matrices $\oplus$. The operator $\mbox{diag}({\bf v})$ create a square diagonal matrix with the elements of any vector ${\bf v} \in \mathbb{C}^{n}$ on the main diagonal. The operator $\mbox{vec}(\cdot)$ vectorizes a matrix by stacking the columns of the matrix. The conjugate of a complex scalar $z \in \mathbb{C}$ is written as $\overline{z}$ and the conjugate of a complex vector ${\bf z} \in \mathbb{C}^{n}$ is written as $\overline{\bf z}$. Other notations will be defined clearly whenever they appear.

This dissertation is organized as follows. We first introduce some basic background of crystallography in section \ref{section2}. Next, we use the Yee's scheme to discretize the curl operators and give the explicit matrix representations of the curl operators for various lattice structures in section \ref{section3}. In section \ref{section4}, we derive eigen-decompositions of the partial derivative operators. Applying the decompositions we propose the singular value decompositions (SVD) for single curl operators and some techniques of matrix computation in section \ref{section5}. We compute the band structures of 3D photonic crystals for 14 Bravais lattices in section \ref{section6}. Finally, we conclude this article in section \ref{section7}.


\section{Background}\label{section2}

We only consider the perfect crystals in this paper, and there are still quasicrystals, non-crystalline solid, and crystals with defects which we have not studied yet. In order to study properties of various crystals, it is necessary to have some background knowledge of crystallography \cite{hahn2002}. In this section, we give introduction to some basics of crystallography that will be needed.

Crystals have periodicity and symmetry, the periodicity is determined by the quasi-periodic condition \eqref{eq:quasi_periodic}, and the symmetry is determined by the point groups and space groups. Any point group and space group is composed of several symmetry operations. According to the Crystallographic restriction theorem, there are only 32 point groups and 230 space groups in 3D real space \cite{hahn2002}.

\subsection{Bravais lattice}

A crystal structure can be regarded as a lattice structure plus a basis. At present, millions of crystals are known, and each crystal has a different nature. Fortunately, there are only 14 Bravais lattices in 3D Euclidean space, and they belong to 7 lattice systems. The following table \ref{table:bravais} is a classification of the 14 Bravais lattices. In table \ref{table:bravais}, each graph is a unit cell, the simplest repeating unit in a crystal. The physical dimension of a 3D unit cell can be characterized by 6 lattice constants (or lattice parameters) : $a$, $b$, $c$, $\alpha$, $\beta$, and $\gamma$, where $a$, $b$, and $c$ are the edge lengths of the unit cell and $\alpha$, $\beta$, and $\gamma$ are the three angles between the sides. We show all these lattice constants in figure \ref{fig:lattice constant}.

\begin{figure}[H]
\center
\includegraphics[height=4cm]{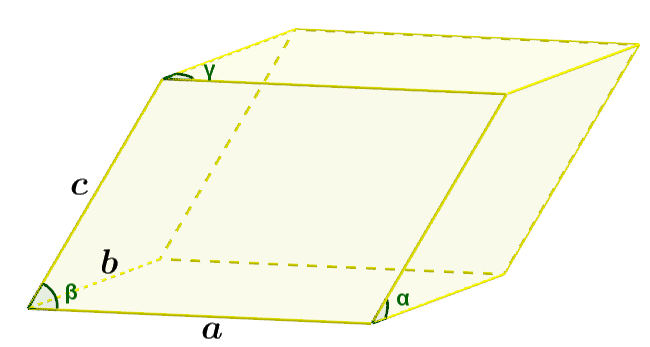}
\caption{Lattice constants of the unit cell.}
\label{fig:lattice constant}
\end{figure}

According to the periodicity, one may use the primitive cell to replace the unit cell in the calculation. The primitive cell is a minimum volume cell that is spanned by the primitive lattice vectors  $\tilde{\bf a}_1$, $\tilde{\bf a}_2$, and $\tilde{\bf a}_3$. Any other lattice vector is the linear combination of the primitive lattice vectors, with integral coefficients. In other words, the primitive lattice vectors constitute a basis of a selected Bravais lattice. Starting from a lattice point, one can reach every point by an integral linear combination of these lattice vectors.
\begin{table}[H]
\center
\includegraphics[height=7cm]{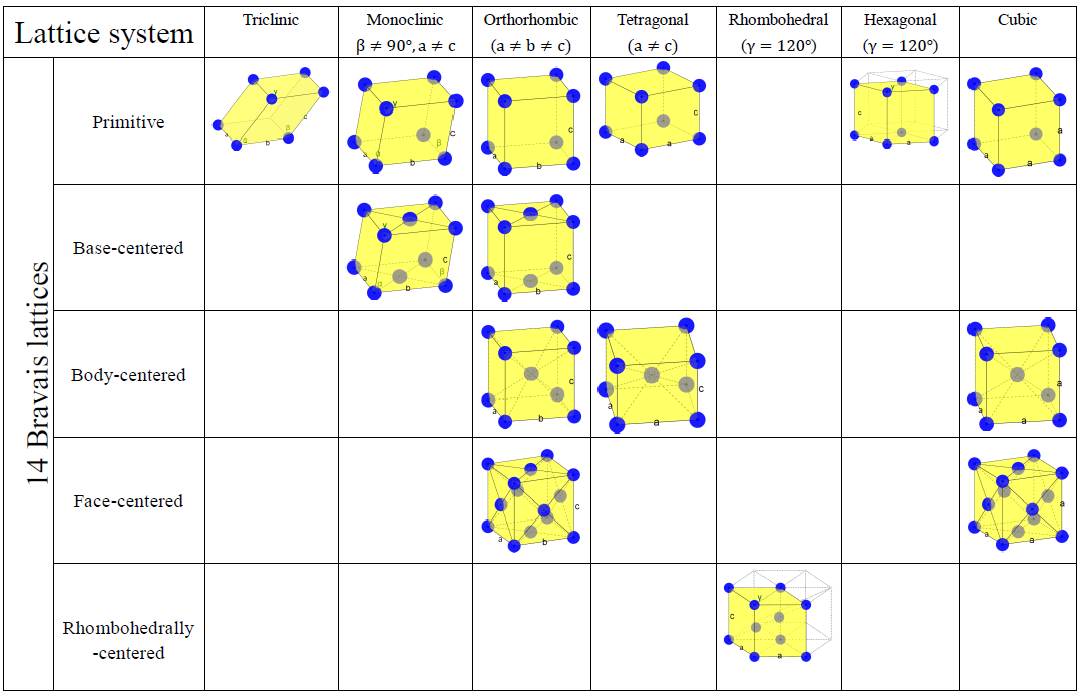}
\caption{The classification of 14 Bravais lattices in 3D space.}
\label{table:bravais}
\end{table}

\subsection{Primitive Translation Vectors}

For the purpose of introduce the FFT technique, we only consider the rectangular with edge parallel to the Cartesian coordinate axes, and we made a little modification to the standard primitive lattice vectors. After take an isometric transformation on them, we can get a new set of  lattice translation vectors ${\bf a}'_1$, ${\bf a}'_2$, and ${\bf a}'_3$. The isometric transformation will consist of the rotation of one vector to the $x$-axis, followed by another vector be located on the $xy$-plane. Furthermore, we get a new primitive cell is spanned by the lattice translation vectors. The new lattice translation vectors ${\bf a}'_{1}$, ${\bf a}'_{2}$ and ${\bf a}'_{3}$ are of the form
\begin{align*}
\begin{bmatrix}
\mbox{\bf a}'_{1} & \mbox{\bf a}'_{2} & \mbox{\bf a}'_{3}
\end{bmatrix} = \begin{bmatrix}
\|{\bf a}_1\| & \|{\bf a}_2\|\cos\theta_{\gamma}' & \|{\bf a}_3\|\cos\theta_{\beta}'\\
0 & \|{\bf a}_2\|\sin\theta_{\gamma}' & \|{\bf a}_3\|\frac{\cos\theta_{\alpha}' - \cos\theta_{\gamma}'\cos\theta_{\beta}'}{\sin\theta_{\gamma}'}\\
0 & 0 & \|{\bf a}_3\|\frac{\sqrt{1-\cos^{2}\theta_{\gamma}' - \cos^{2}\theta_{\beta}' - \cos^{2}\theta_{\alpha}' + 2\cos\theta_{\gamma}'\cos\theta_{\beta}'\cos\theta_{\alpha}'}}{\sin\theta_{\gamma}'}
\end{bmatrix},
\end{align*}
where $\|{\bf a}_1\|$, $\|{\bf a}_2\|$, and $\|{\bf a}_3\|$ are lengths of $\tilde{\bf a}_{1}$, $\tilde{\bf a}_{2}$, and $\tilde{\bf a}_{3}$, respectively, $\theta_{\gamma}'$ is the angle between $\tilde{\bf a}_{1}$ and $\tilde{\bf a}_{2}$, $\theta_{\beta}'$ is the angle between $\tilde{\bf a}_{1}$ and $\tilde{\bf a}_{3}$, and $\theta_{\alpha}'$ is the angle between $\tilde{\bf a}_{2}$ and $\tilde{\bf a}_{3}$ ($0 < \theta_{\gamma}', \theta_{\beta}', \theta_{\alpha}' < \pi$). These constants must satisfy the conditions $\|{\bf a}_1\| \geq \|{\bf a}_2\|, \|{\bf a}_3\|$ and $\|{\bf a}_2\|\sin\theta_{\gamma}' > \|{\bf a}_3\|\frac{\left|\cos\theta_{\alpha}' - \cos\theta_{\gamma}'\cos\theta_{\beta}'\right|}{\sin\theta_{\gamma}'}$, the constrains make all the lattice translation vectors ${\bf a}'_{1}$, ${\bf a}'_{2}$, and ${\bf a}'_{3}$ fall in the new primitive cell, otherwise the matrix representation will not have a general form while the angle changes.

First let the length of the primitive cell $l_{1} = \|{\bf a}_1\|$, then the mesh size along the $x$-axis direction $\delta_{x} = \frac{l_{1}}{n_{1}}$. According to the angle $\theta_{\gamma}'$, we have two cases as below:

\begin{itemize}
\item[{\bf{Case I:}}] $0 < \theta_{\gamma}' \leq \frac{\pi}{2}$. There is a nonnegative integer $0 \leq m_{1} \leq n_{1}$. We define
\begin{align}
m_{1} = \begin{cases}
\lfloor \frac{\|{\bf a}_2\|}{\delta_{x}}\cos\theta_{\gamma}'\rfloor &\mbox{if } 0 \leq \frac{\|{\bf a}_2\|}{\delta_{x}}\cos\theta_{\gamma}' - \lfloor \frac{\|{\bf a}_2\|}{\delta_{x}}\cos\theta_{\gamma}'\rfloor < \frac{1}{2},\\
\lceil \frac{\|{\bf a}_2\|}{\delta_{x}}\cos\theta_{\gamma}'\rceil &\mbox{if } 0 < \lceil \frac{\|{\bf a}_2\|}{\delta_{x}}\cos\theta_{\gamma}'\rceil - \frac{\|{\bf a}_2\|}{\delta_{x}}\cos\theta_{\gamma}' \leq \frac{1}{2}.
\end{cases}
\end{align}
Note that if $\lfloor \frac{\|{\bf a}_2\|}{\delta_{x}}\cos\theta_{\gamma}'\rfloor \neq \frac{\|{\bf a}_2\|}{\delta_{x}}\cos\theta_{\gamma}'$, the periodicity may be fail.  That is, either the initial point or the end point of ${\bf a}_{2}'$ will not be the grid point. In such case, we need to correct the angle $\theta_{\gamma}'$. To correct the angle, we set $\cos\theta_{\gamma} = \frac{m_{1}\delta_{x}}{\|{\bf a}_2\|}$, $\sin\theta_{\gamma} = \frac{\sqrt{\|{\bf a}_2\|^{2}-m_{1}^{2}\delta_{x}^{2}}}{\|{\bf a}_2\|}$. Of course $\theta_{\gamma} = \theta_{\gamma}'$ if $\lfloor \frac{\|{\bf a}_2\|}{\delta_{x}}\cos\theta_{\gamma}'\rfloor = \frac{\|{\bf a}_2\|}{\delta_{x}}\cos\theta_{\gamma}'$.

\item[{\bf{Case II:}}] $\frac{\pi}{2} < \theta_{\gamma}' < \pi$. There is a positive integer $0 \leq m_{1} \leq n_{1}$. We define
\begin{align}
m_{1} = \begin{cases}
\lfloor \frac{\|{\bf a}_1\| + \|{\bf a}_2\|\cos\theta_{\gamma}'}{\delta_{x}} \rfloor &\mbox{if } 0 \leq \frac{\|{\bf a}_1\| + \|{\bf a}_2\|\cos\theta_{\gamma}'}{\delta_{x}} - \lfloor \frac{\|{\bf a}_1\| + \|{\bf a}_2\|\cos\theta_{\gamma}'}{\delta_{x}}\rfloor < \frac{1}{2},\\
\lceil \frac{\|{\bf a}_1\| + \|{\bf a}_2\|\cos\theta_{\gamma}'}{\delta_{x}}\rceil &\mbox{if } 0 < \lceil \frac{\|{\bf a}_1\| + \|{\bf a}_2\|\cos\theta_{\gamma}'}{\delta_{x}}\rceil - \frac{\|{\bf a}_1\| + \|{\bf a}_2\|\cos\theta_{\gamma}'}{\delta_{x}} \leq \frac{1}{2}.
\end{cases}
\end{align}
If $\lfloor \frac{\|{\bf a}_1\| + \|{\bf a}_2\|\cos\theta_{\gamma}'}{\delta_{x}}\rfloor \neq \frac{\|{\bf a}_1\| + \|{\bf a}_2\|\cos\theta_{\gamma}'}{\delta_{x}}$, the periodicity may be fail.  In such case, we need to correct the angle $\theta_{\gamma}'$. To correct the angle, we set $\cos\theta_{\gamma} = \frac{m_{1}\delta_{x} - \|{\bf a}_1\|}{\|{\bf a}_2\|}$, $\sin\theta_{\gamma} = \frac{\sqrt{\|{\bf a}_2\|^{2} - (m_{1}\delta_{x} - \|{\bf a}_1\|)^{2}}}{\|{\bf a}_2\|}$. Of course $\theta_{\gamma} = \theta_{\gamma}'$ if $\lfloor \frac{\|{\bf a}_1\| + \|{\bf a}_2\|\cos\theta_{\gamma}'}{\delta_{x}}\rfloor = \frac{\|{\bf a}_1\| + \|{\bf a}_2\|\cos\theta_{\gamma}'}{\delta_{x}}$.
\end{itemize}

Now we can define the width of the primitive cell $l_{2} = \|{\bf a}_2\|\sin\theta_{\gamma}$, and the mesh size along the $y$-axis direction $\delta_{y} = \frac{l_{2}}{n_{2}}$. Similarly, according to the angle $\theta_{\beta}'$, we have two cases as below:
\begin{itemize}
\item[{\bf{Case I:}}] $0 < \theta_{\beta}' \leq \frac{\pi}{2}$. There is a nonnegative integer $0 \leq m_{2} \leq n_{1}$. We define
\begin{align}
m_{2} = \begin{cases}
\lfloor \frac{\|{\bf a}_3\|}{\delta_{x}}\cos\theta_{\beta}'\rfloor &\mbox{if } 0 \leq \frac{\|{\bf a}_3\|}{\delta_{x}}\cos\theta_{\beta}' - \lfloor \frac{\|{\bf a}_3\|}{\delta_{x}}\cos\theta_{\beta}'\rfloor < \frac{1}{2},\\
\lceil \frac{\|{\bf a}_3\|}{\delta_{x}}\cos\theta_{\beta}'\rceil &\mbox{if } 0 < \lceil \frac{\|{\bf a}_3\|}{\delta_{x}}\cos\theta_{\beta}'\rceil - \frac{\|{\bf a}_3\|}{\delta_{x}}\cos\theta_{\beta}' \leq \frac{1}{2}.
\end{cases}
\end{align}
Note that if $\lfloor \frac{\|{\bf a}_3\|}{\delta_{x}}\cos\theta_{\beta}'\rfloor \neq \frac{\|{\bf a}_3\|}{\delta_{x}}\cos\theta_{\beta}'$, the periodicity may be fail.  In such case, we need to correct the angle $\theta_{\beta}'$. To correct the angle, we set $\cos\theta_{\beta} = \frac{m_{2}\delta_{x}}{\|{\bf a}_3\|}$, $\sin\theta_{\beta} = \frac{\sqrt{\|{\bf a}_3\|^{2}-m_{2}^{2}\delta_{x}^{2}}}{\|{\bf a}_3\|}$. Of course $\theta_{\beta} = \theta_{\beta}'$ if $\lfloor \frac{\|{\bf a}_3\|}{\delta_{x}}\cos\theta_{\beta}'\rfloor = \frac{\|{\bf a}_3\|}{\delta_{x}}\cos\theta_{\beta}'$.

\item[{\bf{Case II:}}] $\frac{\pi}{2} < \theta_{\beta}' < \pi$. There is a positive integer $0 \leq m_{2} \leq n_{1}$. We define
\begin{align}
m_{2} = \begin{cases}
\lfloor \frac{\|{\bf a}_1\| + \|{\bf a}_3\|\cos\theta_{\beta}'}{\delta_{x}} \rfloor &\mbox{if } 0 \leq \frac{\|{\bf a}_1\| + \|{\bf a}_3\|\cos\theta_{\beta}'}{\delta_{x}} - \lfloor \frac{\|{\bf a}_1\| + \|{\bf a}_3\|\cos\theta_{\beta}'}{\delta_{x}}\rfloor < \frac{1}{2},\\
\lceil \frac{\|{\bf a}_1\| + \|{\bf a}_3\|\cos\theta_{\beta}'}{\delta_{x}}\rceil &\mbox{if } 0 < \lceil \frac{\|{\bf a}_1\| + \|{\bf a}_3\|\cos\theta_{\beta}'}{\delta_{x}}\rceil - \frac{\|{\bf a}_1\| + \|{\bf a}_3\|\cos\theta_{\beta}'}{\delta_{x}} \leq \frac{1}{2}.
\end{cases}
\end{align}
If $\lfloor \frac{\|{\bf a}_1\| + \|{\bf a}_3\|\cos\theta_{\beta}'}{\delta_{x}}\rfloor \neq \frac{\|{\bf a}_1\| + \|{\bf a}_3\|\cos\theta_{\beta}'}{\delta_{x}}$, the periodicity may be fail.  In such case, we need to correct the angle $\theta_{\beta}'$. To correct the angle, we set $\cos\theta_{\beta} = \frac{m_{2}\delta_{x} - \|{\bf a}_1\|}{\|{\bf a}_3\|}$, $\sin\theta_{\beta} = \frac{\sqrt{\|{\bf a}_3\|^{2} - (m_{2}\delta_{x} - \|{\bf a}_1\|)^{2}}}{\|{\bf a}_3\|}$. Of course $\theta_{\beta} = \theta_{\beta}'$ if $\lfloor \frac{\|{\bf a}_1\| + \|{\bf a}_3\|\cos\theta_{\beta}'}{\delta_{x}}\rfloor = \frac{\|{\bf a}_1\| + \|{\bf a}_3\|\cos\theta_{\beta}'}{\delta_{x}}$.
\end{itemize}

Finally, according to the value $\cos\theta_{\alpha}' - \cos\theta_{\gamma}\cos\theta_{\beta}$, we have two cases as below:
\begin{itemize}
\item[{\bf{Case I:}}] $0 < \cos\theta_{\alpha}' - \cos\theta_{\gamma}\cos\theta_{\beta}$. There is a nonnegative integer $0 \leq m_{3} \leq n_{2}$. We define
\begin{align}
m_{3} = \begin{cases}
\lfloor \frac{\|{\bf a}_3\|}{\delta_{y}}\frac{\cos\theta_{\alpha}' - \cos\theta_{\gamma}\cos\theta_{\beta}}{\sin\theta_{\gamma}}\rfloor \hspace{1em}\mbox{if }\\
\hspace{2cm} 0 \leq \frac{\|{\bf a}_3\|}{\delta_{y}}\frac{\cos\theta_{\alpha}' - \cos\theta_{\gamma}\cos\theta_{\beta}}{\sin\theta_{\gamma}} - \lfloor \frac{\|{\bf a}_3\|}{\delta_{y}}\frac{\cos\theta_{\alpha}' - \cos\theta_{\gamma}\cos\theta_{\beta}}{\sin\theta_{1}}\rfloor < \frac{1}{2},\\
\lceil \frac{\|{\bf a}_3\|}{\delta_{y}}\frac{\cos\theta_{\alpha}' - \cos\theta_{\gamma}\cos\theta_{\beta}}{\sin\theta_{\gamma}}\rceil \hspace{1em}\mbox{if }\\
\hspace{2cm} 0 < \lceil \frac{\|{\bf a}_3\|}{\delta_{y}}\frac{\cos\theta_{\alpha}' - \cos\theta_{\gamma}\cos\theta_{\beta}}{\sin\theta_{\gamma}}\rceil - \frac{\|{\bf a}_3\|}{\delta_{y}}\frac{\cos\theta_{\alpha}' - \cos\theta_{\gamma}\cos\theta_{\beta}}{\sin\theta_{\gamma}} \leq \frac{1}{2}.
\end{cases}
\end{align}
Note that if $\lfloor \frac{\|{\bf a}_3\|}{\delta_{y}}\frac{\cos\theta_{\alpha}' - \cos\theta_{\gamma}\cos\theta_{\beta}}{\sin\theta_{\gamma}}\rfloor \neq \frac{\|{\bf a}_3\|}{\delta_{y}}\frac{\cos\theta_{\alpha}' - \cos\theta_{\gamma}\cos\theta_{\beta}}{\sin\theta_{\gamma}}$, the periodicity may be fail.  In such case, we need to correct the angle $\theta_{\alpha}'$. To correct the angle, we set $\cos\theta_{\alpha} = \frac{m_{3}\delta_{y}}{\|{\bf a}_3\|}\sin\theta_{\gamma} + \cos\theta_{\gamma}\cos\theta_{\beta}$. Of course $\theta_{\alpha} = \theta_{\alpha}'$ if $\lfloor \frac{\|{\bf a}_3\|}{\delta_{y}}\frac{\cos\theta_{\alpha}' - \cos\theta_{\gamma}\cos\theta_{\beta}}{\sin\theta_{\gamma}}\rfloor = \frac{\|{\bf a}_3\|}{\delta_{y}}\frac{\cos\theta_{\alpha}' - \cos\theta_{\gamma}\cos\theta_{\beta}}{\sin\theta_{\gamma}}$.

\item[{\bf{Case II:}}] $\cos\theta_{\alpha}' - \cos\theta_{\gamma}\cos\theta_{\beta} < 0$. There is a positive integer $0 \leq m_{3} \leq n_{2}$. We define
\begin{align}
m_{3} = \begin{cases}
\lfloor \frac{\|{\bf a}_2\|\sin^{2}\theta_{\gamma} + \|{\bf a}_3\|(\cos\theta_{\alpha}' - \cos\theta_{\gamma}\cos\theta_{\beta})}{\delta_{y}\sin\theta_{\gamma}} \rfloor \hspace{1em}\mbox{if }\\
 \hspace{1em}0 \leq \frac{\|{\bf a}_2\|\sin^{2}\theta_{\gamma} + \|{\bf a}_3\|(\cos\theta_{\alpha}' - \cos\theta_{\gamma}\cos\theta_{\beta})}{\delta_{y}\sin\theta_{\gamma}} - \lfloor \frac{\|{\bf a}_2\|\sin^{2}\theta_{\gamma} + \|{\bf a}_3\|(\cos\theta_{\alpha}' - \cos\theta_{\gamma}\cos\theta_{\beta})}{\delta_{y}\sin\theta_{\gamma}}\rfloor < \frac{1}{2},\\
\lceil \frac{\|{\bf a}_2\|\sin^{2}\theta_{\gamma} + \|{\bf a}_3\|(\cos\theta_{\alpha}' - \cos\theta_{\gamma}\cos\theta_{\beta})}{\delta_{y}\sin\theta_{\gamma}}\rceil \hspace{1em}\mbox{if }\\
\hspace{1em} 0 < \lceil \frac{\|{\bf a}_2\|\sin^{2}\theta_{\gamma} + \|{\bf a}_3\|(\cos\theta_{\alpha}' - \cos\theta_{\gamma}\cos\theta_{\beta})}{\delta_{y}\sin\theta_{\gamma}}\rceil - \frac{\|{\bf a}_2\|\sin^{2}\theta_{\gamma} + \|{\bf a}_3\|(\cos\theta_{\alpha}' - \cos\theta_{\gamma}\cos\theta_{\beta})}{\delta_{y}\sin\theta_{\gamma}} \leq \frac{1}{2}.
\end{cases}
\end{align}
If $\lfloor \frac{\|{\bf a}_2\|\sin^{2}\theta_{\gamma} + \|{\bf a}_3\|(\cos\theta_{\alpha}' - \cos\theta_{\gamma}\cos\theta_{\beta})}{\delta_{y}\sin\theta_{\gamma}}\rfloor \neq \frac{\|{\bf a}_2\|\sin^{2}\theta_{\gamma} + \|{\bf a}_3\|(\cos\theta_{\alpha}' - \cos\theta_{\gamma}\cos\theta_{\beta})}{\delta_{y}\sin\theta_{\gamma}}$, the periodicity may be fail.  In such case, we need to correct the angle $\theta_{\alpha}'$. To correct the angle, we set $\cos\theta_{\alpha} = \frac{m_{3}\delta_{y}\sin\theta_{\gamma} - \|{\bf a}_2\|\sin^{2}\theta_{\gamma}}{\|{\bf a}_3\|} + \cos\theta_{\gamma}\cos\theta_{\beta}$. Of course $\theta_{\alpha} = \theta_{\alpha}'$ if $\lfloor \frac{\|{\bf a}_2\|\sin^{2}\theta_{\gamma} + \|{\bf a}_3\|(\cos\theta_{\alpha}' - \cos\theta_{\gamma}\cos\theta_{\beta})}{\delta_{y}\sin\theta_{\gamma}}\rfloor = \frac{\|{\bf a}_2\|\sin^{2}\theta_{\gamma} + \|{\bf a}_3\|(\cos\theta_{\alpha}' - \cos\theta_{\gamma}\cos\theta_{\beta})}{\delta_{y}\sin\theta_{\gamma}}$.
\end{itemize}

After the modification, the primitive lattice vectors ${\bf a}_{1}$, ${\bf a}_{2}$, and ${\bf a}_{3}$ now becomes
\begin{align}\label{eq:transvec}
\begin{split}
\begin{bmatrix}
{\bf a}_{1} & {\bf a}_{2} & {\bf a}_{3}\\
\end{bmatrix} = \begin{bmatrix}
\|{\bf a}_1\| & \|{\bf a}_2\|\cos\theta_{\gamma} & \|{\bf a}_3\|\cos\theta_{\beta}\\
0 & \|{\bf a}_2\|\sin\theta_{\gamma} & \|{\bf a}_3\|\frac{\cos\theta_{\alpha} - \cos\theta_{\gamma}\cos\theta_{\beta}}{\sin\theta_{\gamma}}\\
0 & 0 & \|{\bf a}_3\|\frac{\sqrt{1-\cos^{2}\theta_{\gamma} - \cos^{2}\theta_{\beta} - \cos^{2}\theta_{\alpha} + 2\cos\theta_{\gamma}\cos\theta_{\beta}\cos\theta_{\alpha}}}{\sin\theta_{\gamma}}
\end{bmatrix}.
\end{split}
\end{align}

The primitive cell that was spanned by the lattice translation vectors ${\bf a}_{1}$, ${\bf a}_{2}$ and ${\bf a}_{3}$ has length $l_{1}$, width $l_{2}$, and height $l_{3}$, these three numbers are given as $l_{1} = \|{\bf a}_1\|$, $l_{2} = \|{\bf a}_2\|\sin\theta_{\gamma}$, and $l_{3} = \|{\bf a}_3\|\frac{\sqrt{1-\cos^{2}\theta_{\gamma} - \cos^{2}\theta_{\beta} - \cos^{2}\theta_{\alpha} + 2\cos\theta_{\gamma}\cos\theta_{\beta}\cos\theta_{\alpha}}}{\sin\theta_{\gamma}}$, respectively. Furthermore, the grid length are $\delta_{x} = \frac{l_{1}}{n_{1}}$, $\delta_{y} = \frac{l_{2}}{n_{2}}$, and $\delta_{z} = \frac{l_{3}}{n_{3}}$.

\subsection{Reciprocal Lattice and Brillouin Zone}

Any Bravais lattice will become reciprocal lattice after Fourier transformation \cite{shmu2008}. In physics, reciprocal lattice is often used to describe the periodicity of the momentum space (also called reciprocal space or ``{\bf k}-space"). For any kind of Bravais lattice, if the primitive lattice vectors $\tilde{\bf a}_{1}$, $\tilde{\bf a}_{2}$, and $\tilde{\bf a}_{3}$ are fixed, then the corresponding reciprocal primitive lattice vectors are defined as
\begin{align*}
\begin{bmatrix}
\tilde{\bf b}_{1} & \tilde{\bf b}_{2} & \tilde{\bf b}_{3}
\end{bmatrix} = 2\pi\begin{bmatrix}
\tilde{\bf a}_{1} & \tilde{\bf a}_{2} & \tilde{\bf a}_{3}
\end{bmatrix}^{-\top}.
\end{align*}
Here, $\{\tilde{\bf b}_{1}, \tilde{\bf b}_{2}, \tilde{\bf b}_{3}\}$ constitute a basis of {\bf k}-space, any vectors in {\bf k}-space can be represented by the linearly combination of this basis.

The first Brillouin zone is a primitive cell contains the origin in the reciprocal space, and is the set of points in {\bf k}-space which start from the origin and do not pass through any Bragg plane. The irreducible Brillouin zone is a part of the first Brillouin zone, it can be obtained from reduce all of the symmetries in the point group. The importance of the Brillouin zone is that every Bloch vector in the periodic material can be certeined in this space. We provide all the first Brillouin zones and irreducible Brillouin zones in Appendix. If we want to compute the band structure, we only have to solve the eigenvalue problems associated with the wave vectors along the segments connecting any two corner points of the associated irreducible Brillouin zone.

Since we transferred the lattice vectors $\tilde{\bf a}_{1}$, $\tilde{\bf a}_{2}$, and $\tilde{\bf a}_{3}$ into the lattice translation vectors ${\bf a}_{1}$, ${\bf a}_{2}$, and ${\bf a}_{3}$, we need to re-define the corresponding reciprocal lattice vectors as
\begin{align*}
\begin{bmatrix}
{\bf b}_{1} & {\bf b}_{2} & {\bf b}_{3}
\end{bmatrix} = 2\pi\begin{bmatrix}
{\bf a}_{1} & {\bf a}_{2} & {\bf a}_{3}
\end{bmatrix}^{-\top}.
\end{align*}
The transformation $\Omega$ between reciprocal primitive lattice and the new reciprocal lattice satisfies
\begin{align*}
\begin{bmatrix}
{\bf b}_{1} & {\bf b}_{2} & {\bf b}_{3}
\end{bmatrix} = \Omega\begin{bmatrix}
\tilde{\bf b}_{1} & \tilde{\bf b}_{2} & \tilde{\bf b}_{3}
\end{bmatrix},
\end{align*}
so the transformation $\Omega$ can be defined as
\begin{align*}
\Omega = \begin{bmatrix}
{\bf b}_{1} & {\bf b}_{2} & {\bf b}_{3}
\end{bmatrix}\begin{bmatrix}
\tilde{\bf b}_{1} & \tilde{\bf b}_{2} & \tilde{\bf b}_{3}
\end{bmatrix}^{-1} = \begin{bmatrix}
{\bf a}_{1} & {\bf a}_{2} & {\bf a}_{3}
\end{bmatrix}^{-\top}\begin{bmatrix}
\tilde{\bf a}_{1} & \tilde{\bf a}_{2} & \tilde{\bf a}_{3}
\end{bmatrix}^{\top}.
\end{align*}
Using the transformation, we can define the new corner points of the new irreducible Brillouin zone. Take BCC as an example, the corner points are: 
\begin{align*}
\widetilde{\Gamma} &= \tfrac{2\pi}{a}\begin{bmatrix}
0 & 0 & 0
\end{bmatrix}^{\top}, &\widetilde{H} = \tfrac{2\pi}{a}\begin{bmatrix}
0 & 1 & 0
\end{bmatrix}^{\top},\\
\widetilde{N} &= \tfrac{2\pi}{a}\begin{bmatrix}
\tfrac{1}{2} & \tfrac{1}{2} & 0
\end{bmatrix}^{\top}, &\widetilde{P} = \tfrac{2\pi}{a}\begin{bmatrix}
\tfrac{1}{2} & \tfrac{1}{2} & \tfrac{1}{2}
\end{bmatrix}^{\top}.
\end{align*}
We redefine the new corner points to be $\Gamma = \Omega\tilde{\Gamma}$, $H = \Omega\tilde{H}$, $N = \Omega\tilde{N}$, and $P = \Omega\tilde{P}$.

\subsection{Point Groups and Space Groups}

In crystallography, the most important tools used to describe symmetry are the point groups and the space groups. Each point group is a symmetric group containing a variety symmetric operators, and at least one point is fixed after the operators. Space group is a collection of all possible symmetric operations in various dimensions, usually in 3D real space. The most difference between this two is: point group is a microscopic proprty, from the fixed point of view, the other atoms seem to be invariant after the symmetric operations of the point group, since we only consider the directions from the fixed point to other atoms, the distance is not taken into account, so there is no translation operator in the point group; space group is a macroscopic property, and can describe the relation position of the atomic arrangement in the crystal from the outside. All the materials we study belong to one of the 230 space groups.

There are many methods of naming point groups, and the two most commonly used are Sch\"{o}nflies notation and Hermann-Mauguin notation, where Sch\"{o}nflies notation is the language used by mathematicians, and all symbols are the same as in algebra; Hermann-Mauguin notation also known as international symbol, one can easily understanding the symmetry in point group from this notation. Now let us detail the point groups and the space groups.

Although there are infinite point groups in 3D real space, however, there are only 32 point groups in crystals from the crystallographic restriction theorem. Compared to the point groups, the space groups contain screw axes and glide planes in addition, where screw axis means the rotation axis add a translation operator along the axis, and glide plane means the mirror plane add a translation operator along one side of the plane. Please find more details by refer to \cite{hahn2002}.


\section{Explicit matrix representation of the curl operator}\label{section3}

In \cite{hhlw2013matrix}, Huang et al derived the matrix representation of curl operator for FCC lattice, since all the length of lattice vectors ${\bf a}_{1}$, ${\bf a}_{2}$, ${\bf a}_{3}$ are equal and the angles between each two vectors are special, this case is relatively simple. Now we are going to give an exact matrix representation of the curl operator for various lattices.  As we have illustrated before, the govering equation \eqref{eq:double_curl} can be rewritten as following
\begin{align*}
\nabla \times E &= \imath\omega\mu_{0} H,\\
\nabla \times H &= -\imath\omega\varepsilon E,
\end{align*}
and we use the Yee's scheme to descretize these two equations. The curl operator can be represented as the matrix form
\begin{align}
\nabla \times E = \begin{bmatrix}
0 & -\partial_{z} & \partial_{y}\\
\partial_{z} & 0 & -\partial_{x}\\
-\partial_{y} & \partial_{x} & 0
\end{bmatrix}\begin{bmatrix}
E_{1}\\
E_{2}\\
E_{3}
\end{bmatrix}.
\end{align}
Let $C$ be the discrete matrix of curl operator which is defined by
\begin{align*}
C = \begin{bmatrix}
0 & -C_{3} & C_{2}\\
C_{3} & 0 & -C_{1}\\
-C_{2} & C_{1} & 0
\end{bmatrix}\in \mathbb{C}^{3n \times 3n},
\end{align*}
where $C_{1}$, $C_{2}$, $C_{3}$ are the discretization of the partial derivative operators $\partial_{x}$, $\partial_{y}$, and $\partial_{z}$, respectively, the double curl operator $\nabla \times\nabla\times$ has the discrete matrix form $C^{\ast}C$. From now on, we set $\varepsilon = [\varepsilon_{1}, \varepsilon_{2}, \varepsilon_{3}]^{\top}$ and 
\begin{align*}
B_{1} = B_{1}(\hat{i}, j, k) = \varepsilon_{1}({\bf x}(\hat{i}, j, k)),\\
B_{2} = B_{2}(i, \hat{j}, k) = \varepsilon_{2}({\bf x}(i, \hat{j}, k)),\\
B_{3} = B_{3}(i, j, \hat{k}) = \varepsilon_{3}({\bf x}(i, j, \hat{k})),
\end{align*}
for $i = 0, 1, \cdots, n_{1} - 1$, $j = 0, 1, \cdots, n_{2} - 1$, $k = 0, 1, \cdots, n_{3}-1$, and set
\begin{align*}
B = \mbox{diag}\left[[\mbox{vec}({B_{1}})^{\top}, \mbox{vec}({B_{2}})^{\top}, \mbox{vec}({B_{3}})^{\top}]^{\top}\right] \in \mathbb{C}^{3n \times 3n}.
\end{align*}
Let ${\bf e}_{\ell} = \mbox{vec}(E_{\ell})$ and ${\bf h}_{\ell} = \mbox{vec}(H_{\ell})$ for $\ell = 1, 2, 3$, then we can define
\begin{align*}
{\bf e} = [{\bf  e}_{1}^{\top}, {\bf  e}_{2}^{\top}, {\bf  e}_{3}^{\top}]^{\top} \in \mathbb{C}^{3n} \;\; \mbox{and} \;\; {\bf h} = [{\bf  h}_{1}^{\top}, {\bf  h}_{2}^{\top}, {\bf  h}_{3}^{\top}]^{\top} \in \mathbb{C}^{3n}.
\end{align*}

In this section, we will derive the matrix representation of $C_{1}$, $C_{2}$, and $C_{3}$, and therefore obtain the explicit form of $C$.


\subsection{Matrix representation of curl operator}

We first list all the matrix representations of the partial derivative operators:
\begin{align*}
C_{1} = I_{n_{3}} \otimes I_{n_{2}} \otimes K_{1} \in \mathbb{C}^{n \times n}, 
\end{align*}
and
\begin{align}\label{eq:K1}
K_{1} = \dfrac{1}{\delta_{x}}\begin{bmatrix}
-1 & 1 & & & \\
 & -1 & 1 &  & \\
 & & \ddots & \ddots & \\
 & & & -1 & 1\\
e^{\imath 2\pi {\bf k}\cdot{\bf a}_{1}} & & & & -1 
\end{bmatrix} \in \mathbb{C}^{n_{1} \times n_{1}},
\end{align}
\begin{align*}
C_{2} = I_{n_{3}} \otimes K_{2} \in \mathbb{C}^{n \times n},
\end{align*}
and
\begin{align}\label{eq:K2}
K_{2} = \dfrac{1}{\delta_{y}}\begin{bmatrix}
-I_{n_{1}} & I_{n_{1}} & & & \\
 & -I_{n_{1}} & I_{n_{1}} & & \\
 & & \ddots & \ddots & \\
 & & & -I_{n_{1}} & I_{n_{1}}\\
e^{\imath 2\pi{\bf k}\cdot{\bf a}_{2}}J_{2} & & & & -I_{n_{1}}
\end{bmatrix} \in \mathbb{C}^{n_{1}n_{2} \times n_{1}n_{2}},
\end{align}
\begin{align}\label{eq:K3}
C_{3} = K_{3} = \dfrac{1}{\delta_{z}}\begin{bmatrix}
-I_{n_{1}\times n_{2}} & I_{n_{1}\times n_{2}} & & & \\
  & -I_{n_{1}\times n_{2}} & I_{n_{1}\times n_{2}} &  & \\
 & & \ddots & \ddots & \\
 & & & -I_{n_{1}\times n_{2}} & I_{n_{1}\times n_{2}}\\
e^{\imath 2\pi{\bf k}\cdot{\bf a}_{3}}J_{3} & & & & -I_{n_{1}\times n_{2}}
\end{bmatrix} \in \mathbb{C}^{n \times n},
\end{align}
where $J_{2}$ and $J_{3}$ are very different in each lattices as following.

\begin{subequations}
\begin{align}\label{eq:acuteJ2}
J_{2} = \begin{bmatrix}
0 & e^{-\imath 2\pi{\bf k}\cdot{\bf a}_{1}}I_{m_{1}}\\
I_{(n_{1}-m_{1})} & 0
\end{bmatrix} \hspace{1em}\mbox{if}\hspace{1em}0 < \theta_{\gamma} \leq \frac{\pi}{2},
\end{align}
and
\begin{align}\label{eq:obtuseJ2}
J_{2} = \begin{bmatrix}
0 & I_{m_{1}}\\
e^{\imath 2\pi{\bf k}\cdot{\bf a}_{1}}I_{(n_{1}-m_{1})} & 0
\end{bmatrix} \hspace{1em}\mbox{if}\hspace{1em} \frac{\pi}{2} < \theta_{\gamma} < \pi.
\end{align}
\end{subequations}

$J_{3}$ is depend on $\theta_{\alpha}$, $\theta_{\beta}$, and $\theta_{\gamma}$, all the situations we will list in the following:
\begin{subequations}\label{eq:J3case16}
\begin{itemize}
\item[1.] $0 \leq \cos\theta_{\gamma}$, $0 \leq \cos\theta_{\beta}$, $0 \leq \cos\theta_{\alpha} - \cos\theta_{\gamma}\cos\theta_{\beta}$.
\begin{itemize}
\item[(a)] $m_{1} \leq m_{2}$
\begin{align}
&J_{3}\notag\\
=& \begin{bmatrix}
\begin{smallmatrix}
 & e^{-\imath 2\pi {\bf k}\cdot{\bf a}_{2}}I_{m_{3}}\otimes\begin{bmatrix}
\begin{smallmatrix}
 & e^{-\imath 2\pi {\bf k}\cdot{\bf a}_{1}}I_{m_{2} - m_{1}}\\
I_{n_{1} - m_{2} + m_{1}} &
\end{smallmatrix}
\end{bmatrix}\\
I_{n_{2} - m_{3}}\otimes\begin{bmatrix}
\begin{smallmatrix}
 & e^{-\imath 2\pi {\bf k}\cdot{\bf a}_{1}}I_{m_{2}}\\
I_{n_{1} - m_{2}} & 
\end{smallmatrix}
\end{bmatrix} &
\end{smallmatrix}
\end{bmatrix}.
\end{align}
\item[(b)] $m_{1} > m_{2}$
\begin{align}
&J_{3}\notag\\
=& \begin{bmatrix}
\begin{smallmatrix}
 & e^{-\imath 2\pi {\bf k}\cdot{\bf a}_{2}}I_{m_{3}}\otimes\begin{bmatrix}
\begin{smallmatrix}
 & I_{n_{1} - m_{1} + m_{2}}\\
e^{\imath 2\pi {\bf k}\cdot{\bf a}_{1}}I_{ m_{1} - m_{2}} &
\end{smallmatrix}
\end{bmatrix}\\
I_{n_{2} - m_{3}}\otimes\begin{bmatrix}
\begin{smallmatrix}
 & e^{-\imath 2\pi {\bf k}\cdot{\bf a}_{1}}I_{m_{2}}\\
I_{n_{1} - m_{2}} & 
\end{smallmatrix}
\end{bmatrix} &
\end{smallmatrix}
\end{bmatrix}.
\end{align}
\end{itemize}

\item[2.] $0 \leq \cos\theta_{\gamma}$, $0 \leq \cos\theta_{\beta}$, $0 > \cos\theta_{\alpha} - \cos\theta_{\gamma}\cos\theta_{\beta}$.
\begin{itemize}
\item[(a)] $m_{1} + m_{2} \leq n_{1}$
\begin{align}
&J_{3}\notag\\
=& \begin{bmatrix}
\begin{smallmatrix}
 & I_{m_{3}}\otimes\begin{bmatrix}
\begin{smallmatrix}
 & e^{-\imath 2\pi {\bf k}\cdot{\bf a}_{1}}I_{m_{2}}\\
I_{n_{1} - m_{2}} &
\end{smallmatrix}
\end{bmatrix}\\
e^{\imath 2\pi {\bf k}\cdot{\bf a}_{2}}I_{n_{2} - m_{3}}\otimes\begin{bmatrix}
\begin{smallmatrix}
 & e^{-\imath 2\pi {\bf k}\cdot{\bf a}_{1}}I_{m_{1} + m_{2}}\\
I_{n_{1} - m_{1} - m_{2}} & 
\end{smallmatrix}
\end{bmatrix} &
\end{smallmatrix}
\end{bmatrix}.
\end{align}
\item[(b)] $m_{1} + m_{2} > n_{1}$
\begin{align}
&J_{3}\notag\\
=& \begin{bmatrix}
\begin{smallmatrix}
 & I_{m_{3}}\otimes\begin{bmatrix}
\begin{smallmatrix}
 & e^{-\imath 2\pi {\bf k}\cdot{\bf a}_{1}}I_{m_{2}}\\
I_{ n_{1} - m_{2}} &
\end{smallmatrix}
\end{bmatrix}\\
e^{\imath 2\pi {\bf k}\cdot({\bf a}_{2} - {\bf a}_{1})}I_{n_{2} - m_{3}}\otimes\begin{bmatrix}
\begin{smallmatrix}
 & e^{-\imath 2\pi {\bf k}\cdot{\bf a}_{1}}I_{m_{1} + m_{2} - n_{1}}\\
I_{2n_{1} - m_{1} - m_{2}} & 
\end{smallmatrix}
\end{bmatrix} &
\end{smallmatrix}
\end{bmatrix}.
\end{align}
\end{itemize}

\item[3.] $0 \leq \cos\theta_{\gamma}$, $0 > \cos\theta_{\beta}$, $0 \leq \cos\theta_{\alpha} - \cos\theta_{\gamma}\cos\theta_{\beta}$.
\begin{itemize}
\item[(a)] $m_{1} \leq m_{2}$
\begin{align}
&J_{3}\notag\\
=& \begin{bmatrix}
\begin{smallmatrix}
 & e^{-\imath 2\pi {\bf k}\cdot{\bf a}_{2}}I_{m_{3}}\otimes\begin{bmatrix}
\begin{smallmatrix}
 & I_{m_{2} - m_{1}}\\
e^{\imath 2\pi {\bf k}\cdot{\bf a}_{1}}I_{n_{1} - m_{2} + m_{1}} &
\end{smallmatrix}
\end{bmatrix}\\
I_{n_{2} - m_{3}}\otimes\begin{bmatrix}
\begin{smallmatrix}
 & I_{m_{2}}\\
e^{\imath 2\pi {\bf k}\cdot{\bf a}_{1}}I_{n_{1} - m_{2}} & 
\end{smallmatrix}
\end{bmatrix} &
\end{smallmatrix}
\end{bmatrix}.
\end{align}
\item[(b)] $m_{1} > m_{2}$
\begin{align}
&J_{3}\notag\\
=& \begin{bmatrix}
\begin{smallmatrix}
 & e^{\imath 2\pi {\bf k}\cdot({\bf a}_{1} - {\bf a}_{2})}I_{m_{3}}\otimes\begin{bmatrix}
\begin{smallmatrix}
 & I_{n_{1} - m_{1}+m_{2}}\\
e^{\imath 2\pi {\bf k}\cdot{\bf a}_{1}}I_{ m_{1} - m_{2}} &
\end{smallmatrix}
\end{bmatrix}\\
I_{n_{2} - m_{3}}\otimes\begin{bmatrix}
\begin{smallmatrix}
 & I_{m_{2}}\\
e^{\imath 2\pi {\bf k}\cdot{\bf a}_{1}}I_{n_{1}- m_{2}} & 
\end{smallmatrix}
\end{bmatrix} &
\end{smallmatrix}
\end{bmatrix}.
\end{align}
\end{itemize}

\item[4.] $0 \leq \cos\theta_{\gamma}$, $0 > \cos\theta_{\beta}$, $0 > \cos\theta_{\alpha} - \cos\theta_{\gamma}\cos\theta_{\beta}$.
\begin{itemize}
\item[(a)] $m_{1} + m_{2}\leq n_{1}$
\begin{align}
&J_{3}\notag\\
=& \begin{bmatrix}
\begin{smallmatrix}
 & I_{m_{3}}\otimes\begin{bmatrix}
\begin{smallmatrix}
 & I_{m_{2}}\\
e^{\imath 2\pi {\bf k}\cdot{\bf a}_{1}}I_{n_{1} - m_{2}} &
\end{smallmatrix}
\end{bmatrix}\\
e^{\imath 2\pi {\bf k}\cdot{\bf a}_{2}}I_{n_{2} - m_{3}}\otimes\begin{bmatrix}
\begin{smallmatrix}
 & I_{m_{1} + m_{2}}\\
e^{\imath 2\pi {\bf k}\cdot{\bf a}_{1}}I_{n_{1} - m_{1} - m_{2}} & 
\end{smallmatrix}
\end{bmatrix} &
\end{smallmatrix}
\end{bmatrix}.
\end{align}
\item[(b)] $m_{1}  + m_{1} > n_{1}$
\begin{align}
&J_{3}\notag\\
=& \begin{bmatrix}
\begin{smallmatrix}
 & I_{m_{3}}\otimes\begin{bmatrix}
\begin{smallmatrix}
 & I_{m_{2}}\\
e^{\imath 2\pi {\bf k}\cdot{\bf a}_{1}}I_{ n_{1} - m_{2}} &
\end{smallmatrix}
\end{bmatrix}\\
e^{\imath 2\pi {\bf k}\cdot{\bf a}_{2}}I_{n_{2} - m_{3}}\otimes\begin{bmatrix}
\begin{smallmatrix}
 & e^{-\imath 2\pi {\bf k}\cdot{\bf a}_{1}}I_{m_{1} + m_{2} - n_{1}}\\
I_{2n_{1}- m_{1} - m_{2}} & 
\end{smallmatrix}
\end{bmatrix} &
\end{smallmatrix}
\end{bmatrix}.
\end{align}
\end{itemize}

\item[5.] $0 > \cos\theta_{\gamma}$, $0 \leq \cos\theta_{\beta}$, $0 \leq \cos\theta_{\alpha} - \cos\theta_{\gamma}\cos\theta_{\beta}$.
\begin{itemize}
\item[(a)] $m_{1} \leq m_{2}$
\begin{align}
&J_{3}\notag\\
=& \begin{bmatrix}
\begin{smallmatrix}
 & e^{-\imath 2\pi {\bf k}\cdot({\bf a}_{1} + {\bf a}_{2})}I_{m_{3}}\otimes\begin{bmatrix}
\begin{smallmatrix}
 & e^{-\imath 2\pi {\bf k}\cdot{\bf a}_{1}}I_{m_{2} - m_{1}}\\
I_{n_{1} - m_{2} + m_{1}} &
\end{smallmatrix}
\end{bmatrix}\\
I_{n_{2} - m_{3}}\otimes\begin{bmatrix}
\begin{smallmatrix}
 & e^{-\imath 2\pi {\bf k}\cdot{\bf a}_{1}}I_{m_{2}}\\
I_{n_{1} - m_{2}} & 
\end{smallmatrix}
\end{bmatrix} &
\end{smallmatrix}
\end{bmatrix}.
\end{align}
\item[(b)] $m_{1} > m_{2}$
\begin{align}
&J_{3}\notag\\
=& \begin{bmatrix}
\begin{smallmatrix}
 & e^{-\imath 2\pi {\bf k}\cdot{\bf a}_{2}}I_{m_{3}}\otimes\begin{bmatrix}
\begin{smallmatrix}
 & e^{-\imath 2\pi {\bf k}\cdot{\bf a}_{1}}I_{n_{1} - m_{1}+m_{2}}\\
I_{ m_{1} - m_{2}} &
\end{smallmatrix}
\end{bmatrix}\\
I_{n_{2} - m_{3}}\otimes\begin{bmatrix}
\begin{smallmatrix}
 & e^{-\imath 2\pi {\bf k}\cdot{\bf a}_{1}}I_{m_{2}}\\
I_{n_{1}- m_{2}} & 
\end{smallmatrix}
\end{bmatrix} &
\end{smallmatrix}
\end{bmatrix}.
\end{align}
\end{itemize}

\item[6.] $0 > \cos\theta_{\gamma}$, $0 \leq \cos\theta_{\beta}$, $0 > \cos\theta_{\alpha} - \cos\theta_{\gamma}\cos\theta_{\beta}$.
\begin{itemize}
\item[(a)] $m_{1} + m_{2} \leq n_{1}$
\begin{align}
&J_{3}\notag\\
=& \begin{bmatrix}
\begin{smallmatrix}
 & I_{m_{3}}\otimes\begin{bmatrix}
\begin{smallmatrix}
 & e^{-\imath 2\pi {\bf k}\cdot{\bf a}_{1}}I_{m_{2}}\\
I_{n_{1} - m_{2}} &
\end{smallmatrix}
\end{bmatrix}\\
e^{\imath 2\pi {\bf k}\cdot{\bf a}_{2}}I_{n_{2} - m_{3}}\otimes\begin{bmatrix}
\begin{smallmatrix}
 & I_{m_{1} + m_{2}}\\
e^{\imath 2\pi {\bf k}\cdot{\bf a}_{1}}I_{n_{1} - m_{1} - m_{2}} & 
\end{smallmatrix}
\end{bmatrix} &
\end{smallmatrix}
\end{bmatrix}.
\end{align}
\item[(b)] $m_{1} + m_{1}> n_{1}$
\begin{align}
&J_{3}\notag\\
=& \begin{bmatrix}
\begin{smallmatrix}
 & I_{m_{3}}\otimes\begin{bmatrix}
\begin{smallmatrix}
 & e^{-\imath 2\pi {\bf k}\cdot{\bf a}_{1}}I_{m_{2}}\\
I_{ n_{1} - m_{2}} &
\end{smallmatrix}
\end{bmatrix}\\
e^{\imath 2\pi {\bf k}\cdot{\bf a}_{2}}I_{n_{2} - m_{3}}\otimes\begin{bmatrix}
\begin{smallmatrix}
 & e^{-\imath 2\pi {\bf k}\cdot{\bf a}_{1}}I_{m_{1} + m_{2} - n_{1}}\\
I_{2n_{1}- m_{1} - m_{2}} & 
\end{smallmatrix}
\end{bmatrix} &
\end{smallmatrix}
\end{bmatrix}.
\end{align}
\end{itemize}

\item[7.] $0 > \cos\theta_{\gamma}$, $0 > \cos\theta_{\beta}$, $0 \leq \cos\theta_{\alpha} - \cos\theta_{\gamma}\cos\theta_{\beta}$.
\begin{itemize}
\item[(a)] $m_{1} \leq m_{2}$
\begin{align}
&J_{3}\notag\\
=& \begin{bmatrix}
\begin{smallmatrix}
 & e^{-\imath 2\pi {\bf k}\cdot{\bf a}_{2}}I_{m_{3}}\otimes\begin{bmatrix}
\begin{smallmatrix}
 & e^{-\imath 2\pi {\bf k}\cdot{\bf a}_{1}}I_{m_{2} - m_{1}}\\
I_{n_{1} - m_{2} + m_{1}} &
\end{smallmatrix}
\end{bmatrix}\\
I_{n_{2} - m_{3}}\otimes\begin{bmatrix}
\begin{smallmatrix}
 & I_{m_{2}}\\
e^{\imath 2\pi {\bf k}\cdot{\bf a}_{1}}I_{n_{1} - m_{2}} & 
\end{smallmatrix}
\end{bmatrix} &
\end{smallmatrix}
\end{bmatrix}.
\end{align}
\item[(b)] $m_{1} > m_{2}$
\begin{align}
&J_{3}\notag\\
=& \begin{bmatrix}
\begin{smallmatrix}
 & e^{-\imath 2\pi {\bf k}\cdot{\bf a}_{2}}I_{m_{3}}\otimes\begin{bmatrix}
\begin{smallmatrix}
 & I_{n_{1} - m_{1}+m_{2}}\\
e^{\imath 2\pi {\bf k}\cdot{\bf a}_{1}}I_{ m_{1} - m_{2}} &
\end{smallmatrix}
\end{bmatrix}\\
I_{n_{2} - m_{3}}\otimes\begin{bmatrix}
\begin{smallmatrix}
 & I_{m_{2}}\\
e^{\imath 2\pi {\bf k}\cdot{\bf a}_{1}}I_{n_{1}- m_{2}} & 
\end{smallmatrix}
\end{bmatrix} &
\end{smallmatrix}
\end{bmatrix}.
\end{align}
\end{itemize}

\item[8.] $0 > \cos\theta_{\gamma}$, $0 > \cos\theta_{\beta}$, $0 > \cos\theta_{\alpha} - \cos\theta_{\gamma}\cos\theta_{\beta}$.
\begin{itemize}
\item[(a)] $m_{1} + m_{2} \leq n_{1}$
\begin{align}
&J_{3}\notag\\
=& \begin{bmatrix}
\begin{smallmatrix}
 & I_{m_{3}}\otimes\begin{bmatrix}
\begin{smallmatrix}
 & I_{m_{2}}\\
e^{\imath 2\pi {\bf k}\cdot{\bf a}_{1}}I_{n_{1} - m_{2}} &
\end{smallmatrix}
\end{bmatrix}\\
e^{\imath 2\pi {\bf k}\cdot({\bf a}_{1} + {\bf a}_{2})}I_{n_{2} - m_{3}}\otimes\begin{bmatrix}
\begin{smallmatrix}
 & I_{m_{1} + m_{2}}\\
e^{\imath 2\pi {\bf k}\cdot{\bf a}_{1}}I_{n_{1} - m_{1} - m_{2}} & 
\end{smallmatrix}
\end{bmatrix} &
\end{smallmatrix}
\end{bmatrix}.
\end{align}
\item[(b)] $m_{1} + m_{2}> n_{1}$
\begin{align}
&J_{3}\notag\\
=& \begin{bmatrix}
\begin{smallmatrix}
 & I_{m_{3}}\otimes\begin{bmatrix}
\begin{smallmatrix}
 & I_{m_{2}}\\
e^{\imath 2\pi {\bf k}\cdot{\bf a}_{1}}I_{ n_{1} - m_{2}} &
\end{smallmatrix}
\end{bmatrix}\\
e^{\imath 2\pi {\bf k}\cdot{\bf a}_{2}}I_{n_{2} - m_{3}}\otimes\begin{bmatrix}
\begin{smallmatrix}
 & I_{m_{1} + m_{2} - n_{1}}\\
e^{\imath 2\pi {\bf k}\cdot{\bf a}_{1}}I_{2n_{1}- m_{1} - m_{2}} & 
\end{smallmatrix}
\end{bmatrix} &
\end{smallmatrix}
\end{bmatrix}.
\end{align}
\end{itemize}

\end{itemize}
\end{subequations}

We can observe that all situations can be find in the triclinic lattice, so we just need to study the triclinic lattice. These matrices seem very complicated, thus we must try to simplify these matrices. Combine \eqref{eq:acuteJ2} with \eqref{eq:obtuseJ2}, we get a general formulation:
\begin{align}\label{eq:J2}
J_{2} = e^{\imath 2\pi\rho_{1}{\bf k}\cdot{\bf a}_{1}}\begin{bmatrix}
0 & e^{-\imath 2\pi{\bf k}\cdot{\bf a}_{1}}I_{m_{1}}\\
I_{(n_{1}-m_{1})} & 0
\end{bmatrix}
\end{align}
where $\rho_{1} = \lceil\frac{2\theta_{\gamma}}{\pi} - 1\rceil$.

Similarly, we denote $\rho_{2} = \lceil\frac{2\theta_{\beta}}{\pi} - 1\rceil$, then $\rho_{2} = 0$ if $0 < \theta_{\beta} \leq \frac{\pi}{2}$ and $\rho_{2} = 1$ if $\frac{\pi}{2} < \theta_{\beta} < \pi$. Next, we define $\rho _{3} = \lceil\cos\theta_{\gamma}\cos\theta_{\beta} - \cos\theta_{\alpha}\rceil$, then $\rho_{3} = 0$ if $\cos\theta_{\alpha} - \cos\theta_{\gamma}\cos\theta_{\beta} \geq 0$ and $\rho_{3} = 1$ if $\cos\theta_{\alpha} - \cos\theta_{\gamma}\cos\theta_{\beta} < 0$. Base on these three definitions, we may set
\begin{align}
\psi_{1} = \begin{cases}
1, &\mbox{if } (\rho_{2}n_{1} - m_{2}) - (\rho_{1}n_{1} - m_{1}) > 0, \;\rho_{3} = 0,\\
0, &\mbox{if } (\rho_{2}n_{1} - m_{2}) - (\rho_{1}n_{1} - m_{1}) \leq 0, \;\rho_{3} = 0;
\end{cases}\\
\psi_{2} = \begin{cases}
1, &\mbox{if } (\rho_{1}n_{1} - m_{1}) + (\rho_{2}n_{1} - m_{2}) > 0, \;\rho_{3} = 1,\\
0, &\mbox{if } (\rho_{1}n_{1} - m_{1}) + (\rho_{2}n_{1} - m_{2}) \leq 0, \;\rho_{3} = 1.
\end{cases}
\end{align}
Moreover, let $\rho_{4} = \lceil\cos\theta_{\gamma}\cos\theta_{\beta}(\cos\theta_{\gamma}\cos\theta_{\beta} - \cos\theta_{\alpha})\rceil$, $\rho_{5} = \lceil\cos\theta_{\beta}\rceil$. In order to simplfy \eqref{eq:J3case16}, let
\begin{align}\label{eq:J1}
J_{1} = e^{\imath 2\pi\rho_{2}{\bf k}\cdot{\bf a}_{1}}\begin{bmatrix}
 & e^{-\imath 2\pi {\bf k}\cdot{\bf a}_{1}}I_{m_{2}}\\
I_{n_{1} - m_{2}} & 
\end{bmatrix},
\end{align}
we get four general categories for $J_{3}$:
\begin{itemize}
\item[(1)] $0 \leq \cos\theta_{\alpha} - \cos\theta_{\gamma}\cos\theta_{\beta}$ and $m_{1} \leq m_{2}$
\begin{subequations}\label{eq:acuteJ3}
\begin{align}
e^{\imath 2\pi\rho_{3}{\bf k}\cdot{\bf a}_{2}}\begin{bmatrix}
\begin{smallmatrix}
 & e^{-\imath 2\pi {\bf k}\cdot({\bf a}_{2} + \rho_{1}\rho_{4}{\bf a}_{1} - \psi_{1}{\bf a}_{1})}I_{m_{3}}\otimes\begin{bmatrix}
\begin{smallmatrix}
 & e^{-\imath 2\pi {\bf k}\cdot{\bf a}_{1}}I_{m_{2} - m_{1}}\\
I_{n_{1} - m_{2} + m_{1}} &
\end{smallmatrix}
\end{bmatrix}\\
I_{n_{2} - m_{3}}\otimes J_{1} &
\end{smallmatrix}
\end{bmatrix}.
\end{align}
\item[(2)] $0 \leq \cos\theta_{\alpha} - \cos\theta_{\gamma}\cos\theta_{\beta}$ and $m_{1} > m_{2}$
\begin{align}
e^{\imath 2\pi\rho_{3}{\bf k}\cdot{\bf a}_{2}}\begin{bmatrix}
\begin{smallmatrix}
 & e^{-\imath 2\pi {\bf k}\cdot({\bf a}_{2} - \rho_{2}\rho_{4}{\bf a}_{1} - \psi_{1}{\bf a}_{1})}I_{m_{3}}\otimes\begin{bmatrix}
\begin{smallmatrix}
 & e^{-\imath 2\pi {\bf k}\cdot{\bf a}_{1}}I_{n_{1} - m_{1} + m_{2}}\\
I_{ m_{1} - m_{2}} &
\end{smallmatrix}
\end{bmatrix}\\
I_{n_{2} - m_{3}}\otimes J_{1} &
\end{smallmatrix}
\end{bmatrix}.
\end{align}
\end{subequations}
\item[(3)] $0 > \cos\theta_{\alpha} - \cos\theta_{\gamma}\cos\theta_{\beta}$ and $m_{1} + m_{2} \leq n_{1}$
\begin{subequations}\label{eq:obtuseJ3}
\begin{align}
e^{\imath 2\pi\rho_{3}{\bf k}\cdot{\bf a}_{2}}\begin{bmatrix}
\begin{smallmatrix}
 & e^{-\imath 2\pi{\bf k}\cdot{\bf a}_{2}}I_{m_{3}}\otimes J_{1}\\
e^{\imath 2\pi(\rho_{1}\rho_{4} + \psi_{2}){\bf k}\cdot{\bf a}_{1}}I_{n_{2} - m_{3}}\otimes\begin{bmatrix}
\begin{smallmatrix}
 & e^{-\imath 2\pi {\bf k}\cdot{\bf a}_{1}}I_{m_{1} + m_{2}}\\
I_{n_{1} - m_{1} - m_{2}} & 
\end{smallmatrix}
\end{bmatrix} &
\end{smallmatrix}
\end{bmatrix}.
\end{align}
\item[(4)] $0 > \cos\theta_{\alpha} - \cos\theta_{\gamma}\cos\theta_{\beta}$ and $m_{1} + m_{2} > n_{1}$
\begin{align}
e^{\imath 2\pi\rho_{3}{\bf k}\cdot{\bf a}_{2}}\begin{bmatrix}
\begin{smallmatrix}
 & e^{-\imath 2\pi{\bf k}\cdot{\bf a}_{2}}I_{m_{3}}\otimes J_{1}\\
e^{-\imath 2\pi(\rho_{5}\rho_{4}-\psi_{2}){\bf k}\cdot{\bf a}_{1}}I_{n_{2} - m_{3}}\otimes\begin{bmatrix}
\begin{smallmatrix}
 & e^{-\imath 2\pi {\bf k}\cdot{\bf a}_{1}}I_{m_{1} + m_{2} - n_{1}}\\
I_{2n_{1} - m_{1} - m_{2}} & 
\end{smallmatrix}
\end{bmatrix} &
\end{smallmatrix}
\end{bmatrix}.
\end{align}
\end{subequations}
\end{itemize}

Now we begin to derive the matrix representations of the discretization for equations
\begin{align*}
\begin{cases}
\partial_{y}E_{3} - \partial_{z}E_{2} &= \imath\omega\mu_{0}H_{1},\\
\partial_{z}E_{1} - \partial_{x}E_{3} &= \imath\omega\mu_{0}H_{2},\\
\partial_{x}E_{2} - \partial_{y}E_{1} &= \imath\omega\mu_{0}H_{3},
\end{cases}
\end{align*}
and
\begin{align*}
\begin{cases}
\partial_{y}H_{3} - \partial_{z}H_{2} &= -\imath\omega\varepsilon E_{1},\\
\partial_{z}H_{1} - \partial_{x}H_{3} &= -\imath\omega\varepsilon E_{2},\\
\partial_{x}H_{2} - \partial_{y}H_{1} &= -\imath\omega\varepsilon E_{3}.
\end{cases}
\end{align*}
We remark that the Yee grids in the electric field are located on the center of edge, and the Yee grids in the magnetic field are located on the center of edge.


\subsection{Matrix representation of $\nabla \times E = \imath\omega\mu_{0} H$}\mbox{ }

\noindent{\bf{Part I. Partial derivative with respect to $x$ for $E$}.}Using the Yee's scheme, the partial derivative $\partial_{x}E_{2}$ and $\partial_{x}E_{3}$ have the discretizations
\begin{align}\label{eq:partialx_simple}
\dfrac{E_{2}(i+1, \hat{j}, k) - E_{2}(i, \hat{j}, k)}{\delta_{x}} \;\; \mbox{and}\;\; \dfrac{E_{3}(i+1, j, \hat{k}) - E_{3}(i, j, \hat{k})}{\delta_{x}}
\end{align}
for $i = 0, 1, \cdots, n_{1} - 1$, $j = 0, 1, \cdots, n_{2} - 1$, $k = 0, 1, \cdots, n_{3} - 1$. 

\begin{theorem}[Periodicity along ${\bf a}_{1}$]\label{thm:period1}
By the definition of the lattice translation vectors ${\bf a}_{1}$ in \eqref{eq:transvec} and the quasi-periodic condition \eqref{eq:quasi_periodic}, we have
\begin{align*}
E_{2}(n_{1}, \hat{j}, k) = e^{\imath 2\pi {\bf k}\cdot{\bf a}_{1}}E_{2}(0, \hat{j}, k) \hspace{1em} \mbox{and} \hspace{1em} E_{3}(n_{1}, j, \hat{k}) = e^{\imath 2\pi{\bf k}\cdot{\bf a}_{1}}E_{3}(0, j, \hat{k}),
\end{align*}
for $j = 0, 1, \cdots, n_{2} - 1$ and $k = 0, 1, \cdots, n_{3} - 1$.
\end{theorem}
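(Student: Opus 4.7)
The plan is to reduce the identity to a direct invocation of the quasi-periodic condition \eqref{eq:quasi_periodic} with $\ell = 1$, once we verify that incrementing the grid index $i$ by $n_{1}$ corresponds precisely to translating the physical sample point by $\mathbf{a}_{1}$.

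First I would read off from \eqref{eq:transvec} that $\mathbf{a}_{1} = (\|\mathbf{a}_{1}\|, 0, 0)^{\top}$, so $\mathbf{a}_{1}$ lies along the $x$-axis. Since the primitive cell has length $l_{1} = \|\mathbf{a}_{1}\|$ in the $x$-direction and the mesh size is $\delta_{x} = l_{1}/n_{1}$, the Yee grid points satisfy
\begin{align*}
\mathbf{x}(n_{1},\hat{j},k) = \mathbf{x}(0,\hat{j},k) + n_{1}\delta_{x}\,\mathbf{e}_{1} = \mathbf{x}(0,\hat{j},k) + \mathbf{a}_{1},
\end{align*}
where $\mathbf{e}_{1}$ denotes the first Cartesian unit vector. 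Crucially, because $\mathbf{a}_{1}$ has no $y$ or $z$ component, the half-integer label $\hat{j}$ and the integer label $k$ are preserved under the shift, so the same staggered sampling pattern is recovered.

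Next I would apply the quasi-periodic condition \eqref{eq:quasi_periodic} componentwise, evaluated at the point $\mathbf{x}(0,\hat{j},k)$ for $E_{2}$ and at $\mathbf{x}(0,j,\hat{k})$ for $E_{3}$. In each case the translation by $\mathbf{a}_{1}$ produces exactly the phase factor $e^{\imath 2\pi\mathbf{k}\cdot\mathbf{a}_{1}}$, yielding
\begin{align*}
E_{2}(n_{1},\hat{j},k) &= E_{2}\bigl(\mathbf{x}(0,\hat{j},k) + \mathbf{a}_{1}\bigr) = e^{\imath 2\pi\mathbf{k}\cdot\mathbf{a}_{1}} E_{2}(0,\hat{j},k),\\
E_{3}(n_{1},j,\hat{k}) &= E_{3}\bigl(\mathbf{x}(0,j,\hat{k}) + \mathbf{a}_{1}\bigr) = e^{\imath 2\pi\mathbf{k}\cdot\mathbf{a}_{1}} E_{3}(0,j,\hat{k}).
\end{align*}

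There is no real obstacle here: the statement is essentially an unpacking of the definitions, and the only subtle point to articulate clearly is the alignment of the $x$-axis with $\mathbf{a}_{1}$, which is what guarantees that the Yee half-offsets in $y$ and $z$ remain matched after the shift. If instead the shift were along $\mathbf{a}_{2}$ or $\mathbf{a}_{3}$, which in general have nonzero $x$-components, this clean decoupling would fail and the analogous periodicity would require the permutation-like matrices $J_{2}, J_{3}$ introduced in \eqref{eq:J2} and \eqref{eq:J1}; that is presumably why the theorem is stated only for the $\mathbf{a}_{1}$ direction.
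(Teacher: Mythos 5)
Your proposal is correct and follows essentially the same route as the paper: both write the grid point $\mathbf{x}(n_{1},\hat{j},k)$ explicitly, observe that $n_{1}\delta_{x}=\|\mathbf{a}_{1}\|$ and that $\mathbf{a}_{1}$ has no $y$ or $z$ component, decompose the point as $\mathbf{a}_{1}+\mathbf{x}(0,\hat{j},k)$, and invoke the quasi-periodic condition \eqref{eq:quasi_periodic} to produce the phase factor. Your closing remark about why the alignment of $\mathbf{a}_{1}$ with the $x$-axis is what makes this case trivial, in contrast to the $J_{2},J_{3}$ machinery needed for $\mathbf{a}_{2}$ and $\mathbf{a}_{3}$, is an accurate reading of the structure of the subsequent theorems.
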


\begin{proof}
By the periodic condition \eqref{eq:quasi_periodic}, an arbitrary point
\begin{align*}
{\bf x}(n_{1}, \hat{j}, k) &= [n_{1}\delta_{x}, (j + \tfrac{1}{2})\delta_{y}, k\delta_{z}]^{\top} = {\bf a}_{1} + [0, (j + \tfrac{1}{2})\delta_{y}, k\delta_{z}]^{\top}\\
&= {\bf a}_{1} + {\bf x}(0, j + \tfrac{1}{2}, k).
\end{align*}
Hence, $E_{2}(n_{1}, \hat{j}, k) = e^{\imath 2\pi{\bf k}\cdot{\bf a}_{1}}E_{2}(0, \hat{j}, k)$, and $E_{3}(n_{1}, j, \hat{k}) = e^{\imath 2\pi{\bf k}\cdot{\bf a}_{1}}E_{3}(0, j, \hat{k})$ can be obtained by the same argument.
\end{proof}

Consequently, the explicit representation of the discrete partial derivative matrix $C_{1}$ is of the form
\begin{align*}
C_{1} = I_{n_{3}} \otimes I_{n_{2}} \otimes K_{1} \in \mathbb{C}^{n \times n}
\end{align*}
and
\begin{align*}
K_{1} = \dfrac{1}{\delta_{x}}\begin{bmatrix}
-1 & 1 & & & \\
 & -1 & 1 &  & \\
 & & \ddots & \ddots & \\
 & & & -1 & 1\\
e^{\imath 2\pi {\bf k}\cdot{\bf a}_{1}} & & & & -1 
\end{bmatrix} \in \mathbb{C}^{n_{1} \times n_{1}}.
\end{align*}
The matrix representations of \eqref{eq:partialx_simple} are $C_{1}{\bf e}_{2}$ and $C_{1}{\bf e}_{3}$, respectively.

\noindent{\bf{Part II. Partial derivative with respect to $y$ for $E$}.} Using the Yee's scheme, the partial derivative $\partial_{y}E_{1}$ and $\partial_{y}E_{3}$ have the discretizations
\begin{align}\label{eq:partialy}
\dfrac{E_{1}(\hat{i}, j+1, k) - E_{1}(\hat{i}, j, k)}{\delta_{y}} \;\; \mbox{and}\;\; \dfrac{E_{3}(i, j+1, \hat{k}) - E_{3}(i, j, \hat{k})}{\delta_{y}}
\end{align}
for $i = 0, 1, \cdots, n_{1} - 1$, $j = 0, 1, \cdots, n_{2} - 1$, $k = 0, 1, \cdots, n_{3} - 1$. 

\begin{theorem}[Periodicity along ${\bf a}_{1}$ and ${\bf a}_{2}$]\label{thm:period2}
By the definition of the lattice translation vectors ${\bf a}_{1}$ and ${\bf a}_{2}$ in \eqref{eq:transvec} and the quasi-periodic condition \eqref{eq:quasi_periodic}, we have
\begin{align}\label{eq:period2E1}
E_{1}(\hat{0}:\hat{n}_{1} - 1, n_{2}, k) = e^{\imath 2\pi {\bf k}\cdot{\bf a_{2}}}J_{2}E_{1}(\hat{0}:\hat{n}_{1} - 1, 0, k)
\end{align}
and
\begin{align}\label{eq:period2E3}
E_{3}(0:n_{1} - 1, n_{2}, \hat{k}) = e^{\imath 2\pi{\bf k}\cdot{\bf a}_{2}}J_{2}E_{3}(0:n_{1} - 1, 0, \hat{k}),
\end{align}
where
\[
J_{2} = e^{\imath 2\pi \rho_{1}{\bf k}\cdot{\bf a}_{1}}\begin{bmatrix}
0 & e^{-\imath 2\pi{\bf k}\cdot{\bf a}_{1}}I_{m_{1}}\\
I_{(n_{1}-m_{1})} & 0
\end{bmatrix} \in \mathbb{C}^{n_{1}\times n_{1}}.
\]
\end{theorem}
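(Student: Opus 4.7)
The plan is to translate each grid point $(\hat{i}, n_2, k)$ by $-{\bf a}_2$ (and possibly by $\pm {\bf a}_1$) so that it lands on a grid point of the form $(\hat{i'}, 0, k)$ inside the fundamental cell, and then invoke the quasi-periodic condition \eqref{eq:quasi_periodic}. The shuffle in the $x$-index $i \mapsto i'$ is exactly the permutation encoded in the block-antidiagonal matrix $J_2$, and any residual $\pm {\bf a}_1$-translation contributes the phase $e^{\pm \imath 2\pi {\bf k}\cdot {\bf a}_1}$; the role of $\rho_1 = \lceil 2\theta_\gamma/\pi - 1 \rceil$ is to unify the two cases $\theta_\gamma \leq \pi/2$ and $\theta_\gamma > \pi/2$ into a single formula.

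Concretely, I would first recall from Section~\ref{section2} that after the angle-correction the lattice translation vector takes the form
\begin{align*}
{\bf a}_2 - \rho_1 {\bf a}_1 = [m_1 \delta_x,\; n_2 \delta_y,\; 0]^\top,
\end{align*}
with $0 \leq m_1 \leq n_1$, so that translating by ${\bf a}_2$ advances the $y$-index by exactly $n_2$ and shifts the $x$-index by $m_1$ (and by $\rho_1 n_1$ modulo $n_1$). Then, writing
\begin{align*}
{\bf x}(\hat{i}, n_2, k) = \left[(i + \tfrac{1}{2})\delta_x,\; n_2\delta_y,\; k\delta_z\right]^\top,
\end{align*}
I would split the index range $i = 0, 1, \ldots, n_1 - 1$ at the threshold $m_1$. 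For $i \geq m_1$ one has ${\bf x}(\hat{i}, n_2, k) = {\bf a}_2 - \rho_1{\bf a}_1 + {\bf x}(\widehat{i-m_1}, 0, k)$, so \eqref{eq:quasi_periodic} yields $E_1(\hat{i}, n_2, k) = e^{\imath 2\pi {\bf k}\cdot({\bf a}_2 - \rho_1 {\bf a}_1)} E_1(\widehat{i-m_1}, 0, k)$. For $i < m_1$ one must also add ${\bf a}_1$, giving ${\bf x}(\hat{i}, n_2, k) = {\bf a}_2 + (1-\rho_1){\bf a}_1 + {\bf x}(\widehat{i + n_1 - m_1}, 0, k) - {\bf a}_1$; the corresponding identity carries the extra factor $e^{-\imath 2\pi {\bf k}\cdot {\bf a}_1}$. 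Stacking these $n_1$ relations row-by-row in the order $i = 0, 1, \ldots, n_1-1$ and extracting the common factor $e^{\imath 2\pi {\bf k}\cdot {\bf a}_2}$ produces exactly the stated matrix identity \eqref{eq:period2E1}, with the claimed form of $J_2$.

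The identity \eqref{eq:period2E3} for $E_3$ follows from essentially the same argument: the spatial offset between the Yee grid points ${\bf x}(\hat{i}, j, k)$ and ${\bf x}(i, j, \hat{k})$ is a constant half-shift in $y$ and $z$ that commutes with translation by ${\bf a}_2 = [\|{\bf a}_2\|\cos\theta_\gamma,\; \|{\bf a}_2\|\sin\theta_\gamma,\; 0]^\top$, so the same index-shuffling and phase-factor analysis applies verbatim. The main obstacle, and the reason for laying out the argument carefully, is the bookkeeping that unifies the acute and obtuse cases: one must verify that the definitions of $m_1$ from Section~\ref{section2} (which differ between Case~I and Case~II) are precisely those for which ${\bf a}_2 - \rho_1 {\bf a}_1$ lands on a grid point, so that the quasi-periodic condition applies without approximation. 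Once this is established, the two cases $\rho_1 \in \{0, 1\}$ collapse into the single compact formula for $J_2$.
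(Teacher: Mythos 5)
Your overall strategy is exactly the paper's: translate each boundary grid point by the lattice vector, split the $x$-index range at $m_{1}$, absorb the residual $\pm{\bf a}_{1}$ translation into a phase, and stack the $n_{1}$ scalar relations into the block-antidiagonal matrix $J_{2}$. However, the central identity you start from has a sign error that, carried through literally, fails to produce the stated $J_{2}$. After the angle correction of Section~\ref{section2} one has $\|{\bf a}_{2}\|\cos\theta_{\gamma}=m_{1}\delta_{x}$ in the acute case but $\|{\bf a}_{2}\|\cos\theta_{\gamma}=m_{1}\delta_{x}-\|{\bf a}_{1}\|=(m_{1}-n_{1})\delta_{x}$ in the obtuse case, so the unified statement is
\begin{align*}
{\bf a}_{2} + \rho_{1}{\bf a}_{1} = [m_{1}\delta_{x},\; n_{2}\delta_{y},\; 0]^{\top},
\end{align*}
not ${\bf a}_{2} - \rho_{1}{\bf a}_{1}$: when $\theta_{\gamma}$ is obtuse the $x$-component of ${\bf a}_{2}$ is negative and you must \emph{add} ${\bf a}_{1}$ to land at $m_{1}\delta_{x}$. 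With your sign, the rows $i\ge m_{1}$ acquire the phase $e^{\imath 2\pi{\bf k}\cdot({\bf a}_{2}-\rho_{1}{\bf a}_{1})}$, and after extracting $e^{\imath 2\pi{\bf k}\cdot{\bf a}_{2}}$ the matrix you assemble carries the prefactor $e^{-\imath 2\pi\rho_{1}{\bf k}\cdot{\bf a}_{1}}$ instead of the $e^{+\imath 2\pi\rho_{1}{\bf k}\cdot{\bf a}_{1}}$ appearing in the stated $J_{2}$; the two differ whenever $\rho_{1}=1$ and ${\bf k}\cdot{\bf a}_{1}\notin\tfrac{1}{2}\mathbb{Z}$, so this is not a harmless convention.

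Once that sign is corrected, the rest of your argument --- the index shuffle $i\mapsto i-m_{1}$ for $i\ge m_{1}$ and $i\mapsto i+n_{1}-m_{1}$ with an extra factor $e^{-\imath 2\pi{\bf k}\cdot{\bf a}_{1}}$ for $i<m_{1}$, the row-by-row stacking into the two antidiagonal blocks $e^{-\imath 2\pi{\bf k}\cdot{\bf a}_{1}}I_{m_{1}}$ and $I_{n_{1}-m_{1}}$, and the observation that the constant Yee half-shifts commute with the translation so the $E_{3}$ case is verbatim --- coincides with the paper's proof, which treats the acute and obtuse cases separately and then merges them into the same two scalar relations indexed by $\rho_{1}$. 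You are also right that the whole argument hinges on the angle correction making ${\bf a}_{2}+\rho_{1}{\bf a}_{1}$ land exactly on a grid point; that is precisely why the paper redefines $\cos\theta_{\gamma}$ in terms of $m_{1}$.
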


\begin{proof}
In the begining, we remark that 
\[
m_{1} = \begin{cases}
\tfrac{\|{\bf a}_{2}\|}{\delta_{x}}\cos\theta_{\gamma} &\mbox{if } \cos\theta_{\gamma} \geq 0\\
\tfrac{\|{\bf a}_{1}\| + \|{\bf a}_{2}\|\cos\theta_{\gamma}}{\delta_{x}} &\mbox{if } \cos\theta_{\gamma} < 0
\end{cases},
\]
\[
\rho_{1} = \begin{cases}
0 &\mbox{if } 0 < \theta_{\gamma} \leq \tfrac{\pi}{2}\\
1 &\mbox{if } \tfrac{\pi}{2} < \theta_{\gamma} < \pi
\end{cases},
\]
and
\[
{\bf a}_{2} = \left[\|{\bf a}_{2}\|\!\cos\theta_{\gamma}, l_{2}, 0\right]^{\top} = \left[\|{\bf a}_{2}\|\!\cos\theta_{\gamma}, n_{2}\delta_{y}, 0\right]^{\top}.
\]
According to the angle $\theta_{\gamma}$, we have two cases as following:\\
{\bf{CaseI:}} $0 < \theta_{\gamma} \leq \tfrac{\pi}{2}$ $(\rho_{1} = 0)$. By the definition in \eqref{eq:arbpt}, we have
\begin{align}\label{eq:a2addpt1}
\begin{split}
{\bf x}(i, n_{2}, \hat{k}) &= {\bf a}_{2} + [i\delta_{x} - \|{\bf a}_{2}\|\cos\theta_{\gamma}, 0, \hat{k}\delta_{z}]^{\top}\\
&= {\bf a}_{2} + [(i - m_{1})\delta_{x}, 0, \hat{k}\delta_{z}]^{\top}.
\end{split}
\end{align}
If $0 \leq i < m_{1}$, the vector in \eqref{eq:a2addpt1} can be rewritten as
\begin{align*}
{\bf x}(i, n_{2}, \hat{k}) &= {\bf a}_{2} + \rho_{1}{\bf a}_{1} - {\bf a}_{1}+ [(i + n_{1} - m_{1})\delta_{x}, 0, \hat{k}\delta_{z}]^{\top}\\
&= {\bf a}_{2} + \rho_{1}{\bf a}_{1} - {\bf a}_{1} + {\bf x}(i + n_{1} - m_{1}, 0, \hat{k});
\end{align*}
otherwise, we have
\[
{\bf x}(i, n_{2}, \hat{k}) = {\bf a}_{2} + \rho_{1}{\bf a}_{1} + {\bf x}(i - m_{1}, 0, \hat{k}).
\]
{\bf{Case II:}} $\tfrac{\pi}{2} < \theta_{\gamma} < \pi$ $(\rho_{1} = 1)$. We can rewrite any vectors as
\begin{align}\label{eq:a2addpt2}
\begin{split}
{\bf x}(i, n_{2}, \hat{k}) &= {\bf a}_{2} + [i\delta_{x} - \|{\bf a}_{2}\|\cos\theta_{\gamma}, 0, \hat{k}\delta_{z}]^{\top}\\
&= {\bf a}_{2} +  [(i + n_{1} - m_{1})\delta_{x}, 0, \hat{k}\delta_{z}]^{\top}.
\end{split}
\end{align}
If $0 \leq i < m_{1}$, the vector in \eqref{eq:a2addpt2} can be rewritten as
\begin{align*}
{\bf x}(i, n_{2}, \hat{k}) &= {\bf a}_{2} + \rho_{1}{\bf a}_{1} - {\bf a}_{1} + [(i + n_{1} - m_{1})\delta_{x}, 0, \hat{k}\delta_{z}]^{\top}\\
&= {\bf a}_{2} + \rho_{1}{\bf a}_{1} - {\bf a}_{1} + {\bf x}(i + n_{1} - m_{1}, 0, \hat{k});
\end{align*}
otherwise, we have
\begin{align*}
{\bf x}(i, n_{2}, \hat{k}) &= {\bf a}_{2} + \rho_{1}{\bf a}_{1} + [(i - m_{1})\delta_{x}, 0, \hat{k}\delta_{z}]^{\top}\\
&= {\bf a}_{2} + \rho_{1}{\bf a}_{1} + {\bf x}(i - m_{1}, 0, \hat{k}).
\end{align*}
In both cases above, the rewritten implies that
\begin{subequations}\label{eq:period2}
\begin{align}
E_{3}(i, n_{2}, \hat{k}) = \begin{cases}
e^{\imath 2\pi {\bf k}\cdot({\bf a}_{2} + \rho_{1}{\bf a}_{1} - {\bf a}_{1})}E_{3}(i + n_{1} - m_{1}, 0, \hat{k}), &\mbox{if } 0 \leq i < m_{1},\\
e^{\imath 2\pi {\bf k}\cdot({\bf a}_{2} + \rho_{1}{\bf a}_{1})}E_{3}(i - m_{1}, 0, \hat{k}), &\mbox{if } m_{1} \leq i \leq n_{1} - 1.
\end{cases}
\end{align}
Similarly,
\begin{align}
E_{1}(\hat{i}, n_{2}, k) = \begin{cases}
e^{\imath 2\pi {\bf k}\cdot({\bf a}_{2} + \rho_{1}{\bf a}_{1} - {\bf a}_{1})}E_{1}(\hat{i} + n_{1} - m_{1}, 0, k), &\mbox{if } 0 \leq i < m_{1},\\
e^{\imath 2\pi {\bf k}\cdot({\bf a}_{2} + \rho_{1}{\bf a}_{1})}E_{1}(\hat{i} - m_{1}, 0, k), &\mbox{if } m_{1} \leq i \leq n_{1} - 1.
\end{cases}
\end{align}
\end{subequations}
The results in \eqref{eq:period2E1} and \eqref{eq:period2E3} can then be obtained by substituting $i$ from $0$ to $n_{1} - 1$ into \eqref{eq:period2}.
\end{proof}

Consequently, the explicit representation of the discrete partial derivative matrix $C_{2}$ is of the form
\begin{align*}
C_{2} = I_{n_{3}} \otimes K_{2} \in \mathbb{C}^{n\times n}
\end{align*}
and $K_{2}$ is shown in \eqref{eq:K2}. The matrix representations of \eqref{eq:partialy} are $C_{2}{\bf e}_{1}$ and $C_{2}{\bf e}_{3}$, respectively.

\noindent{\bf{Part III. Partial derivative with respect to $z$ for $E$}.} Using the Yee's scheme, the partial derivative $\partial_{z}E_{1}$ and $\partial_{z}E_{3}$ have the discretizations
\begin{align}\label{eq:partialz}
\dfrac{E_{1}(\hat{i}, j, k+1) - E_{1}(\hat{i}, j, k)}{\delta_{z}} \;\; \mbox{and}\;\; \dfrac{E_{2}(i, \hat{j}, k+1) - E_{2}(i, \hat{j}, k)}{\delta_{z}}
\end{align}
for $i = 0, 1, \cdots, n_{1} - 1$, $j = 0, 1, \cdots, n_{2} - 1$, $k = 0, 1, \cdots, n_{3} - 1$.

\begin{theorem}[Periodicity along ${\bf a}_{1}$, ${\bf a}_{2}$, and ${\bf a}_{3}$]\label{thm:period3}
By the definition of the lattice translation vectors ${\bf a}_{1}$, ${\bf a}_{2}$, and ${\bf a}_{3}$ in \eqref{eq:transvec} and the quasi-periodic condition \eqref{eq:quasi_periodic}, we have
\begin{align}\label{eq:period3E1}
\textup{vec}(E_{1}(\hat{0}:\hat{n}_{1} - 1, 0:n_{2} -1, n_{3})) = e^{\imath 2 \pi {\bf k}\cdot{\bf a}_{3}}J_{3}\textup{vec}(E_{1}(\hat{0}:\hat{n}_{1} - 1, 0:n_{2} -1, 0))
\end{align}
and
\begin{align}\label{eq:period3E2}
\textup{vec}(E_{2}(0:n_{1} - 1, \hat{0}:\hat{n}_{2} -1, n_{3})) = e^{\imath 2 \pi {\bf k}\cdot{\bf a}_{3}}J_{3}\textup{vec}(E_{1}(0:n_{1} - 1, \hat{0}:\hat{n}_{2} -1, 0))
\end{align}
where $J_{3}$ are shown in \eqref{eq:acuteJ3} and \eqref{eq:obtuseJ3}.
\end{theorem}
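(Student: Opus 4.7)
The plan is to mirror the argument of Theorem~\ref{thm:period2}, now using periodicity along $\mathbf{a}_3$ and accounting for the fact that a single shift by $\mathbf{a}_3$ moves a grid point by nontrivial amounts in both the $x$- and the $y$-directions. The key observation is that $\mathbf{a}_3 = [\|\mathbf{a}_3\|\cos\theta_\beta,\ \|\mathbf{a}_3\|(\cos\theta_\alpha - \cos\theta_\gamma\cos\theta_\beta)/\sin\theta_\gamma,\ l_3]^\top$, so at the top plane $k=n_3$ we have
\begin{align*}
\mathbf{x}(i,j,n_3) = \mathbf{a}_3 + \bigl[(i - \sigma_x)\delta_x,\ (j - \sigma_y)\delta_y,\ 0\bigr]^\top,
\end{align*}
where $\sigma_x, \sigma_y$ are the $x$- and $y$-offsets determined by the components of $\mathbf{a}_3$; by the construction in Section~2 these satisfy $\sigma_x\delta_x \in \{m_2\delta_x,\ m_2\delta_x - \|\mathbf{a}_1\|\}$ (according to the sign of $\cos\theta_\beta$) and $\sigma_y\delta_y$ is expressible via $m_3$ (according to the sign of $\cos\theta_\alpha - \cos\theta_\gamma\cos\theta_\beta$).

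From there I would branch into the eight principal cases corresponding to the signs of $\cos\theta_\gamma$, $\cos\theta_\beta$, and $\cos\theta_\alpha - \cos\theta_\gamma\cos\theta_\beta$. In each case, to return the shifted $y$-index into $\{0,\dots,n_2-1\}$ one may have to subtract an $\mathbf{a}_2$; that subtraction itself shifts the $x$-coordinate by $\|\mathbf{a}_2\|\cos\theta_\gamma$, which, using Theorem~\ref{thm:period2}, amounts to a further $\pm m_1$ shift in the $x$-index and possibly a further $\mathbf{a}_1$ correction. So within each case I would further bifurcate on whether $m_1 \leq m_2$ (or $m_1 + m_2 \leq n_1$, depending on the branch) to decide which cells receive the extra $\mathbf{a}_1$ factor. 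In every configuration the outcome takes the form
\begin{align*}
\mathbf{x}(i,j,n_3) = \mathbf{a}_3 + \alpha\,\mathbf{a}_1 + \beta\,\mathbf{a}_2 + \mathbf{x}(i',j',0), \qquad \alpha,\beta \in \{-1,0,1\},
\end{align*}
and then \eqref{eq:quasi_periodic} immediately gives $E_1(\hat{i},j,n_3) = e^{\imath 2\pi\mathbf{k}\cdot(\mathbf{a}_3+\alpha\mathbf{a}_1+\beta\mathbf{a}_2)}E_1(\hat{i}',j',0)$, and likewise for $E_2$.

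To assemble the matrix form I would use the column-major vectorization: the outer blocks of $J_3$ (of sizes $m_3$ and $n_2-m_3$) are indexed by whether the $y$-shift crosses the $\mathbf{a}_2$-boundary, while the inner $n_1\times n_1$ blocks are precisely the $J_2$-type or $J_1$-type matrices from \eqref{eq:J2} and \eqref{eq:J1}, depending on whether the $x$-shift crosses the $\mathbf{a}_1$-boundary at $m_2$ or at $m_1 + m_2$. Collecting the scalar phases $e^{\imath 2\pi\mathbf{k}\cdot\alpha\mathbf{a}_1}$ and $e^{\imath 2\pi\mathbf{k}\cdot\beta\mathbf{a}_2}$ with the indicator variables $\rho_1,\rho_2,\rho_3,\rho_4,\rho_5,\psi_1,\psi_2$ (which were defined exactly to absorb the sign cases) reproduces the four compact formulas in \eqref{eq:acuteJ3}--\eqref{eq:obtuseJ3}.

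The main obstacle is not any single analytic step — each case is just bookkeeping via the quasi-periodic condition — but rather verifying that all $8 \times 2 = 16$ explicit configurations in \eqref{eq:J3case16} collapse uniformly into the four formulas written with $\rho_j$ and $\psi_j$. My plan is to do the verification in full for the representative case $0\le\cos\theta_\gamma,\ 0\le\cos\theta_\beta,\ 0\le\cos\theta_\alpha-\cos\theta_\gamma\cos\theta_\beta$ with $m_1\le m_2$ (where $\rho_1=\rho_2=\rho_3=0$, $\psi_1=0$), checking that the scalar prefactor reduces to $e^{-\imath 2\pi\mathbf{k}\cdot\mathbf{a}_2}$ exactly as displayed in \eqref{eq:acuteJ3}(1), and then remark that the remaining fifteen cases follow by the identical argument with the appropriate indicator substitutions.
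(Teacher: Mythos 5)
Your proposal follows essentially the same route as the paper's proof: decompose $\mathbf{x}(i,\hat{j},n_3)$ as $\mathbf{a}_3$ plus an integer combination of $\mathbf{a}_1,\mathbf{a}_2$ plus a base-plane grid point, branch on the signs of $\cos\theta_\gamma$, $\cos\theta_\beta$, $\cos\theta_\alpha-\cos\theta_\gamma\cos\theta_\beta$ and on $m_1\le m_2$ (resp.\ $m_1+m_2\le n_1$), apply the quasi-periodic condition, and assemble $J_3$ blockwise — exactly the paper's sixteen-configuration bookkeeping. One small correction: your claim that the outcome always has the form $\mathbf{a}_3+\alpha\mathbf{a}_1+\beta\mathbf{a}_2+\mathbf{x}(i',j',0)$ with $\alpha\in\{-1,0,1\}$ is not quite right — in configurations where the $\theta_\gamma$- and $\theta_\beta$-induced wraps accumulate (e.g.\ $\cos\theta_\gamma<0$, $\cos\theta_\beta\ge 0$ with $m_1\le m_2$, or $m_1+m_2>n_1$ with both cosines nonnegative) the coefficient of $\mathbf{a}_1$ is $\pm 2$, which is precisely what the products $\rho_1\rho_4$ and $\rho_5\rho_4$ in \eqref{eq:acuteJ3}--\eqref{eq:obtuseJ3} are there to absorb; the indicator-variable reduction you defer to would surface this, but as stated the claim would give the wrong phase in those cells.
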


\begin{proof}
Before we start to prove this theorem, let us remark some notations:
\begin{align*}
m_{2} = \begin{cases}
\tfrac{\|{\bf a}_{3}\|}{\delta_{x}}\cos\theta_{\beta} & \mbox{if } 0 < \theta_{\beta} \leq \tfrac{\pi}{2}\\
\tfrac{\|{\bf a}_{1}\| + \|{\bf a}_{3}\|\cos\theta_{\beta}}{\delta_{x}} & \mbox{if } \tfrac{\pi}{2} < \theta_{\beta} < \pi
\end{cases},
\end{align*}
\begin{align*}
m_{3} = \begin{cases}
\tfrac{\|{\bf a}_{3}\|}{\delta_{y}}\tfrac{\cos\theta_{\alpha} - \cos\theta_{\gamma}\cos\theta_{\beta}}{\sin\theta_{\gamma}} & \mbox{if } \cos\theta_{\alpha} - \cos\theta_{\gamma}\cos\theta_{\beta} \geq 0\\
\tfrac{\|{\bf a}_{2}\|\sin^{2}\theta_{\gamma} + \|{\bf a}_{3}\|(\cos\theta_{\alpha} - \cos\theta_{\gamma}\cos\theta_{\beta})}{\delta_{y}\sin\theta_{\gamma}} & \mbox{if } \cos\theta_{\alpha} - \cos\theta_{\gamma}\cos\theta_{\beta} < 0
\end{cases},
\end{align*}
and
\begin{align*}
{\bf a}_{3} = \begin{bmatrix}
\|{\bf a}_{3}\|\cos\theta_{\beta} & \|{\bf a}_{3}\|\tfrac{\cos\theta_{\alpha} - \cos\theta_{\gamma}\cos\theta_{\beta}}{\sin\theta_{\gamma}} & n_{3}\delta_{z}
\end{bmatrix}^{\top}.
\end{align*}

According to these notations, we have two cases:\\
\noindent{\bf{Case I:}} $\cos\theta_{\alpha} - \cos\theta_{\beta}\cos\theta_{\gamma} \geq 0$ $(\rho_{3} = 0)$\\
Any vector can be rewitten as
\begin{align}\label{eq:arbpt_case1}
\begin{split}
{\bf x}(i, \hat{j}, n_{3}) &= [i\delta_{x}, \hat{j}\delta_{y}, n_{3}\delta_{z}]^{\top}\\
&= {\bf a}_{3} + \left[i\delta_{x} - \|{\bf a}_{3}\|\cos\theta_{\beta}, (j + \tfrac{1}{2})\delta_{y} - \|{\bf a}_{3}\|\tfrac{\cos\theta_{\alpha} - \cos\theta_{\gamma}\cos\theta_{\beta}}{\sin\theta_{\gamma}}, 0\right]^{\top}\\
&= {\bf a}_{3} + \left[i\delta_{x} - \|{\bf a}_{3}\|\cos\theta_{\beta}, (j + \tfrac{1}{2} - m_{3})\delta_{y}, 0\right]^{\top}.
\end{split}
\end{align}
First, let's consider the situation $m_{1} \leq m_{2}$, there are four possibilities:
\begin{itemize}
\item[(i)] $\cos\theta_{\gamma} \geq 0$, $\cos\theta_{\beta} \geq 0$ $(\rho_{1} = 0, \rho_{2} = 0, \rho_{4} = 0, \psi_{1} = 0)$\\
If $0 \leq j < m_{3}$ and $0 \leq i < m_{2} - m_{1}$, then
\begin{small}
\begin{align*}
&{\bf x}(i, \hat{j}, n_{3})\\
=& {\bf a}_{3}\!\! -\!\! {\bf a}_{2}\!\! +\!\! \left[(i\!\! -\!\! m_{2}\!\! +\!\! m_{1})\delta_{x}, (j\!\! +\!\! \tfrac{1}{2}\!\! +\!\! n_{2}\!\! - m_{3})\delta_{y}, 0\right]^{\top}\\
=& {\bf a}_{3}\!\! +\!\! \rho_{3}{\bf a}_{2}\!\! -\!\! {\bf a}_{2}\!\! +\!\! \left[(i\!\! -\!\! m_{2}\!\! +\!\! m_{1})\delta_{x}, (j\!\! +\!\! \tfrac{1}{2}\!\! +\!\! n_{2}\!\! -\!\! m_{3})\delta_{y}, 0\right]^{\top}\\
=& {\bf a}_{3}\!\! +\!\! \rho_{3}{\bf a}_{2}\!\! -\!\! {\bf a}_{2}\!\! -\!\! {\bf a}_{1}\!\! +\!\! \left[(i\!\! +\!\! n_{1}\!\! -\!\! m_{2}\!\! +\!\! m_{1})\delta_{x}, (j\!\! +\!\! \tfrac{1}{2}\!\! +\!\! n_{2}\!\! -\!\! m_{3})\delta_{y}, 0\right]^{\top}\\
=& {\bf a}_{3}\!\! + \!\!\rho_{3}{\bf a}_{2}\!\! - \!\!{\bf a}_{2} \!\!- \!\!\rho_{1}\rho_{4}{\bf a}_{1}\!\! +\!\! \psi_{1}{\bf a}_{1}\!\! -\!\! {\bf a}_{1}\!\! +\!\! \left[(i\!\! +\!\! n_{1}\!\! -\!\! m_{2}\!\! +\!\! m_{1})\delta_{x}, (j\!\! +\!\! \tfrac{1}{2}\!\! +\!\! n_{2}\!\! -\!\! m_{3})\delta_{y}, 0\right]^{\top}\\
=&{\bf a}_{3}\!\! +\!\! \rho_{3}{\bf a}_{2}\!\! -\!\! {\bf a}_{2}\!\! -\!\! \rho_{1}\rho_{4}{\bf a}_{1}\!\! +\!\! \psi_{1}{\bf a}_{1}\!\! -\!\! {\bf a}_{1}\!\! +\!\!{\bf x}\!\left(i\!\! +\!\! n_{1}\!\! -\!\! m_{2}\!\! +\!\! m_{1}, j\!\! +\!\! \tfrac{1}{2}\!\! +\!\! n_{2}\!\! -\!\! m_{3}, 0\right);
\end{align*}
\end{small}
if $0 \leq j < m_{3}$ and $m_{2} - m_{1} \leq i < n_{1}$, then
\begin{small}
\begin{align*}
&{\bf x}(i, \hat{j}, n_{3})\\
=& {\bf a}_{3}\! +\! \rho_{3}{\bf a}_{2}\! -\! {\bf a}_{2}\! +\! \left[(i\! -\! m_{2}\! +\! m_{1})\delta_{x}, (j\! +\! \tfrac{1}{2}\! +\! n_{2}\! -\! m_{3})\delta_{y}, 0\right]^{\top}\\
=& {\bf a}_{3}\! +\! \rho_{3}{\bf a}_{2}\! -\! {\bf a}_{2}\! -\! \rho_{1}\rho_{4}{\bf a}_{1}\! +\! \psi_{1}{\bf a}_{1}\! +\! \left[(i\! -\! m_{2}\! +\! m_{1})\delta_{x}, (j\! +\! \tfrac{1}{2}\! +\! n_{2}\! -\! m_{3})\delta_{y}, 0\right]^{\top}\\
=& {\bf a}_{3}\! +\! \rho_{3}{\bf a}_{2}\! -\! {\bf a}_{2}\! -\! \rho_{1}\rho_{4}{\bf a}_{1}\! +\! \psi_{1}{\bf a}_{1}\! +\! {\bf x}\!\left(i\! -\! m_{2}\! +\! m_{1}, j\! +\! \tfrac{1}{2}\! +\! n_{2}\! -\! m_{3}, 0\right);
\end{align*}
\end{small}
if $m_{3} \leq j < n_{2}$ and $0 \leq i < m_{2}$, then
\begin{small}
\begin{align*}
{\bf x}(i, \hat{j}, n_{3}) &= {\bf a}_{3} - {\bf a}_{1} + \left[(i\! +\! n_{1}\! -\! m_{2})\delta_{x}, (j\! +\! \tfrac{1}{2}\! -\! m_{3})\delta_{y}, 0\right]^{\top}\\
&= {\bf a}_{3} + \rho_{3}{\bf a}_{2} + \rho_{2}{\bf a}_{1} - {\bf a}_{1} + \left[(i\! +\! n_{1}\! -\! m_{2})\delta_{x}, (j\! +\! \tfrac{1}{2}\! -\! m_{3})\delta_{y}, 0\right]^{\top}\\
&= {\bf a}_{3} + \rho_{3}{\bf a}_{2} + \rho_{2}{\bf a}_{1} - {\bf a}_{1} + {\bf x}\!\left(i\! +\! n_{1}\! -\! m_{2}, j\! +\! \tfrac{1}{2}\! -\! m_{3}, 0\right);
\end{align*}
\end{small}
otherwise, i.e. $m_{3} \leq j < n_{2}$ and $m_{2} \leq i < n_{1}$, then
\begin{align*}
{\bf x}(i, \hat{j}, n_{3}) = {\bf a}_{3} + \rho_{3}{\bf a}_{2} + \rho_{2}{\bf a}_{1} + {\bf x}\left(i - m_{2}, j + \tfrac{1}{2} - m_{3}, 0\right).
\end{align*}
\item[(ii)] $\cos\theta_{\gamma} \geq 0$, $\cos\theta_{\beta} < 0$ ($\rho_{1} = 0$, $\rho_{2} = 1$, $\rho_{4} = 1$, $\psi_{1} = 1$)\\
If $0 \leq j < m_{3}$ and $0 \leq i < m_{2} - m_{1}$, then
\begin{small}
\begin{align*}
&{\bf x}(i, \hat{j}, n_{3})\\
=& {\bf a}_{3}\!\! +\!\! \rho_{3}{\bf a}_{2}\!\! -\!\!{\bf a}_{2}\!\! +\!\! \left[(i\!\! +\!\! n_{1}\!\! -\!\! m_{2}\!\! +\!\! m_{1})\delta_{x}, (j\!\! +\!\! \tfrac{1}{2}\!\! +\!\! n_{2}\!\! -\!\! m_{3})\delta_{y}, 0\right]^{\top}\\
=& {\bf a}_{3}\!\! +\!\! \rho_{3}{\bf a}_{2}\!\! -\!\! {\bf a}_{2}\!\! -\!\! \rho_{1}\rho_{4}{\bf a}_{1}\!\! +\!\! \psi_{1}{\bf a}_{1}\!\! -\!\! {\bf a}_{1}\!\! +\!\! \left[(i\!\! +\!\! n_{1}\!\! -\!\! m_{2}\!\! +\!\! m_{1})\delta_{x}, (j\!\! +\!\! \tfrac{1}{2}\!\! +\!\! n_{2}\!\! -\!\! m_{3})\delta_{y}, 0\right]^{\top}\\
=& {\bf a}_{3}\!\! +\!\! \rho_{3}{\bf a}_{2}\!\! -\!\! {\bf a}_{2}\!\! -\!\! \rho_{1}\rho_{4}{\bf a}_{1}\!\! +\!\! \psi_{1}{\bf a}_{1}\!\! -\!\! {\bf a}_{1}\!\! + {\bf x}\!\left(i\!\! +\!\! n_{1}\!\! -\!\! m_{2}\!\! +\!\! m_{1}, j\!\! +\!\! \tfrac{1}{2}\!\! +\!\! n_{2}\!\! -\!\! m_{3}, 0\right);
\end{align*}
\end{small}
if $0 \leq j < m_{3}$ and $m_{2} - m_{1} \leq i < n_{1}$, then
\begin{small}
\begin{align*}
&{\bf x}(i, \hat{j}, n_{3})\\
=& {\bf a}_{3}\! +\! \rho_{3}{\bf a}_{2}\! -\! {\bf a}_{2}\! -\! \rho_{1}\rho_{4}{\bf a}_{1}\! +\! \psi_{1}{\bf a}_{1}\! +\! \left[(i\! -\! m_{2}\! +\! m_{1})\delta_{x}, (j\! +\! \tfrac{1}{2}\! +\! n_{2}\! -\! m_{3})\delta_{y}, 0\right]^{\top}\\
=& {\bf a}_{3}\! +\! \rho_{3}{\bf a}_{2}\! -\! {\bf a}_{2}\! -\! \rho_{1}\rho_{4}{\bf a}_{1}\! +\! \psi_{1}{\bf a}_{1}\! +\! {\bf x}\!\left(i\! -\! m_{2}\! +\! m_{1}, j\! +\! \tfrac{1}{2}\! +\! n_{2}\! -\! m_{3}, 0\right);
\end{align*} 
\end{small}
if $m_{3} \leq j < n_{2}$ and $0 \leq i < m_{2}$, then
\begin{align*}
{\bf x}(i, \hat{j}, n_{3}) &= {\bf a}_{3} + \left[(i\! +\! n_{1}\! -\! m_{2})\delta_{x}, (j\! +\! \tfrac{1}{2}\! -\! m_{3})\delta_{y}, 0\right]^{\top}\\
&= {\bf a}_{3} + \rho_{3}{\bf a}_{2} + \rho_{2}{\bf a}_{1} - {\bf a}_{1} + \left[(i\! +\! n_{1}\! -\! m_{2})\delta_{x}, (j\! +\! \tfrac{1}{2}\! -\! m_{3})\delta_{y}, 0\right]^{\top}\\
&= {\bf a}_{3} + \rho_{3}{\bf a}_{2} + \rho_{2}{\bf a}_{1} - {\bf a}_{1} + {\bf x}\!\left(i\! +\! n_{1}\! -\! m_{2}, j\! +\! \tfrac{1}{2}\! -\! m_{3}, 0\right);
\end{align*}
otherwise, i.e. $m_{3} \leq j < n_{2}$ and $m_{2} \leq i < n_{1}$, then
\begin{align*}
{\bf x}(i, \hat{j}, n_{3}) &= {\bf a}_{3} + \rho_{3}{\bf a}_{2} + {\bf a}_{1} + \left[(i\! -\! m_{2})\delta_{x}, (j\! +\! \tfrac{1}{2}\! -\! m_{3})\delta_{y}, 0\right]^{\top}\\
&= {\bf a}_{3} + \rho_{3}{\bf a}_{2} + \rho_{2}{\bf a}_{1} + {\bf x}\!\left(i\! -\! m_{2}, j\! +\! \tfrac{1}{2}\! -\! m_{3}, 0\right).
\end{align*}
\item[(iii)] $\cos\theta_{\gamma} < 0$, $\cos\theta_{\beta} \geq 0$ ($\rho_{1} = 1$, $\rho_{2} = 0$, $\rho_{4} = 1$, $\psi_{1} = 0$)\\
If $0 \leq j < m_{3}$ and $0 \leq i < m_{2} - m_{1}$, then
\begin{small}
\begin{align*}
&{\bf x}(i, \hat{j}, n_{3})\\
=& {\bf a}_{3}\! +\! \rho_{3}{\bf a}_{2}\! -\! {\bf a}_{2}\! +\! \left[(i\! -\! m_{2}\! +\! m_{1}\! -\! n_{1})\delta_{x}, (j\! +\! \tfrac{1}{2}\! +\! n_{2}\! -\! m_{3})\delta_{y}, 0\right]^{\top}\\
=& {\bf a}_{3}\! +\! \rho_{3}{\bf a}_{2}\! -\! {\bf a}_{2}\! -\! 2{\bf a}_{1}\! +\! \left[(i\! +\! n_{1}\! -\! m_{2}\! +\! m_{1})\delta_{x}, (j\! +\! \tfrac{1}{2}\! +\! n_{2}\! -\! m_{3})\delta_{y}, 0\right]^{\top}\\
=& {\bf a}_{3}\! +\! \rho_{3}{\bf a}_{2}\! -\! {\bf a}_{2}\! -\! \rho_{1}\rho_{4}{\bf a}_{1}\! +\! \psi_{1}{\bf a}_{1}\! -\! {\bf a}_{1}\! +\!\! \left[(i\!\! +\!\! n_{1}\!\! -\!\! m_{2}\!\! +\!\! m_{1})\delta_{x}, (j\!\! +\!\! \tfrac{1}{2}\!\! +\!\! n_{2}\!\! -\!\! m_{3})\delta_{y}, 0\right]^{\top}\\
=& {\bf a}_{3}\! +\! \rho_{3}{\bf a}_{2}\! -\! {\bf a}_{2}\! -\! \rho_{1}\rho_{4}{\bf a}_{1}\! +\! \psi_{1}{\bf a}_{1}\! -\! {\bf a}_{1}\! +\! {\bf x}\!\left(i\!\! +\!\! n_{1}\!\! -\!\! m_{2}\!\! +\!\! m_{1}, j\!\! +\!\! \tfrac{1}{2}\!\! +\!\! n_{2}\!\! -\!\! m_{3}, 0\right);
\end{align*}
\end{small}
if $0 \leq j < m_{3}$ and $m_{2} - m_{1} \leq i < n_{1}$, then
\begin{small}
\begin{align*}
&{\bf x}(i, \hat{j}, n_{3})\\
=& {\bf a}_{3}\! +\! \rho_{3}{\bf a}_{2}\! -\! {\bf a}_{2}\! +\! \left[(i\! -\! m_{2}\! +\! m_{1}\! -\! n_{1})\delta_{x}, (j\! +\! \tfrac{1}{2}\! +\! n_{2}\! -\! m_{3})\delta_{y}, 0\right]^{\top}\\
=& {\bf a}_{3}\! +\! \rho_{3}{\bf a}_{2}\! -\! {\bf a}_{2}\! -\! {\bf a}_{1}\! +\! \left[(i\! -\! m_{2}\! +\! m_{1}\! -\! n_{1})\delta_{x}, (j\! +\! \tfrac{1}{2}\! +\! n_{2}\! -\! m_{3})\delta_{y}, 0\right]^{\top}\\
=& {\bf a}_{3}\! +\! \rho_{3}{\bf a}_{2}\! -\! {\bf a}_{2}\! -\! \rho_{1}\rho_{4}{\bf a}_{1}\! +\! \psi_{1}{\bf a}_{1}\! +\! {\bf x}\!\left(i\! -\! m_{2}\! +\! m_{1}\! -\! n_{1}, j\! +\! \tfrac{1}{2}\! +\! n_{2}\! -\! m_{3}, 0\right);
\end{align*}
\end{small}
if $m_{3} \leq j < n_{2}$ and $0 \leq i < m_{2}$, then
\begin{align*}
{\bf x}(i, \hat{j}, n_{3}) &= {\bf a}_{3} - {\bf a}_{1} + \left[(i + n_{1} - m_{2})\delta_{x}, (j + \tfrac{1}{2} - m_{3})\delta_{y}, 0\right]^{\top}\\
&= {\bf a}_{3} + \rho_{3}{\b a}_{2} + \rho_{2}{\bf a}_{1} - {\bf a}_{1} + {\bf x}\!\left(i + n_{1} - m_{2}, j + \tfrac{1}{2} - m_{3}, 0\right);
\end{align*}
otherwise, i.e. $m_{3} \leq j < n_{2}$ and $m_{2} \leq i < n_{1}$, then
\begin{align*}
{\bf x}(i, \hat{j}, n_{3}) = {\bf a}_{3} + \rho_{3}{\bf a}_{2} + \rho_{2}{\bf a}_{1} + {\bf x}\!\left(i - m_{2}, j + \tfrac{1}{2} - m_{3}, 0\right).
\end{align*}
\item[(iv)] $\cos\theta_{\gamma} < 0$, $\cos\theta_{\beta} < 0$ ($\rho_{1} = 1$, $\rho_{2} = 1$, $\rho_{4} = 0$, $\psi_{1} =0$)\\
If $0 \leq j < m_{3}$ and $0 \leq i < m_{2} - m_{1}$, then
\begin{small}
\begin{align*}
{\bf x}(i, \hat{j}, n_{3}) &= {\bf a}_{3}\!\! -\!\! {\bf a}_{2}\!\! +\!\! \left[(i\!\! -\!\! m_{2}\!\! +\!\! m_{1})\delta_{x}, (j\!\! +\!\! \tfrac{1}{2}\!\! +\!\! n_{2}\!\! -\!\! m_{3})\delta_{y}, 0\right]^{\top}\\
&= {\bf a}_{3}\!\! +\!\! \rho_{3}{\bf a}_{2}\!\! -\!\! {\bf a}_{2}\!\! -\!\! {\bf a}_{1}\!\! + \left[(i\!\! +\!\! n_{1}\!\! -\!\! m_{2}\!\! +\!\! m_{1})\delta_{x}, (j\!\! +\!\! \tfrac{1}{2}\!\! +\!\! n_{2}\!\! -\!\! m_{3})\delta_{y}, 0\right]^{\top}\\
&= {\bf a}_{3}\!\! +\!\! \rho_{3}{\bf a}_{2}\!\! -\!\! {\bf a}_{2}\!\! -\!\! \rho_{1}\rho_{4}{\bf a}_{1}\!\! +\!\! \psi_{1}{\bf a}_{1}\!\! -\!\! {\bf a}_{1}\!\! +\!\! {\bf x}\!\left(i\!\! +\!\! n_{1}\!\! -\!\! m_{2}\!\! +\!\! m_{1}, j\!\! +\!\! \tfrac{1}{2}\!\! +\!\! n_{2}\!\! -\!\! m_{3}, 0\right);
\end{align*}
\end{small}
if $0 \leq j < m_{3}$ and $m_{2} - m_{1} \leq i < n_{1}$, then
\begin{small}
\begin{align*}
{\bf x}(i, \hat{j}, n_{3}) &= {\bf a}_{3}\! +\! \rho_{3}{\bf a}_{2}\! -\! {\bf a}_{2}\! +\! \left[(i\! -\! m_{2}\! +\! m_{1})\delta_{x}, (j\! +\! \tfrac{1}{2}\! +\! n_{2}\! -\! m_{3})\delta_{y}, 0\right]^{\top}\\
&= {\bf a}_{3}\! +\! \rho_{3}{\bf a}_{2}\! -\! {\bf a}_{2}\! -\! \rho_{1}\rho_{4}{\bf a}_{1}\! +\! \psi_{1}{\bf a}_{1}\! +\! {\bf x}\!\left(i\! -\! m_{2}\! +\! m_{1}, j\! +\! \tfrac{1}{2}\! +\! n_{2}\! -\! m_{3}, 0\right);
\end{align*}
\end{small}
if $m_{3} \leq j < n_{2}$ and $0 \leq i < m_{2}$, then
\begin{align*}
{\bf x}(i, \hat{j}, n_{3}) &= {\bf a}_{3} + \left[(i + n_{1} - m_{2})\delta_{x}, (j + \tfrac{1}{2} - m_{3})\delta_{y}, 0\right]^{\top}\\
&= {\bf a}_{3} + \rho_{3}{\bf a}_{2} + \rho_{2}{\bf a}_{1} - {\bf a}_{1} + {\bf x}\!\left(i + n_{1} - m_{2}, j + \tfrac{1}{2} - m_{3}, 0\right);
\end{align*}
otherwise, i.e. $m_{3} \leq j < n_{2}$ and $m_{2} \leq i < n_{1}$, then
\begin{align*}
{\bf x}(i, \hat{j}, n_{3}) &= {\bf a}_{3} + {\bf a}_{1} + \left[(i - m_{2})\delta_{x}, (j + \tfrac{1}{2} - m_{3})\delta_{y}, 0\right]^{\top}\\
&= {\bf a}_{3} + \rho_{3}{\bf a}_{2} + \rho_{2}{\bf a}_{1} + {\bf x}\!\left(i - m_{2}, j + \tfrac{1}{2} - m_{3}, 0\right).
\end{align*}
\end{itemize}
The above four rewritten imply that
\begin{small}
\begin{align*}
E_{2}(i, \hat{j}, n_{3}) = \begin{cases}
e^{\imath 2\pi{\bf k}\cdot\left[{\bf a}_{3} + \rho_{3}{\bf a}_{2} - ({\bf a}_{2} + \rho_{1}\rho_{4}{\bf a}_{1} - \psi_{1}{\bf a}_{1}) - {\bf a}_{1}\right]}E_{2}(i\! +\! n_{1}\! -\! m_{2}\! +\! m_{1}, \hat{j}\! +\! n_{2}\! -\! m_{3}, 0)\\
\hspace{6cm}\mbox{if }  0 \leq i < m_{2}\! -\! m_{1}, 0 \leq j < m_{3};\\
e^{\imath 2\pi{\bf k}\cdot\left[{\bf a}_{3} + \rho_{3}{\bf a}_{2} - ({\bf a}_{2} + \rho_{1}\rho_{4}{\bf a}_{1} - \psi_{1}{\bf a}_{1})\right]}E_{2}(i\! -\! m_{2}\! +\! m_{1}, \hat{j}\! +\! n_{2}\! -\! m_{3}, 0)\\
\hspace{6cm}\mbox{if }  m_{2}\! -\! m_{1} \leq i < n_{1}, 0 \leq j < m_{3};\\
e^{\imath 2\pi{\bf k}\cdot\left[{\bf a}_{3} + \rho_{3}{\bf a}_{2} + \rho_{2}{\bf a}_{1} - {\bf a}_{1}\right]}E_{2}(i + n_{1} - m_{2}, \hat{j} - m_{3}, 0)\\
\hspace{6.7cm}\mbox{if } 0 \leq i < m_{2}, m_{3} \leq j < n_{2};\\
e^{\imath 2\pi{\bf k}\cdot\left[{\bf a}_{3} + \rho_{3}{\bf a}_{2} + \rho_{2}{\bf a}_{1}\right]}E_{2}(i - m_{2}, \hat{j} - m_{3}, 0)\\
\hspace{6.7cm} \mbox{if }  m_{2} \leq i < n_{1}, m_{3} \leq j < n_{2}.
\end{cases}
\end{align*}
\end{small}

Second, we consider the situation $m_{1} > m_{2}$, there are alos four possibilities:
\begin{itemize}
\item[(i)] $\cos\theta_{\gamma} \geq 0$, $\cos\theta_{\beta} \geq 0$ ($\rho_{1} = 0$, $\rho_{2} = 0$, $\rho_{4}$ = 0, $\psi_{1} = 1$)\\
If $0 \leq j < m_{3}$ and $0 \leq i < n_{1} - m_{1} + m_{2}$, then
\begin{small}
\begin{align*}
{\bf x}(i, \hat{j}, n_{3}) &= {\bf a}_{3}\!\! -\!\! {\bf a}_{2}\!\! +\!\! \left[(i\!\! +\!\! m_{1}\!\! -\!\! m_{2})\delta_{x}, (j\!\! +\!\! \tfrac{1}{2}\!\! +\!\! n_{2}\!\! -\!\! m_{3})\delta_{y}, 0\right]^{\top}\\
&= {\bf a}_{3}\!\! +\!\! \rho_{3}{\bf a}_{2}\!\! -\!\! {\bf a}_{2}\!\! +\!\! \rho_{2}\rho_{4}{\bf a}_{1}\!\! +\!\! \psi_{1}{\bf a}_{1}\!\! -\!\! {\bf a}_{1}\!\! +\!\! {\bf x}\!\left(i\!\! +\!\! m_{1}\!\! -\!\! m_{2}, j\!\! +\!\! \tfrac{1}{2}\!\! +\!\! n_{2}\!\! - m_{3}, 0\right);
\end{align*}
\end{small}
if $0 \leq j < m_{3}$ and $n_{1} - m_{1} + m_{2} \leq i < n_{1}$, then
\begin{small}
\begin{align*}
{\bf x}(i, \hat{j}, n_{3}) &= {\bf a}_{3}\!\! +\!\! \rho_{3}{\bf a}_{2}\!\! -\!\! {\bf a}_{2}\!\! +\!\! \left[(i\!\! +\!\! m_{1}\!\! -\!\! m_{2})\delta_{x}, (j\!\! +\!\! \tfrac{1}{2}\!\! +\!\! n_{2}\!\! -\!\! m_{3})\delta_{y}, 0\right]^{\top}\\
&= {\bf a}_{3}\!\! +\!\! \rho_{3}{\bf a}_{2}\!\! -\!\! {\bf a}_{2}\!\! +\!\! {\bf a}_{1}\!\! +\!\! \left[(i\!\! -\!\! n_{1}\!\! +\!\! m_{1}\!\! -\!\! m_{2})\delta_{x}, (j\!\! +\!\! \tfrac{1}{2}\!\! +\!\! n_{2}\!\! -\!\! m_{3})\delta_{y}, 0\right]^{\top}\\
&= {\bf a}_{3}\!\! +\!\! \rho_{3}{\bf a}_{2}\!\! -\!\! {\bf a}_{2}\!\! +\!\! \rho_{2}\rho_{4}{\bf a}_{1}\!\! +\!\! \psi_{1}{\bf a}_{1}\!\! +\!\! {\bf x}\!\left(i\!\! -\!\! n_{1}\!\! +\!\! m_{1}\!\! -\!\! m_{2}, j\!\! +\!\! \tfrac{1}{2}\!\! +\!\! n_{2}\!\! -\!\! m_{3}, 0\right);
\end{align*}
\end{small}
if $m_{3} \leq j < n_{2}$ and $0 \leq i < m_{2}$, then
\begin{align*}
{\bf x}(i, \hat{j}, n_{3}) &= {\bf a}_{3} - {\bf a}_{1} + \left[(i\! +\! n_{1}\! -\! m_{2})\delta_{x}, (j\! +\! \tfrac{1}{2}\! -\! m_{3})\delta_{y}, 0\right]^{\top}\\
&= {\bf a}_{3} + \rho_{3}{\bf a}_{2} + \rho_{2}{\bf a}_{1} - {\bf a}_{1} + {\bf x}\!\left(i\! +\! n_{1}\! -\! m_{2}, j\! +\! \tfrac{1}{2}\! -\! m_{3}, 0\right);
\end{align*}
otherwise, i.e. $m_{3} \leq j < n_{2}$ and $m_{2} \leq i < n_{1}$, then
\begin{align*}
{\bf x}(i, \hat{j}, n_{3}) = {\bf a}_{3} + \rho_{3}{\bf a}_{2} + \rho_{2}{\bf a}_{1} + {\bf x}\!(i - m_{2}, j + \tfrac{1}{2} - m_{3}, 0).
\end{align*}
\item[(ii)] $\cos\theta_{\gamma} \geq 0$, $\cos\theta_{\beta} < 0$ ($\rho_{1} = 0$, $\rho_{2} = 1$, $\rho_{4} = 1$, $\psi_{1} = 1$)\\
If $0 \leq j < m_{3}$ and $0 \leq i < n_{1} - m_{1} + m_{2}$, then
\begin{small}
\begin{align*}
{\bf x}(i, \hat{j}, n_{3}) &= {\bf a}_{3}\!\! -\!\! {\bf a}_{2}\!\! +\!\! \left[(i\!\! +\!\! n_{1}\!\! +\!\! m_{1}\!\! -\!\! m_{2})\delta_{x}, (j\!\! +\!\! \tfrac{1}{2}\!\! +\!\! n_{2}\!\! -\!\! m_{3})\delta_{y}, 0\right]^{\top}\\
&= {\bf a}_{3}\!\! -\!\! {\bf a}_{2}\!\! +\!\! {\bf a}_{1}\!\! +\!\! \left[(i\!\! +\!\! m_{1}\!\! -\!\! m_{2})\delta_{x}, (j\!\! +\!\! \tfrac{1}{2}\!\! +\!\! n_{2}\!\! -\!\! m_{3})\delta_{y}, 0\right]^{\top}\\
&= {\bf a}_{3}\!\! +\!\! \rho_{3}{\bf a}_{2}\!\! -\!\! {\bf a}_{2}\!\! +\!\! \rho_{2}\rho_{4}{\bf a}_{1}\!\!  +\!\! \psi_{1}{\bf a}_{1}\!\! -\!\! {\bf a}_{1}\!\! +\!\! {\bf x}\!\left(i\!\! +\!\! m_{1}\!\! -\!\! m_{2}, j\!\! +\!\! \tfrac{1}{2}\!\! +\!\! n_{2}\!\! -\!\! m_{3}, 0\right);
\end{align*}
\end{small}
if $0 \leq j < m_{3}$ and $n_{1} - m_{1} + m_{2} \leq i < n_{1}$, then
\begin{small}
\begin{align*}
{\bf x}(i, \hat{j}, n_{3}) &= {\bf a}_{3}\!\! -\!\! {\bf a}_{2}\!\! + 2{\bf a}_{1}\!\! +\!\! \left[(i\!\! -\!\! n_{1}\!\! +\!\! m_{1}\!\! -\!\! m_{2})\delta_{x}, (j\!\! +\!\! \tfrac{1}{2}\!\! +\!\! n_{2}\!\! -\!\! m_{3})\delta_{y}, 0\right]^{\top}\\
&= {\bf a}_{3}\!\! +\!\! \rho_{3}{\bf a}_{2}\!\! -\!\! {\bf a}_{2}\!\! +\!\! \rho_{2}\rho_{4}{\bf a}_{1}\!\!  +\!\! \psi_{1}{\bf a}_{1}\!\! +\!\! {\bf x}\!\left(i\!\! -\!\! n_{1}\!\! +\!\! m_{1}\!\! -\!\! m_{2}, j\!\! +\!\! \tfrac{1}{2}\!\! +\!\! n_{2}\!\! -\!\! m_{3}, 0\right);
\end{align*}
\end{small}
if $m_{3} \leq j < n_{2}$ and $0 \leq i < m_{2}$, then
\begin{align*}
{\bf x}(i, \hat{j}, n_{3}) &= {\bf a}_{3} + \left[(i\! +\! n_{1}\! -\! m_{2})\delta_{x}, (j\! +\! \tfrac{1}{2}\! -\! m_{3})\delta_{y}, 0\right]^{\top}\\
&= {\bf a}_{3} + \rho_{3}{\bf a}_{2}  + \rho_{2}{\bf a}_{1}  - {\bf a}_{1} + {\bf x}\!\left(i\! +\! n_{1}\! -\! m_{2}, j\! +\! \tfrac{1}{2}\! - m_{3}, 0\right);
\end{align*}
otherwise, i.e. $m_{3} \leq j < n_{2}$ and $m_{2} \leq i < n_{1}$, then
\begin{align*}
{\bf x}(i, \hat{j}, n_{3}) &= {\bf a}_{3} + {\bf a}_{1} + \left[(i\! -\! m_{2})\delta_{x}, (j\! +\! \tfrac{1}{2}\! -\! m_{3})\delta_{y}, 0\right]^{\top}\\
&= {\bf a}_{3} + \rho_{3}{\bf a}_{2} + \rho_{2}{\bf a}_{1} + {\bf x}\!\left(i\! -\! m_{2}, j\! +\! \tfrac{1}{2}\! -\! m_{3}, 0\right).
\end{align*}
\item[(iii)] $\cos\theta_{\gamma} < 0$, $\cos\theta_{\beta} \geq 0$ ($\rho_{1} = 1$, $\rho_{2} = 0$, $\rho_{4} = 1$, $\psi_{1} = 0$)\\
If $0 \leq j < m_{3}$ and $0 \leq i < n_{1} - m_{1} + m_{2}$, then
\begin{small}
\begin{align*}
{\bf x}(i, \hat{j}, n_{3}) &= {\bf a}_{3}\!\! -\!\! {\bf a}_{2}\!\! +\!\! \left[(i\!\! -\!\! m_{2}\!\! +\!\! m_{1}\!\! -\!\! n_{1})\delta_{x}, (j\!\! +\!\! \tfrac{1}{2}\!\! +\!\! n_{2}\!\! -\!\! m_{3})\delta_{y}, 0\right]^{\top}\\
&= {\bf a}_{3}\!\! -\!\! {\bf a}_{2}\!\! -\!\! {\bf a}_{1}\!\! +\!\! \left[(i\!\! -\!\! m_{2}\!\! +\!\! m_{1})\delta_{x}, (j\!\! +\!\! \tfrac{1}{2}\!\! +\!\! n_{2}\!\! -\!\! m_{3})\delta_{y}, 0\right]^{\top}\\
&= {\bf a}_{3}\!\! +\!\! \rho_{3}{\bf a}_{2}\!\! -\!\! {\bf a}_{2}\!\! +\!\! \rho_{2}\rho_{4}{\bf a}_{1}\!\! +\!\! \psi_{1}{\bf a}_{1}\!\! -\!\! {\bf a}_{1}\!\! +\!\! {\bf x}\!\left(i\!\! -\!\! m_{2}\!\! +\!\! m_{1}, j\!\! +\!\! \tfrac{1}{2}\!\! +\!\! n_{2}\!\! -\!\! m_{3}, 0\right);
\end{align*}
\end{small}
if $0 \leq j < m_{3}$ and $n_{1} - m_{1} + m_{2} \leq i < n_{1}$, then
\begin{small}
\begin{align*}
{\bf x}(i, \hat{j}, n_{3}) &= {\bf a}_{3}\!\! -\!\! {\bf a}_{2}\!\! +\!\! \left[(i\!\! -\!\! m_{2}\!\! +\!\! m_{1}\!\! -\!\! n_{1})\delta_{x}, (j\!\! +\!\! \tfrac{1}{2}\!\! +\!\! n_{2}\!\! -\!\! m_{3})\delta_{y}, 0\right]^{\top}\\
&= {\bf a}_{3}\!\! +\!\! \rho_{3}{\bf a}_{2}\!\! -\!\! {\bf a}_{2}\!\! +\!\! \rho_{2}\rho_{4}{\bf a}_{1}\!\! +\!\! \psi_{1}{\bf a}_{1}\!\! +\!\! {\bf x}\!\left(i\!\! -\!\! n_{1}\!\! +\!\! m_{1}\!\! -\!\! m_{2}, j\!\! +\!\! \tfrac{1}{2}\!\! +\!\! n_{2}\!\! -\!\! m_{3}, 0\right);
\end{align*}
\end{small}
if $m_{3} \leq j < n_{2}$ and $0 \leq i < m_{2}$, then
\begin{align*}
{\bf x}(i, \hat{j}, n_{3}) &= {\bf a}_{3}\! -\! {\bf a}_{1}\! +\! \left[(i\! +\! n_{1}\! -\! m_{2})\delta_{x}, (j\! +\! \tfrac{1}{2}\! -\! m_{3})\delta_{y}, 0\right]^{\top}\\
&= {\bf a}_{3}\! +\! \rho_{3}{\bf a}_{2}\! +\! \rho_{2}{\bf a}_{1}\! -\! {\bf a}_{1}\! +\! {\bf x}\!\left(i\! +\! n_{1}\! -\! m_{2}, j\! +\! \tfrac{1}{2}\! -\! m_{3}, 0\right);
\end{align*}
otherwise, i.e. $m_{3} \leq j < n_{2}$ and $m_{2} \leq i < n_{1}$, then
\begin{align*}
{\bf x}(i, \hat{j}, n_{3}) &= {\bf a}_{3} + \left[(i\! -\! m_{2})\delta_{x}, (j\! +\! \tfrac{1}{2}\! -\! m_{3})\delta_{y}, 0\right]^{\top}\\
&= {\bf a}_{3} + \rho_{3}{\bf a}_{2} + \rho_{2}{\bf a}_{1} + {\bf x}\!\left(i\! -\! m_{2}, j\! +\! \tfrac{1}{2}\! -\! m_{3}, 0\right).
\end{align*}
\item[(iv)] $\cos\theta_{\gamma} < 0$, $\cos\theta_{\beta} < 0$ ($\rho_{1} = 1$, $\rho_{2} = 1$, $\rho_{4} = 0$, $\psi_{1} = 1$)\\
If $0 \leq j < m_{3}$ and $0 \leq i < n_{1} - m_{1} + m_{2}$, then
\begin{small}
\begin{align*}
{\bf x}(i, \hat{j}, n_{3}) &= {\bf a}_{3}\!\! -\!\! {\bf a}_{2}\!\! +\!\! \left[(i\!\! -\!\! (m_{2}\!\! -\!\! n_{1})\!\! +\!\! (m_{1}\!\! -\!\! n_{1}))\delta_{x}, (j\!\! +\!\! \tfrac{1}{2}\!\! +\!\! n_{2}\!\! -\!\! m_{3})\delta_{y}, 0\right]^{\top}\\
&= {\bf a}_{3}\!\! +\!\! \rho_{3}{\bf a}_{2}\!\! -\!\! {\bf a}_{2}\!\! +\!\! \rho_{2}\rho_{4}{\bf a}_{1}\!\! +\!\! \psi_{1}{\bf a}_{1}\!\! -\!\! {\bf a}_{1}\!\! +\!\! {\bf x}\!\left(i\!\! +\!\! m_{1}\!\! -\!\! m_{2}, j\!\! +\!\! \tfrac{1}{2}\!\! +\!\! n_{2}\!\! -\!\! m_{3}, 0\right);
\end{align*}
\end{small}
if $0 \leq j < m_{3}$ and $n_{1} - m_{1} + m_{2} \leq i < n_{1}$, then
\begin{small}
\begin{align*}
{\bf x}(i, \hat{j}, n_{3}) &= {\bf a}_{3}\!\! -\!\! {\bf a}_{2}\!\! +\!\! {\bf a}_{1}\!\! +\!\! \left[(i\!\! -\!\! n_{1}\!\! +\!\! m_{1}\!\! -\!\! m_{2})\delta_{x}, (j\!\! +\!\! \tfrac{1}{2}\!\! +\!\! n_{2}\!\! -\!\! m_{3})\delta_{y}, 0\right]^{\top}\\
&= {\bf a}_{3}\!\! +\!\! \rho_{3}{\bf a}_{2}\!\! -\!\! {\bf a}_{2}\!\! +\!\! \rho_{2}\rho_{4}{\bf a}_{1}\!\! +\!\! \psi_{1}{\bf a}_{1}\!\! +\!\! {\bf x}\!\left(i\!\! -\!\! n_{1}\!\! +\!\! m_{1}\!\! -\!\! m_{2}, j\!\! +\!\! \tfrac{1}{2}\!\! +\!\! n_{2}\!\! -\!\! m_{3}, 0\right);
\end{align*}
\end{small}
if $m_{3} \leq j < n_{2}$ and $0 \leq i < m_{2}$, then
\begin{align*}
{\bf x}(i, \hat{j}, n_{3}) &= {\bf a}_{3}\! +\! \left[(i\! -\! m_{2}\! +\! n_{1})\delta_{x}, (j\! +\! \tfrac{1}{2}\! -\! m_{3})\delta_{y}, 0\right]^{\top}\\
&= {\bf a}_{3}\! +\! \rho_{3}{\bf a}_{2}\! +\! \rho_{2}{\bf a}_{1}\! -\! {\bf a}_{1}\! +\! {\bf x}\!\left(i\! +\! n_{1}\! -\! m_{2}, j\! +\! \tfrac{1}{2}\! -\! m_{3}, 0\right);
\end{align*}
otherwise, i.e. $m_{3} \leq j < n_{2}$ and $m_{2} \leq i < n_{1}$, then
\begin{align*}
{\bf x}(i, \hat{j}, n_{3}) &= {\bf a}_{3}\! +\! {\bf a}_{1}\! +\! \left[(i\! -\! m_{2})\delta_{x}, (j\! +\! \tfrac{1}{2}\! -\! m_{3})\delta_{y}, 0\right]^{\top}\\
&= {\bf a}_{3}\! + \rho_{3}{\bf a}_{2}\! +\! \rho_{2}{\bf a}_{1}\! +\! {\bf x}\!\left(i\! -\! m_{2}, j\! +\! \tfrac{1}{2}\! -\! m_{3}, 0\right).
\end{align*}
\end{itemize}
The above four rewritten imply that
\begin{small}
\begin{align*}
E_{2}(i, \hat{j}, n_{3}) = \begin{cases}
e^{\imath 2\pi{\bf k}\cdot\left[{\bf a}_{3} + \rho_{3}{\bf a}_{2} - ({\bf a}_{2} - \rho_{2}\rho_{4}{\bf a}_{1} - \psi_{1}{\bf a}_{1}) - {\bf a}_{1}\right]}E_{2}(i\! +\! m_{1}\! -\! m_{2}, \hat{j}\! +\! n_{2}\! -\! m_{3}, 0)\\
\hspace{4.8cm} \mbox{if } 0 \leq i < n_{1}\! -\! m_{1}\! +\! m_{2}, 0 \leq j < m_{3};\\
e^{\imath 2\pi{\bf k}\cdot\left[{\bf a}_{3} + \rho_{3}{\bf a}_{2} - ({\bf a}_{2} - \rho_{2}\rho_{4}{\bf a}_{1} - \psi_{1}{\bf a}_{1})\right]}E_{2}(i\! -\! n_{1}\! +\! m_{1}\! -\! m_{2}, \hat{j}\! +\! n_{2}\! -\! m_{3}, 0)\\
\hspace{4.8cm} \mbox{if }  n_{1}\! -\! m_{1}\! +\! m_{2} \leq i < n_{1}, 0 \leq j < m_{3};\\
e^{\imath 2\pi{\bf k}\cdot\left[{\bf a}_{3} + \rho_{3}{\bf a}_{2} + \rho_{2}{\bf a}_{1} - {\bf a}_{1}\right]}E_{2}(i + n_{1} - m_{2}, \hat{j} - m_{3}, 0)\\
\hspace{5.5cm} \mbox{if } 0 \leq i < m_{2}, m_{3} \leq j < n_{2};\\
e^{\imath 2\pi{\bf k}\cdot\left[{\bf a}_{3} + \rho_{3}{\bf a}_{2} + \rho_{2}{\bf a}_{1}\right]}E_{2}(i - m_{2}, \hat{j} - m_{3}, 0)\\
\hspace{5.5cm} \mbox{if } m_{2} \leq i < n_{1}, m_{3} \leq j < n_{2}.
\end{cases}
\end{align*}
\end{small}

\noindent{\bf{Case II:}} $\cos\theta_{\alpha} - \cos\theta_{\gamma}\cos\theta_{\beta} < 0$ $(\rho_{3} = 1)$\\
Different from the above case, any vector can be rewitten as
\begin{align}\label{eq:arbpt_case2}
{\bf x}(i, \hat{j}, n_{3}) = {\bf a}_{3} + \left[i\delta_{x} - \|{\bf a}_{3}\|\!\cos\theta_{\beta}, (j + \tfrac{1}{2} + n_{2} - m_{3})\delta_{y}, 0\right]^{\top}.
\end{align}
First, let us consider $m_{1} + m_{2} \leq n_{1}$. In this situation, if we want to rewrite \eqref{eq:arbpt_case2}, there are four possibilities:
\begin{itemize}
\item[(i)] $\cos\theta_{\gamma} \geq 0$, $\cos\theta_{\beta} \geq 0$ ($\rho_{1} = 0$, $\rho_{2} = 0$, $\rho_{4} = 1$, $\psi_{2} = 0$)\\
If $0 \leq j < m_{3}$ and $0 \leq i < m_{2}$, then
\begin{align*}
{\bf x}(i, \hat{j}, n_{3}) &= {\bf a}_{3}\! -\! {\bf a}_{1}\! +\! \left[(i\! +\! n_{1}\! -\! m_{2})\delta_{x}, (j\! +\! \tfrac{1}{2}\! +\! n_{2}\! -\! m_{3})\delta_{y}, 0\right]^{\top}\\
&= {\bf a}_{3}\! +\! \rho_{3}{\bf a}_{2}\! -\! {\bf a}_{2}\! +\! \rho_{2}{\bf a}_{1}\! -\! {\bf a}_{1}\! +\! {\bf x}\!\left(i\! +\! n_{1}\! -\! m_{2}, j\! +\! \tfrac{1}{2}\! +\! n_{2}\! -\! m_{3}, 0\right);
\end{align*}
if $0 \leq j < m_{3}$ and $m_{2} \leq i < n_{1}$, then
\begin{align*}
{\bf x}(i, \hat{j}, n_{3}) &= {\bf a}_{3}\! +\! \left[(i\! -\! m_{2})\delta_{x}, (j\! +\! \tfrac{1}{2}\! +\! n_{2}\! -\! m_{3})\delta_{y}, 0\right]^{\top}\\
&= {\bf a}_{3}\! +\! \rho_{3}{\bf a}_{2}\! -\! {\bf a}_{2}\! +\! \rho_{2}{\bf a}_{1}\! +\! {\bf x}\!\left(i\! -\! m_{2}, j\! +\! \tfrac{1}{2}\! +\! n_{2}\! -\! m_{3}, 0\right);
\end{align*}
if $m_{3} \leq j < n_{2}$ and $0 \leq i < m_{1} + m_{2}$, then
\begin{small}
\begin{align*}
{\bf x}(i, \hat{j}, n_{3}) &= {\bf a}_{3}\!\! +\!\! {\bf a}_{2}\!\! +\!\! \left[(i\!\! -\!\! m_{2}\!\! -\!\! m_{1})\delta_{x}, (j\!\! +\!\! \tfrac{1}{2}\!\! -\!\! m_{3})\delta_{y}, 0\right]^{\top}\\
&= {\bf a}_{3}\!\! +\!\! {\bf a}_{2}\!\! -\!\! {\bf a}_{1}\!\! +\!\! \left[(i\!\! +\!\! n_{1}\!\! -\!\! m_{1}\!\! -\!\! m_{2})\delta_{x}, (j\!\! +\!\! \tfrac{1}{2}\!\! -\!\! m_{3})\delta_{y}, 0\right]^{\top}\\
&= {\bf a}_{3}\!\! +\!\! \rho_{3}{\bf a}_{2}\!\! +\!\! \rho_{1}\rho_{4}{\bf a}_{1}\!\! +\!\! \psi_{2}{\bf a}_{1}\!\! -\!\! {\bf a}_{1}\!\! +\!\! {\bf x}\!\left(i\!\! +\!\! n_{1}\!\! -\!\! m_{1}\!\! -\!\! m_{2}, j\!\! +\!\! \tfrac{1}{2}\!\! -\!\! m_{3}, 0\right);
\end{align*}
\end{small}
otherwise, i.e. $m_{3} \leq j < n_{2}$ and $m_{1} + m_{2} \leq i < n_{1}$, then
\begin{small}
\begin{align*}
{\bf x}(i, \hat{j}, n_{3}) &= {\bf a}_{3}\!\! +\!\! {\bf a}_{2}\!\! +\!\! \left[(i\!\! -\!\! m_{2}\!\! -\!\! m_{1})\delta_{x}, (j\!\! +\!\! \tfrac{1}{2}\!\! -\!\! m_{3})\delta_{y}, 0\right]^{\top}\\
&= {\bf a}_{3}\!\! +\!\! \rho_{3}{\bf a}_{2}\!\! +\!\! \rho_{1}\rho_{4}{\bf a}_{1}\!\! +\!\! \psi_{2}{\bf a}_{1}\!\! +\!\! {\bf x}\!\left(i\!\! -\!\! m_{1}\!\! -\!\! m_{2}, j\!\! +\!\! \tfrac{1}{2}\!\! -\!\! m_{3}, 0\right).
\end{align*}
\end{small}
\item[(ii)] $\cos\theta_{\gamma} \geq 0$, $\cos\theta_{\beta} < 0$ ($\rho_{1} = 0$, $\rho_{2} = 1$, $\rho_{4} = 0$, $\psi_{2} = 1$)\\
If $0 \leq j < m_{3}$ and $0 \leq i < m_{2}$, then
\begin{align*}
{\bf x}(i, \hat{j}, n_{3}) &= {\bf a}_{3}\!\! +\!\! \left[(i\!\! +\!\! n_{1}\!\! -\!\! m_{2})\delta_{x}, (j\!\! +\!\! \tfrac{1}{2}\!\! +\!\! n_{2}\!\! -\!\! m_{3})\delta_{y}, 0\right]^{\top}\\
&= {\bf a}_{3}\!\! +\!\! \rho_{3}{\bf a}_{2}\!\! -\!\! {\bf a}_{2}\!\! +\!\! \rho_{2}{\bf a}_{1}\!\! -\!\! {\bf a}_{1}\!\! +\!\! {\bf x}\!\left(i\!\! +\!\! n_{1}\!\! -\!\! m_{2}, j\!\! +\!\! \tfrac{1}{2}\!\! +\!\! n_{2}\!\! -\!\! m_{3}, 0\right);
\end{align*}
if $0 \leq j < m_{3}$ and $m_{2} \leq i < n_{1}$, then
\begin{align*}
{\bf x}(i, \hat{j}, n_{3}) &= {\bf a}_{3}\!\! +\!\! {\bf a}_{1}\!\! +\!\! \left[(i\!\! -\!\! m_{2})\delta_{x}, (j\!\! +\!\! \tfrac{1}{2}\!\! +\!\! n_{2}\!\! -\!\! m_{3})\delta_{y}, 0\right]^{\top}\\
&= {\bf a}_{3}\!\! +\!\! \rho_{3}{\bf a}_{2}\!\! -\!\! {\bf a}_{2}\!\! +\!\! \rho_{2}{\bf a}_{1}\!\! +\!\! {\bf x}\!\left(i\!\! -\!\! m_{2}, j\!\! +\!\! \tfrac{1}{2}\!\! +\!\! n_{2}\!\! -\!\! m_{3}, 0\right);
\end{align*}
if $m_{3} \leq j < n_{2}$ and $0 \leq i < m_{1} + m_{2}$, then
\begin{small}
\begin{align*}
{\bf x}(i, \hat{j}, n_{3}) &= {\bf a}_{3}\!\! +\!\! {\bf a}_{2}\!\! +\!\! \left[(i\!\! +\!\! n_{1}\!\! -\!\! m_{2}\!\! -\!\! m_{1})\delta_{x}, (j\!\! +\!\! \tfrac{1}{2}\!\! -\!\! m_{3})\delta_{y}, 0\right]^{\top}\\
&= {\bf a}_{3}\!\! +\!\! \rho_{3}{\bf a}_{2}\!\! +\!\! \rho_{1}\rho_{4}{\bf a}_{1}\!\! +\!\! \psi_{2}{\bf a}_{1}\!\! -\!\! {\bf a}_{1}\!\! +\!\! {\bf x}\!\left(i\!\! +\!\! n_{1}\!\! -\!\! m_{1}\!\! -\!\! m_{2}, j\!\! +\!\! \tfrac{1}{2}\!\! -\!\! m_{3}, 0\right);
\end{align*}
\end{small}
otherwise, i.e. $m_{3} \leq j < n_{2}$ and $m_{1} + m_{2} \leq i < n_{1}$, then
\begin{small}
\begin{align*}
{\bf x}(i, \hat{j}, n_{3}) &= {\bf a}_{3}\!\! +\!\! {\bf a}_{2}\!\! +\!\! {\bf a}_{1}\!\! +\!\! \left[(i\!\! -\!\! m_{1}\!\! -\!\! m_{2})\delta_{x}, (j\!\! +\!\! \tfrac{1}{2}\!\! -\!\! m_{3})\delta_{y}, 0\right]^{\top}\\
&= {\bf a}_{3}\!\! +\!\! \rho_{3}{\bf a}_{2}\!\! +\!\! \rho_{1}\rho_{4}{\bf a}_{1}\!\! +\!\! \psi_{2}{\bf a}_{1}\!\! +\!\! {\bf x}\!\left(i\!\! -\!\! m_{1}\!\! -\!\! m_{2}, j\!\! +\!\! \tfrac{1}{2}\!\! -\!\! m_{3}, 0\right).
\end{align*}
\end{small}
\item[(iii)] $\cos\theta_{\gamma} < 0$, $\cos\theta_{\beta} \geq 0$ ($\rho_{1} = 1$, $\rho_{2} = 0$, $\rho_{4} = 0$, $\psi_{2} = 1$)\\
If $0 \leq j < m_{3}$ and $0 \leq i < m_{2}$, then
\begin{small}
\begin{align*}
{\bf x}(i, \hat{j}, n_{3}) &= {\bf a}_{3}\!\! -\!\! {\bf a}_{1}\!\! +\!\! \left[(i\!\! +\!\! n_{1}\!\! -\!\! m_{2})\delta_{x}, (j\!\! +\!\! \tfrac{1}{2}\!\! +\!\! n_{2}\!\! -\!\! m_{3})\delta_{y}, 0\right]^{\top}\\
&= {\bf a}_{3}\!\! +\!\! \rho_{3}{\bf a}_{2}\!\! -\!\! {\bf a}_{2}\!\! +\!\! \rho_{2}{\bf a}_{1}\!\! -\!\! {\bf a}_{1}\!\! +\!\! {\bf x}\!\left(i\!\! +\!\! n_{1}\!\! -\!\! m_{2}, j\!\! +\!\! \tfrac{1}{2}\!\! +\!\! n_{2}\!\! -\!\! m_{3}, 0\right);
\end{align*}
\end{small}
if $0 \leq j < m_{3}$ and $m_{2} \leq i < n_{1}$, then
\begin{align*}
{\bf x}(i, \hat{j}, n_{3}) &= {\bf a}_{3}\! +\! \left[(i\! -\! m_{2})\delta_{x}, (j\! +\! \tfrac{1}{2}\! +\! n_{2}\! -\! m_{3})\delta_{y}, 0\right]^{\top}\\
&= {\bf a}_{3}\! +\! \rho_{3}{\bf a}_{2}\! -\! {\bf a}_{2}\! +\! \rho_{2}{\bf a}_{1}\! +\! {\bf x}\!\left(i\! -\! m_{2}, j\! +\! \tfrac{1}{2}\! +\! n_{2}\! -\! m_{3}, 0\right);
\end{align*}
if $m_{3} \leq j < n_{2}$ and $0 \leq i < m_{1} + m_{2}$, then
\begin{small}
\begin{align*}
{\bf x}(i, \hat{j}, n_{3}) &= {\bf a}_{3}\!\! +\!\! {\bf a}_{2}\!\! +\!\! \left[(i\!\! -\!\! m_{2}\!\! -\!\! m_{1}\!\! +\!\! n_{1})\delta_{x}, (j\!\! +\!\! \tfrac{1}{2}\!\! -\!\! m_{3})\delta_{y}, 0\right]^{\top}\\
&= {\bf a}_{3}\!\! +\!\! \rho_{3}{\bf a}_{2}\!\! +\!\! \rho_{1}\rho_{4}{\bf a}_{1}\!\! +\!\! \psi_{2}{\bf a}_{1}\!\! -\!\! {\bf a}_{1}\!\! +\!\! {\bf x}\!\left(i\!\! +\!\! n_{1}\!\! -\!\! m_{1}\!\! -\!\! m_{2}, j\!\! +\!\! \tfrac{1}{2}\!\! -\!\! m_{3}, 0\right);
\end{align*}
\end{small}
otherwise, i.e. $m_{3} \leq j < n_{2}$ and $m_{1} + m_{2} \leq i < n_{1}$, then
\begin{small}
\begin{align*}
{\bf x}(i, \hat{j}, n_{3}) &= {\bf a}_{3}\! +\! {\bf a}_{2}\! +\! {\bf a}_{1}\! +\! \left[(i\! -\! m_{2}\! -\! m_{1})\delta_{x}, (j\! +\! \tfrac{1}{2}\! -\! m_{3})\delta_{y}, 0\right]^{\top}\\
&= {\bf a}_{3}\! +\! \rho_{2}{\bf a}_{2}\! +\! \rho_{1}\rho_{4}{\bf a}_{1}\! +\! \psi_{2}{\bf a}_{1}\! +\! {\bf x}\!\left(i\! -\! m_{1}\! -\! m_{2}, j\! +\! \tfrac{1}{2}\! -\! m_{3}, 0\right).
\end{align*}
\end{small}
\item[(iv)] $\cos\theta_{\gamma} < 0$, $\cos\theta_{\beta} < 0$ ($\rho_{1} = 1$, $\rho_{2} = 1$, $\rho_{4} = 1$, $\psi_{2} = 1$)\\
If $0 \leq j < m_{3}$ and $0 \leq i < m_{2}$, then
\begin{small}
\begin{align*}
{\bf x}(i, \hat{j}, n_{3}) &= {\bf a}_{3}\! +\! \left[(i\! +\! n_{1}\! -\! m_{2})\delta_{x}, (j\! +\! \tfrac{1}{2}\! +\! n_{2}\! -\! m_{3})\delta_{y}, 0\right]^{\top}\\
&= {\bf a}_{3}\! +\! \rho_{3}{\bf a}_{2}\! -\! {\bf a}_{2}\! +\! \rho_{2}{\bf a}_{1}\! -\! {\bf a}_{1}\! +\! {\bf x}\!\left(i\! +\! n_{1}\! -\! m_{2}, j\! +\! \tfrac{1}{2}\! +\! n_{2}\! -\! m_{3}, 0\right);
\end{align*}
\end{small}
if $0 \leq j < m_{3}$ and $m_{2} \leq i < n_{1}$, then
\begin{align*}
{\bf x}(i, \hat{j}, n_{3}) &= {\bf a}_{3}\! +\! {\bf a}_{1}\! +\! \left[(i\! -\! m_{2})\delta_{x}, (j\! +\! \tfrac{1}{2}\! +\! n_{2}\! -\! m_{3})\delta_{y}, 0\right]^{\top}\\
&= {\bf a}_{3}\! +\! \rho_{3}{\bf a}_{2}\! -\! {\bf a}_{2}\! +\! \rho_{2}{\bf a}_{1}\! +\! {\bf x}\!\left(i\! -\! m_{2}, j\! +\! \tfrac{1}{2}\! +\! n_{2}\! -\! m_{3}, 0\right);
\end{align*}
if $m_{3} \leq j < n_{2}$ and $0 \leq i < m_{1} + m_{2}$, then
\begin{small}
\begin{align*}
{\bf x}(i, \hat{j}, n_{3}) &= {\bf a}_{3}\!\! +\!\! {\bf a}_{2}\!\! +\!\! \left[(i\!\! -\!\! (m_{2}\!\! -\!\! n_{1})\!\! -\!\! (m_{1}\!\! -\!\! n_{1}))\delta_{x}, (j\!\! +\!\! \tfrac{1}{2}\!\! -\!\! m_{3})\delta_{y}, 0\right]^{\top}\\
&= {\bf a}_{3}\!\! +\!\! {\bf a}_{2}\!\! +\!\! {\bf a}_{1}\!\! +\!\! \left[(i\!\! +\!\! n_{1}\!\! -\!\! m_{1}\!\! -\!\! m_{2})\delta_{x}, (j\!\! +\!\! \tfrac{1}{2}\!\! -\!\! m_{3})\delta_{y}, 0\right]^{\top}\\
&= {\bf a}_{3}\!\! +\!\! \rho_{3}{\bf a}_{2}\!\! +\!\! \rho_{1}\rho_{4}{\bf a}_{1}\!\! +\!\! \psi_{2}{\bf a}_{1}\!\! -\!\! {\bf a}_{1}\!\! +\!\! {\bf x}\!\left(i\!\! +\!\! n_{1}\!\! -\!\! m_{1}\!\! -\!\! m_{2}, j\!\! +\!\! \tfrac{1}{2}\!\! -\!\! m_{3}, 0\right);
\end{align*}
\end{small}
otherwise, i.e. $m_{3} \leq j < n_{2}$ and $m_{1} + m_{2} \leq i < n_{1}$, then
\begin{small}
\begin{align*}
{\bf x}(i, \hat{j}, n_{3}) &= {\bf a}_{3}\! +\! {\bf a}_{2}\! +\! 2{\bf a}_{1}\! +\! \left[(i\! -\! m_{1}\! -\! m_{2})\delta_{x}, (j\! +\! \tfrac{1}{2}\! -\! m_{3})\delta_{y}, 0\right]^{\top}\\
&= {\bf a}_{3}\! +\! \rho_{3}{\bf a}_{2}\! +\! \rho_{1}\rho_{4}{\bf a}_{1}\! +\! \psi_{2}{\bf a}_{1}\! +\! {\bf x}\!\left(i\! -\! m_{1}\! -\! m_{2}, j\! +\! \tfrac{1}{2}\! -\! m_{3}, 0\right).
\end{align*}
\end{small}
\end{itemize}
The above four rewritten imply that
\begin{small}
\begin{align*}
E_{2}(i, \hat{j}, n_{3}) = \begin{cases}
e^{\imath 2\pi{\bf k}\cdot\left[{\bf a}_{3} + \rho_{3}{\bf a}_{2} - {\bf a}_{2} + \rho_{2}{\bf a}_{1} - {\bf a}_{1}\right]}E_{2}(i\! +\! n_{1}\! -\! m_{2}, \hat{j}\! +\! n_{2}\! -\! m_{3}, 0)\\
{\hspace{6cm}} \mbox{if } 0 \leq i < m_{2}, 0 \leq j < m_{3};\\
e^{\imath 2\pi{\bf k}\cdot\left[{\bf a}_{3} + \rho_{3}{\bf a}_{2} - {\bf a}_{2} + \rho_{2}{\bf a}_{1}\right]}E_{2}(i\! -\! m_{2}, \hat{j}\! +\! n_{2}\! -\! m_{3}, 0)\\
{\hspace{6cm}} \mbox{if } m_{2} \leq i < n_{1}, 0 \leq j < m_{3};\\
e^{\imath 2\pi{\bf k}\cdot\left[{\bf a}_{3} + \rho_{3}{\bf a}_{2} + (\rho_{1}\rho_{4} + \psi_{2}){\bf a}_{1} - {\bf a}_{1}\right]}E_{2}(i\! +\! n_{1}\! -\! m_{1}\! -\! m_{2}, \hat{j}\! -\! m_{3}, 0)\\
{\hspace{5cm}} \mbox{if } 0 \leq i < m_{1}\! +\! m_{2}, m_{3} \leq j < n_{2};\\
e^{\imath 2\pi{\bf k}\cdot\left[{\bf a}_{3} + \rho_{3}{\bf a}_{2}+(\rho_{1}\rho_{4} + \psi_{2}){\bf a}_{1}\right]}E_{2}(i\! -\! m_{1}\! -\! m_{2}, \hat{j}\! -\! m_{3}, 0)\\
{\hspace{5cm}} \mbox{if } m_{1}\! +\! m_{2} \leq i < n_{1}, m_{3} \leq j < n_{2}.
\end{cases}
\end{align*}
\end{small}
Second, let us consider $m_{1} + m_{2} > n_{1}$. In this situation, if we want to rewrite \eqref{eq:arbpt_case2}, there are also four possibilities:
\begin{itemize}
\item[(i)] $\cos\theta_{\gamma} \geq 0$, $\cos\theta_{\beta} \geq 0$ ($\rho_{2} = 0$, $\rho_{4} = 1$, $\rho_{5} = 1$, $\psi_{2} = 0$)\\
If $0 \leq j < m_{3}$ and $0 \leq i < m_{2}$, then
\begin{align*}
{\bf x}(i, \hat{j}, n_{3}) = {\bf a}_{3}\! +\! \rho_{3}{\bf a}_{2}\! -\! {\bf a}_{2}\! +\! \rho_{2}{\bf a}_{1}\! -\! {\bf a}_{1}\! +\! {\bf x}\!\left(i\! +\! n_{1}\! -\! m_{2}, j\! +\! \tfrac{1}{2}\! +\! n_{2}\! -\! m_{3}\right);
\end{align*}
if $0 \leq j < m_{3}$ and $m_{2} \leq i < n_{1}$, then
\begin{align*}
{\bf x}(i, \hat{j}, n_{3}) = {\bf a}_{3}\! +\! \rho_{3}{\bf a}_{2}\! -\! {\bf a}_{2}\! +\! \rho_{2}{\bf a}_{1}\! +\! {\bf x}\!\left(i\! -\! m_{2}, j\! +\! \tfrac{1}{2}\! +\! n_{2}\! -\! m_{3}, 0\right);
\end{align*}
if $m_{3} \leq j < n_{2}$ and $0 \leq i < m_{1} + m_{2} - n_{1}$, then
\begin{small}
\begin{align*}
{\bf x}(i, \hat{j}, n_{3}) &= {\bf a}_{3}\! +\! {\bf a}_{2}\! -\! 2{\bf a}_{1}\! +\! \left[(i\! +\! 2n_{1}\! -\! m_{1}\! -\! m_{2})\delta_{x}, (j\! +\! \tfrac{1}{2}\! -\! m_{3})\delta_{y}, 0\right]^{\top}\\
&= {\bf a}_{3}\! +\! \rho_{3}{\bf a}_{2}\! -\! \rho_{5}\rho_{4}{\bf a}_{1}\! +\! \psi_{2}{\bf a}_{1}\! -\! {\bf a}_{1}\! +\! {\bf x}\!\left(i\! +\! 2n_{1}\! -\! m_{1}\! -\! m_{2}, j\! +\! \tfrac{1}{2}\! -\! m_{3}, 0\right);
\end{align*}
\end{small}
otherwise, i.e. $m_{3} \leq j < n_{2}$ and $m_{1} + m_{2} - n_{1} \leq i < n_{1}$, then
\begin{small}
\begin{align*}
{\bf x}(i, \hat{j}, n_{3}) &= {\bf a}_{3}\! +\! {\bf a}_{2}\! -\! {\bf a}_{1}\! +\! \left[(i\! +\! n_{1}\! -\! m_{1}\! -\! m_{2})\delta_{x}, (j\! +\! \tfrac{1}{2}\! -\! m_{3})\delta_{y}, 0\right]^{\top}\\
&= {\bf a}_{3}\! +\! \rho_{3}{\bf a}_{2}\! -\! \rho_{5}\rho_{4}{\bf a}_{1}\! +\! \psi_{2}{\bf a}_{1}\! +\! {\bf x}\!\left(i\! +\! n_{1}\! -\! m_{1}\! -\! m_{2}, j\! +\! \tfrac{1}{2}\! -\! m_{3}, 0\right).
\end{align*}
\end{small}
\item[(ii)] $\cos\theta_{\gamma} \geq 0$, $\cos\theta_{\beta} < 0$ ($\rho_{2} = 1$, $\rho_{4} = 0$, $\rho_{5} = 0$, $\psi_{2} = 0$)\\
If $0 \leq j < m_{3}$ and $0 \leq i < m_{2}$, then
\begin{align*}
{\bf x}(i, \hat{j}, n_{3}) &= {\bf a}_{3}\! +\! \left[(i\! +\! n_{1}\! -\! m_{2})\delta_{x}, (j\! +\! \tfrac{1}{2}\! +\! n_{2}\! -\! m_{3})\delta_{y}, 0\right]^{\top}\\
&= {\bf a}_{3}\! +\! \rho_{3}{\bf a}_{2}\! -\! {\bf a}_{2}\! +\! \rho_{2}{\bf a}_{1}\! -\! {\bf a}_{1}\! +\! {\bf x}\!\left(i\! +\! n_{1}\! -\! m_{2}, j\! +\! \tfrac{1}{2}\! +\! n_{2}\! -\! m_{3}, 0\right);
\end{align*}
if $0 \leq j < m_{3}$ and $m_{2} \leq i < n_{1}$, then
\begin{align*}
{\bf x}(i, \hat{j}, n_{3}) &= {\bf a}_{3}\! +\! {\bf a}_{1}\! +\! \left[(i\! -\! m_{2})\delta_{x}, (j\! +\! \tfrac{1}{2}\! +\! n_{2}\! -\! m_{3})\delta_{y}, 0\right]^{\top}\\
&= {\bf a}_{3}\! +\! \rho_{3}{\bf a}_{2}\! -\! {\bf a}_{2}\! +\! \rho_{2}{\bf a}_{1}\! +\! {\bf x}\!\left(i\! -\! m_{2}, j\! +\! \tfrac{1}{2}\! +\! n_{2}\! -\! m_{3}, 0\right);
\end{align*}
if $m_{3} \leq j < n_{2}$ and $0 \leq i < m_{1} + m_{2} - n_{1}$, then
\begin{small}
\begin{align*}
{\bf x}(i, \hat{j}, n_{3}) &= {\bf a}_{3}\! +\! {\bf a}_{2}\! -\! {\bf a}_{1}\! +\! \left[(i\! +\! 2n_{1}\! -\! m_{2}\! -\! m_{1})\delta_{x}, (j\! +\! \tfrac{1}{2}\! -\! m_{3})\delta_{y}, 0\right]^{\top}\\
&= {\bf a}_{3}\! +\! \rho_{3}{\bf a}_{2}\! -\! \rho_{5}\rho_{4}{\bf a}_{1}\! +\! \psi_{2}{\bf a}_{1}\! -\! {\bf a}_{1}\! +\! {\bf x}\!\left(i\! +\! 2n_{1}\! -\! m_{1}\! -\! m_{2}, j\! +\! \tfrac{1}{2}\! -\! m_{3}, 0\right);
\end{align*}
\end{small}
otherwise, i.e. $m_{3} \leq j < n_{2}$ and $m_{1} + m_{2} - n_{1} \leq i < n_{1}$, then
\begin{small}
\begin{align*}
{\bf x}(i, \hat{j}, n_{3}) &= {\bf a}_{3}\! +\! {\bf a}_{2}\! +\! \left[(i\! +\! n_{1}\! -\! m_{1}\! -\! m_{2})\delta_{x}, (j\! +\! \tfrac{1}{2}\! -\! m_{3})\delta_{y}, 0\right]^{\top}\\
&= {\bf a}_{3}\! +\! \rho_{3}{\bf a}_{2}\! -\! \rho_{5}\rho_{4}{\bf a}_{1}\! +\! \psi_{2}{\bf a}_{1}\! +\! {\bf x}\!\left(i\! +\! n_{1}\! -\! m_{1}\! -\! m_{2}, j\! +\! \tfrac{1}{2}\! -\! m_{3}, 0\right).
\end{align*}
\end{small}
\item[(iii)] $\cos\theta_{\gamma} < 0$, $\cos\theta_{\beta} \geq 0$ ($\rho_{2} = 0$, $\rho_{4} = 0$, $\rho_{5} = 1$, $\psi_{2} = 0$)\\
If $0 \leq j < m_{3}$ and $0 \leq i < m_{2}$, then
\begin{align*}
{\bf x}(i, \hat{j}, n_{3}) &= {\bf a}_{3}\! -\! {\bf a}_{1}\! +\! \left[(i\! +\! n_{1}\! -\! m_{2})\delta_{x}, (j\! +\! \tfrac{1}{2}\! +\! n_{2}\! -\! m_{3})\delta_{y}, 0\right]^{\top}\\
&= {\bf a}_{3}\! +\! \rho_{3}{\bf a}_{2}\! -\! {\bf a}_{2}\! +\! \rho_{2}{\bf a}_{1}\! -\! {\bf a}_{1}\! +\! {\bf x}\!\left(i\! +\! n_{1}\! -\! m_{2}, j\! +\! \tfrac{1}{2}\! +\! n_{2}\! -\! m_{3}, 0\right);
\end{align*}
if $0 \leq j < m_{3}$ and $m_{2} \leq i < n_{1}$, then
\begin{align*}
{\bf x}(i, \hat{j}, n_{3}) &= {\bf a}_{3}\! +\! \left[(i\! -\! m_{2})\delta_{x}, (j\! +\! \tfrac{1}{2}\! +\! n_{2}\! -\! m_{3})\delta_{y}, 0\right]^{\top}\\
&= {\bf a}_{3}\! +\! \rho_{3}{\bf a}_{2}\! -\! {\bf a}_{2}\! +\! \rho_{2}{\bf a}_{1}\! +\! {\bf x}\!\left(i\! -\! m_{2}, j\! +\! \tfrac{1}{2}\! +\! n_{2}\! -\! m_{3}, 0\right);
\end{align*}
if $m_{3} \leq < n_{2}$ and $0 \leq i < m_{1} + m_{2} - n_{1}$, then
\begin{small}
\begin{align*}
{\bf x}(i, \hat{j}, n_{3}) &= {\bf a}_{3}\! +\! {\bf a}_{2}\! +\! \left[(i\! -\! m_{2}\! -\! (m_{1}\! -\! n_{1}))\delta_{x}, (j\! +\! \tfrac{1}{2}\! -\! m_{3})\delta_{y}, 0\right]^{\top}\\
&= {\bf a}_{3}\! +\! {\bf a}_{2}\! -\! {\bf a}_{1}\! +\! \left[(i\! +\! 2n_{1}\! -\! m_{1}\! -\! m_{2})\delta_{x}, (j\! +\! \tfrac{1}{2}\! -\! m_{3})\delta_{y}, 0\right]^{\top}\\
&= {\bf a}_{3}\! +\! \rho_{3}{\bf a}_{2}\! -\! \rho_{5}\rho_{4}{\bf a}_{1}\! +\! \psi_{2}{\bf a}_{1}\! -\! {\bf a}_{1}\! +\! {\bf x}\!\left(i\! +\! 2n_{1}\! -\! m_{1}\! -\! m_{2}, j\! +\! \tfrac{1}{2}\! -\! m_{3}, 0\right);
\end{align*}
\end{small}
otherwise, i.e. $m_{3} \leq j < n_{2}$ and $m_{1} + m_{2} - n_{1} \leq i < n_{1}$, then
\begin{small}
\begin{align*}
{\bf x}(i, \hat{j}, n_{3}) &= {\bf a}_{3}\! +\! {\bf a}_{2}\! +\! \left[(i\! +\! n_{1}\! -\! m_{1}\! -\! m_{2})\delta_{x}, (j\! +\! \tfrac{1}{2}\! -\! m_{3})\delta_{y}, 0\right]^{\top}\\
&= {\bf a}_{3}\! +\! \rho_{3}{\bf a}_{2}\! -\! \rho_{5}\rho_{4}{\bf a}_{1}\! +\! \psi_{2}{\bf a}_{1}\! +\! {\bf x}\!\left(i\! +\! n_{1}\! -\! m_{1}\! -\! m_{2}, j\! +\! \tfrac{1}{2}\! -\! m_{3}, 0\right).
\end{align*}
\end{small}
\item[(iv)] $\cos\theta_{\gamma} < 0$, $\cos\theta_{\beta} < 0$ ($\rho_{2} = 1$, $\rho_{4} = 1$, $\rho_{5} = 0$, $\psi_{2} = 1$)\\
If $0 \leq j < m_{3}$ and $0 \leq i < m_{2}$, then
\begin{align*}
{\bf x}(i, \hat{j}, n_{3}) &= {\bf a}_{3}\! +\! \left[(i\! -\! m_{2}\! +\! n_{1})\delta_{x}, (j\! +\! \tfrac{1}{2}\! +\! n_{2}\! -\! m_{3})\delta_{y}, 0\right]^{\top}\\
&= {\bf a}_{3}\! +\! \rho_{3}{\bf a}_{2}\! -\! {\bf a}_{2}\! +\! \rho_{2}{\bf a}_{1}\! -\! {\bf a}_{1}\! +\! {\bf x}\!\left(i\! +\! n_{1}\! -\! m_{2}, j\! +\! \tfrac{1}{2}\! +\! n_{2}\! -\! m_{3}, 0\right);
\end{align*}
if $0 \leq j < m_{3}$ and $m_{2} \leq i < n_{1}$, then
\begin{align*}
{\bf x}(i, \hat{j}, n_{3}) &= {\bf a}_{3}\! +\! {\bf a}_{1}\! +\! \left[(i\! -\! m_{2})\delta_{x}, (j\! +\! \tfrac{1}{2}\! +\! n_{2}\! -\! m_{3})\delta_{y}, 0\right]^{\top}\\
&= {\bf a}_{3}\! +\! \rho_{3}{\bf a}_{2}\! -\! {\bf a}_{2}\! +\! \rho_{2}{\bf a}_{1}\! +\! {\bf x}\!\left(i\! -\! m_{2}, j\! +\! \tfrac{1}{2}\! +\! n_{2}\! -\! m_{3}, 0\right);
\end{align*}
if $m_{3} \leq j < n_{2}$ and $0 \leq i < m_{1} + m_{2} - n_{1}$, then
\begin{small}
\begin{align*}
{\bf x}(i, \hat{j}, n_{3}) &= {\bf a}_{3}\! +\! {\bf a}_{2}\! +\! \left[(i\! -\! (m_{2}\! -\! n_{1})\! -\! (m_{1}\! -\! n_{1}))\delta_{x}, (j\! +\! \tfrac{1}{2}\! -\! m_{3})\delta_{y}, 0\right]^{\top}\\
&= {\bf a}_{3}\! +\! \rho_{3}{\bf a}_{2}\! -\! \rho_{5}\rho_{4}{\bf a}_{1}\! +\! \psi_{2}{\bf a}_{1}\! -\! {\bf a}_{1}\! +\! {\bf x}\!\left(i\! +\! 2n_{1}\! -\! m_{1}\! -\! m_{2}, j\! +\! \tfrac{1}{2}\! -\! m_{3}, 0\right);
\end{align*}
\end{small}
otherwise, i.e. $m_{3} \leq j < n_{2}$ and $m_{1} + m_{2} - n_{1} \leq i < n_{1}$, then
\begin{small}
\begin{align*}
{\bf x}(i, \hat{j}, n_{3}) &= {\bf a}_{3}\! +\! {\bf a}_{2}\! +\! {\bf a}_{1}\! +\! \left[(i\! +\! n_{1}\! -\! m_{1}\! -\! m_{2})\delta_{x}, (j\! +\! \tfrac{1}{2}\! -\! m_{3})\delta_{y}, 0\right]^{\top}\\
&= {\bf a}_{3}\! +\! \rho_{3}{\bf a}_{2}\! -\! \rho_{5}\rho_{4}{\bf a}_{1}\! +\! \psi_{2}{\bf a}_{1}\! +\! {\bf x}\!\left(i\! +\! n_{1}\! -\! m_{1}\! -\! m_{2}, j\! +\! \tfrac{1}{2}\! -\! m_{3}, 0\right).
\end{align*}
\end{small}
\end{itemize}
The above four rewritten imply that
\begin{small}
\begin{align*}
E_{2}(i, \hat{j}, n_{3}) = \begin{cases}
e^{\imath 2\pi{\bf k}\cdot\left[{\bf a}_{3} + \rho_{3}{\bf a}_{2} - {\bf a}_{2} + \rho_{2}{\bf a}_{1} - {\bf a}_{1}\right]}E_{2}(i\! +\! n_{1}\! -\! m_{2}, \hat{j}\! +\! n_{2}\! -\! m_{3}, 0)\\
\hspace{6cm} \mbox{if } 0 \leq i < m_{2}, 0 \leq j < m_{3};\\
e^{\imath 2\pi{\bf k}\cdot\left[{\bf a}_{3} + \rho_{3}{\bf a}_{2} - {\bf a}_{2} + \rho_{2}{\bf a}_{1}\right]}E_{2}(i\! -\! m_{2}, \hat{j}\! +\! n_{2}\! -\! m_{3}, 0)\\
\hspace{6cm} \mbox{if } m_{2} \leq i < n_{1}, 0 \leq j < m_{3};\\
e^{\imath 2\pi{\bf k}\cdot\left[{\bf a}_{3} + \rho_{3}{\bf a}_{2} - (\rho_{5}\rho_{4} - \psi_{2}){\bf a}_{1} - {\bf a}_{1}\right]}E_{2}(i\! +\! 2n_{1}\! -\! m_{1}\! -\! m_{2}, \hat{j}\! -\! m_{3}, 0)\\
\hspace{4.5cm} \mbox{if } 0 \leq i < m_{1}\! +\! m_{2}\! -\! n_{1}, m_{3} \leq j < n_{2};\\
e^{\imath 2\pi{\bf k}\cdot\left[{\bf a}_{3} + \rho_{3}{\bf a}_{2} - (\rho_{5}\rho_{4} - \psi_{2}){\bf a}_{1}\right]}E_{2}(i\! +\! n_{1}\! -\! m_{1}\! -\! m_{2}, \hat{j}\! -\! m_{3}, 0)\\
\hspace{4.5cm} \mbox{if } m_{1}\! +\! m_{2}\! -\! n_{1} \leq i < n_{1}, m_{3} \leq j < n_{2}.
\end{cases}
\end{align*}
\end{small}
No matter in which case above, the periodicity shown in \eqref{eq:period3E2} can be obtained by consider $i = 0, 1, \cdots, n_{1} - 1$ and $j = 0, 1, \cdots, n_{2} - 1$. Similarly, we can prove the periodicity \eqref{eq:period3E1} using the same process.

\end{proof}

Consequently, the explicit representation of the discrete partial derivative matrix $C_{3}$ is of the form
\begin{align*}
C_{3} \equiv K_{3} \in \mathbb{C}^{n\times n}
\end{align*}
Which is shown in \eqref{eq:K3}. The matrix representations of \eqref{eq:partialz} are $C_{3}{\bf e}_{1}$ and $C_{3}{\bf e}_{2}$, respectively.

So far, we discretize $\nabla \times E = \imath\omega H$ to the matrix representation $C{\bf e} = \imath\omega{\bf h}$ where
\begin{align*}
C = \begin{bmatrix}
0 & -C_{3} & C_{2}\\
C_{3} & 0 & -C_{1}\\
-C_{2} & C_{1} & 0
\end{bmatrix}\in \mathbb{C}^{3n \times 3n}.
\end{align*}

\subsection{Matrix representation of $\nabla \times H = -\imath\omega \varepsilon E$}

In this subsection, we will continue to derive the matrix representation of $\nabla \times H = -\imath\omega\varepsilon E$. Similar to last subsection, we also devide this subsections to three parts. Since the idea of proof is similar to that of the previous subsection, we will omit all the proofs in this subsection.

\noindent{\bf{Part IV. Partial derivative with respect to $x$ for $H$}.} Using the Yee's scheme, the partial derivative $\partial_{x}H_{2}$ and $\partial_{x}H_{3}$ have the discretizations
\begin{align}\label{eq:partial_x_H}
\dfrac{H_{2}(\hat{i}, j, \hat{k}) - H_{2}(\hat{i}-1, j, \hat{k})}{\delta_{x}} \;\; \mbox{and}\;\; \dfrac{H_{3}(\hat{i}, \hat{j}, k) - H_{3}(\hat{i}-1, \hat{j}, k)}{\delta_{x}}
\end{align}
for $i = 0, 1, \cdots, n_{1} - 1$, $j = 0, 1, \cdots, n_{2} - 1$, $k = 0, 1, \cdots, n_{3} - 1$. 

\begin{theorem}
By the quasi-periodic condition \eqref{eq:quasi_periodic}, we have
\begin{align}\label{eq:partial1H2}
H_{2}(\widehat{-1}, j, \hat{k}) = \imath\omega\mu_{0} e^{-\imath2\pi{\bf k}\cdot{\bf a}_{1}}H_{2}(\hat{n}_{1}-1, j, \hat{k}) 
\end{align}
and
\begin{align}\label{eq:partial1H3}
H_{3}(\widehat{-1}, \hat{j}, k) = \imath\omega\mu_{0} e^{-\imath2\pi{\bf k}\cdot{\bf a}_{1}}H_{3}(\hat{n}_{1}-1, \hat{j}, k),
\end{align}
for $j = 0, 1, \cdots, n_{2}-1$ and $k = 0, 1, \cdots, n_{3}-1$.
\end{theorem}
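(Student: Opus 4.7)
The plan is to mirror the argument of Theorem \ref{thm:period1}, but now extending the field to the ghost layer at index $\widehat{-1}$ on the left of the primitive cell rather than at index $n_1$ on the right. Because the Yee grid placement for $H_2$ is $(\hat{i},j,\hat{k})$ and for $H_3$ is $(\hat{i},\hat{j},k)$, the half-integer index $\hat{i}=i+\tfrac12$ means that the ghost point $\widehat{-1}$ corresponds to the spatial location $-\tfrac12\delta_x$ on the $x$-axis.

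First I would write down the physical coordinate of the ghost point explicitly, using the definition of ${\bf x}(\cdot,\cdot,\cdot)$:
\[
{\bf x}(\widehat{-1}, j, \hat{k}) = \left[-\tfrac{1}{2}\delta_x,\; j\delta_y,\; (k+\tfrac{1}{2})\delta_z\right]^{\top}.
\]
Next, using ${\bf a}_1 = [\|{\bf a}_1\|, 0, 0]^\top = [n_1\delta_x, 0, 0]^\top$ from \eqref{eq:transvec}, I would recognize that this coordinate is the translate of an interior grid point by $-{\bf a}_1$:
\[
{\bf x}(\widehat{-1}, j, \hat{k}) = {\bf x}(\hat{n}_1 - 1, j, \hat{k}) - {\bf a}_1,
\]
since $\left(n_1-1+\tfrac12\right)\delta_x - n_1\delta_x = -\tfrac12\delta_x$, while the $y$ and $z$ components are unchanged. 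The identity \eqref{eq:partial1H3} for $H_3$ follows from the same computation applied to the grid $(\hat{i},\hat{j},k)$, where only the $x$-coordinate differs from the $H_2$ situation.

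Finally, I would apply the quasi-periodic condition \eqref{eq:quasi_periodic} with $\ell = 1$, rewritten as $H({\bf x}-{\bf a}_1) = e^{-\imath 2\pi{\bf k}\cdot{\bf a}_1}H({\bf x})$, to conclude
\[
H_2(\widehat{-1}, j, \hat{k}) = e^{-\imath 2\pi{\bf k}\cdot{\bf a}_1} H_2(\hat{n}_1-1, j, \hat{k}),
\]
and analogously for $H_3$. (I note in passing that the factor $\imath\omega\mu_0$ appearing on the right-hand side of \eqref{eq:partial1H2}--\eqref{eq:partial1H3} appears to be a typographical slip, as a purely kinematic periodicity relation cannot contain it; the argument above yields the identity without this factor.)

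There is no real obstacle here: unlike Theorem \ref{thm:period3}, where the interplay of $\theta_\alpha, \theta_\beta, \theta_\gamma$ and the integers $m_1, m_2, m_3$ forced an eight-fold case split, the translation along ${\bf a}_1$ is purely axial because of the isometric normalization in \eqref{eq:transvec} that places ${\bf a}_1$ on the positive $x$-axis. Hence no angle-dependent case analysis is needed, and the entire proof is a single coordinate identification followed by one invocation of the quasi-periodic condition.
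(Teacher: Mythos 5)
Your proof is correct and follows exactly the route the paper intends: the paper omits the proofs in this subsection, stating only that the idea is the same as for the electric-field case (Theorem \ref{thm:period1}), and your coordinate identification ${\bf x}(\widehat{-1},j,\hat{k}) = {\bf x}(\hat{n}_{1}-1,j,\hat{k}) - {\bf a}_{1}$ followed by one application of the quasi-periodic condition is precisely that argument transplanted to the ghost layer on the left of the cell. You are also right that the factor $\imath\omega\mu_{0}$ in \eqref{eq:partial1H2}--\eqref{eq:partial1H3} is a typographical artifact: a pure translation identity for $H$ cannot carry it, and it plays no role in the resulting matrix representation $-C_{1}^{\ast}{\bf h}_{2}$.
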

Therefore, the matrix representations of \eqref{eq:partial_x_H} are $-C_{1}^{\ast}{\bf h}_{2}$ and $-C_{1}^{\ast}{\bf h}_{3}$.

\noindent{\bf{Part V. Partial derivative with respect to $y$ for $H$}.} Using the Yee's scheme, the partial derivative $\partial_{y}H_{1}$ and $\partial_{y}H_{3}$ have the discretizations
\begin{align}\label{eq:partial_y_H}
\dfrac{H_{1}(i, \hat{j}, \hat{k}) - H_{1}(i, \hat{j}-1, \hat{k})}{\delta_{y}} \;\; \mbox{and}\;\; \dfrac{H_{3}(\hat{i}, \hat{j}, k) - H_{3}(\hat{i}, \hat{j}-1, k)}{\delta_{y}}
\end{align}
for $i = 0, 1, \cdots, n_{1} - 1$, $j = 0, 1, \cdots, n_{2} - 1$, $k = 0, 1, \cdots, n_{3} - 1$. 

\begin{theorem}
By the quasi-periodic condition \eqref{eq:quasi_periodic}, we have
\begin{align}\label{eq:partial2H1}
H_{1}(0:n_{1}-1, \widehat{-1}, \hat{k}) = \imath\omega\mu_{0} e^{-\imath2\pi{\bf k}\cdot{\bf a}_{2}}J_{2}^{\ast}H_{1}(0:n_{1}-1, \hat{n}_{2}-1, \hat{k}) 
\end{align}
and
\begin{align}\label{eq:partial2H3}
H_{3}(\hat{0}:\hat{n}_{1}-1, \widehat{-1}, k) = \imath\omega\mu_{0} e^{-\imath2\pi{\bf k}\cdot{\bf a}_{2}}J_{2}^{\ast}H_{3}(\hat{0}:\hat{n}_{1}-1, \hat{n}_{2}-1, k),
\end{align}
for $k = 0, 1, \cdots, n_{3}-1$.
\end{theorem}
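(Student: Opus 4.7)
The plan is to mirror the proof of Theorem~\ref{thm:period2}, but to run the quasi-periodic translation along $-{\bf a}_2$ instead of $+{\bf a}_2$. The boundary point $(i,\widehat{-1},\hat{k})$ sits one half-row below the fundamental cell, at physical position $[i\delta_x,-\tfrac12\delta_y,(k+\tfrac12)\delta_z]^\top$. Adding ${\bf a}_2$ lifts its $y$-coordinate to $(n_2-\tfrac12)\delta_y$, i.e.\ onto the $\hat{y}$-row with index $\hat{n}_2-1$, while simultaneously shifting the $x$-coordinate by $\|{\bf a}_2\|\cos\theta_\gamma$. By the definition of $m_1$ this $x$-shift equals $m_1\delta_x$ (if $\cos\theta_\gamma\ge 0$) or $(m_1-n_1)\delta_x$ (if $\cos\theta_\gamma<0$), precisely the inverse of the shift used in Theorem~\ref{thm:period2}.

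Concretely, I would split into the cases $0<\theta_\gamma\le \pi/2$ ($\rho_1=0$) and $\pi/2<\theta_\gamma<\pi$ ($\rho_1=1$). In each case, for each $i=0,\dots,n_1-1$, I would write
\[
{\bf x}(i,\widehat{-1},\hat{k})\;=\;-{\bf a}_2-\rho_1{\bf a}_1+s\,{\bf a}_1+{\bf x}(i',\hat{n}_2-1,\hat{k}),
\]
where $i'\equiv i+m_1\pmod{n_1}$ is the new $x$-index (forced into $\{0,\dots,n_1-1\}$) and $s\in\{0,1\}$ is the corresponding wraparound correction in ${\bf a}_1$. The quasi-periodic condition \eqref{eq:quasi_periodic} then gives the scalar identity
\[
H_1(i,\widehat{-1},\hat{k})\;=\;e^{-\imath 2\pi{\bf k}\cdot({\bf a}_2+\rho_1{\bf a}_1-s{\bf a}_1)}\,H_1(i',\hat{n}_2-1,\hat{k}),
\]
and the same identity with $H_3$ in place of $H_1$ (the argument is insensitive to whether the $z$-index carries a hat, since no $z$-translation is involved).

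Next I would assemble these $n_1$ scalar identities into the single matrix equation \eqref{eq:partial2H1}. The index permutation $i\mapsto i'$ is, by construction, the inverse of the cyclic shift that defines $J_2$ in \eqref{eq:J2}, and the accompanying phase factors are exactly the complex conjugates of those appearing in $J_2$. Because $J_2$ is a phase-weighted cyclic permutation and hence unitary, $J_2^{-1}=J_2^{\ast}$, so the assembled boundary operator is precisely $e^{-\imath 2\pi{\bf k}\cdot{\bf a}_2}J_2^{\ast}$. Relation \eqref{eq:partial2H3} follows by repeating the argument for $H_3$.

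The main obstacle is not conceptual but bookkeeping: one must place the stray phase factor $e^{-\imath 2\pi{\bf k}\cdot{\bf a}_1}$ coming from the $s=1$ wraparound into the correct block row of the assembled matrix, so that the result is genuinely the conjugate transpose of $J_2$ rather than some off-by-one variant. As in the unification of \eqref{eq:acuteJ2}--\eqref{eq:obtuseJ2} into \eqref{eq:J2}, this is most cleanly handled by carrying the indicator $\rho_1$ throughout the calculation and verifying at the end that the two cases collapse to a single formula. Once that is done, the theorem follows exactly as in the last line of the proof of Theorem~\ref{thm:period2}, by letting $i$ range over $\{0,\dots,n_1-1\}$.
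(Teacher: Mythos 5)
Your proposal is correct and is essentially the argument the paper intends: the paper omits all proofs in this subsection, saying only that they mirror those of the preceding one, and your adaptation of Theorem~\ref{thm:period2} (translating the ghost row by $+{\bf a}_2$, tracking the $m_1$-shift and the $\rho_1$/wraparound phases, and identifying the assembled operator with $J_2^{-1}=J_2^{\ast}$ by unitarity) is exactly that intended proof. Note only that the factor $\imath\omega\mu_0$ appearing in \eqref{eq:partial2H1}--\eqref{eq:partial2H3} is a typo in the paper's statement: the quasi-periodic condition \eqref{eq:quasi_periodic} produces no such factor, and your derivation correctly yields the relation without it.
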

Therefore, the matrix representations of \eqref{eq:partial_y_H} are $-C_{2}^{\ast}{\bf h}_{1}$ and $-C_{2}^{\ast}{\bf h}_{3}$.

\noindent{\bf{Part VI. Partial derivative with respect to $z$ for $H$}.} Using the Yee's scheme, the partial derivative $\partial_{z}H_{1}$ and $\partial_{z}H_{2}$ have the discretizations
\begin{align}\label{eq:partial_z_H}
\dfrac{H_{1}(i, \hat{j}, \hat{k}) - H_{1}(i, \hat{j}, \hat{k}-1)}{\delta_{z}} \;\; \mbox{and}\;\; \dfrac{H_{2}(\hat{i}, j, \hat{k}) - H_{2}(\hat{i}, j, \hat{k}-1)}{\delta_{z}}
\end{align}
for $i = 0, 1, \cdots, n_{1} - 1$, $j = 0, 1, \cdots, n_{2} - 1$, $k = 0, 1, \cdots, n_{3} - 1$. 

\begin{theorem}
By the quasi-periodic condition \eqref{eq:quasi_periodic}, we have
\begin{small}
\begin{align}\label{eq:partial3H1}
\textup{vec}\big(H_{1}(0:n_{1}\!\!-\!\!1, \hat{0}:\hat{n}_{2}\!\!-\!\!1, \widehat{-1})\big) = \imath\omega\mu_{0} e^{-\imath2\pi{\bf k}\cdot{\bf a}_{3}}J_{3}^{\ast}\textup{vec}\big(H_{1}(0:n_{1}\!\!-\!\!1, \hat{0}:\hat{n}_{2}\!\!-\!\!1, \hat{n}_{3}\!\!-\!\!1)\big) 
\end{align}
\end{small}
and
\begin{small}
\begin{align}\label{eq:partial3H2}
\textup{vec}\big(H_{2}(\hat{0}:\hat{n}_{1}\!\!-\!\!1, 0:n_{2}\!\!-\!\!1, \widehat{-1})\big) = \imath\omega\mu_{0} e^{-\imath2\pi{\bf k}\cdot{\bf a}_{3}}J_{3}^{\ast}\textup{vec}\big(H_{2}(\hat{0}:\hat{n}_{1}\!\!-\!\!1, 0:n_{2}\!\!-\!\!1, \hat{n}_{3}\!\!-\!\!1)\big).
\end{align}
\end{small}
\end{theorem}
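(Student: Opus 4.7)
The plan is to invoke the $H$-field analogue of Theorem~\ref{thm:period3} together with the unitarity of $J_{3}$, rather than redo its lengthy case analysis from scratch. The proof of Theorem~\ref{thm:period3} used only the quasi-periodic condition \eqref{eq:quasi_periodic} together with the geometry of the lattice translation vectors; the quasi-periodicity \eqref{eq:quasi_periodic_mag} satisfied by $H$ is identical in form, so the very same case analysis yields
\begin{equation*}
\textup{vec}\big(H_{1}(0{:}n_{1}{-}1,\hat{0}{:}\hat{n}_{2}{-}1,\hat{n}_{3}{-}1)\big) \;=\; e^{\imath 2\pi{\bf k}\cdot{\bf a}_{3}}\, J_{3}\,\textup{vec}\big(H_{1}(0{:}n_{1}{-}1,\hat{0}{:}\hat{n}_{2}{-}1,\widehat{-1})\big).
\end{equation*}
The key observation is $\widehat{-1}+n_{3}=\widehat{n_{3}-1}$, i.e.\ shifting the Yee index $\widehat{-1}$ upward by one period ${\bf a}_{3}$ lands on $\widehat{n_{3}-1}$ after compensating integer shifts along ${\bf a}_{1}$ and ${\bf a}_{2}$. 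This is precisely the periodicity relation of Theorem~\ref{thm:period3}, translated down by one unit in the third index.

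Next I would invert the displayed relation. A direct inspection of \eqref{eq:acuteJ3}--\eqref{eq:obtuseJ3} across all eight subcases of \eqref{eq:J3case16} shows that $J_{3}$ is a signed permutation matrix: exactly one nonzero entry per row and per column, each of unit modulus. Therefore $J_{3}$ is unitary, so $J_{3}^{-1}=J_{3}^{\ast}$, and multiplying the above identity on the left by $e^{-\imath 2\pi{\bf k}\cdot{\bf a}_{3}}J_{3}^{\ast}$ gives \eqref{eq:partial3H1}. The argument for \eqref{eq:partial3H2} is essentially the same: the magnetic-field grid for $H_{2}$ differs from that for $H_{1}$ by swapping which of $x,y$ carries the half-integer offset, and the periodic identification in the ${\bf a}_{3}$-direction is insensitive to this swap, so the same $J_{3}$ reappears.

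The alternative direct route would mirror the proof of Theorem~\ref{thm:period3}: for each combinatorial case indexed by the signs of $\cos\theta_{\gamma}$, $\cos\theta_{\beta}$, $\cos\theta_{\alpha}-\cos\theta_{\gamma}\cos\theta_{\beta}$ and the auxiliary comparisons $m_{1}\lessgtr m_{2}$, $m_{1}+m_{2}\lessgtr n_{1}$, one solves
\begin{equation*}
{\bf x}(i,\hat{j},\widehat{-1}) \;=\; -{\bf a}_{3}+c_{1}{\bf a}_{1}+c_{2}{\bf a}_{2}+{\bf x}(i',\hat{j}',\widehat{n_{3}-1})
\end{equation*}
for integers $c_{1},c_{2}$ and grid indices $i',j'$, and then applies \eqref{eq:quasi_periodic_mag} to read off the resulting phase. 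Collecting the entries reproduces $e^{-\imath 2\pi{\bf k}\cdot{\bf a}_{3}}J_{3}^{\ast}$. The main obstacle in that direct route is the combinatorial bookkeeping across all sixteen subcases, which is exactly what the unitarity shortcut avoids.
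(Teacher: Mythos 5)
The paper gives no proof of this theorem: the authors state at the start of the subsection that the arguments are analogous to those for the electric field and omit them, so the intended proof is a re-run of the case analysis of Theorem~\ref{thm:period3} for the Yee grid points of $H_{1}$ and $H_{2}$. Your primary route is genuinely different and cleaner. You observe that $\widehat{-1}+n_{3}=\widehat{n_{3}-1}$, so the already-established forward periodicity (with $H$ in place of $E$, which is legitimate because \eqref{eq:quasi_periodic_mag} has exactly the form of \eqref{eq:quasi_periodic} and the case analysis in Theorem~\ref{thm:period3} only ever wraps the first two indices, never the third) yields $\textup{vec}\big(H_{1}(\cdot,\cdot,\widehat{n_{3}-1})\big)=e^{\imath 2\pi{\bf k}\cdot{\bf a}_{3}}J_{3}\,\textup{vec}\big(H_{1}(\cdot,\cdot,\widehat{-1})\big)$; you then invert using the fact that $J_{3}$, by inspection of \eqref{eq:acuteJ3}--\eqref{eq:obtuseJ3}, is a generalized permutation matrix with unit-modulus entries, hence unitary, so $J_{3}^{-1}=J_{3}^{\ast}$. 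This buys two things the omitted direct proof would not make explicit: it avoids repeating the sixteen subcases, and it explains structurally why the conjugate transpose $J_{3}^{\ast}$ is the matrix that must appear (the same remark applies to $J_{2}^{\ast}$ in the companion theorem). Your observation that the half-integer offsets of the $H_{1}$ and $H_{2}$ grids ride along without affecting which $J_{3}$ appears is also consistent with how the paper treats $E_{1}$ versus $E_{2}$ in Theorem~\ref{thm:period3}. One discrepancy you should flag rather than silently absorb: your derivation (correctly) produces the identity \emph{without} the factor $\imath\omega\mu_{0}$ that appears on the right-hand sides of \eqref{eq:partial3H1} and \eqref{eq:partial3H2}. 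A relation obtained purely from the quasi-periodic condition \eqref{eq:quasi_periodic_mag} cannot contain $\imath\omega\mu_{0}$; that factor is a typographical artifact propagated through all the $H$-field periodicity theorems in the paper, not a term your argument fails to account for.
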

Therefore, the matrix representations of \eqref{eq:partial_z_H} are $-C_{3}^{\ast}{\bf h}_{1}$ and $-C_{3}^{\ast}{\bf h}_{2}$.

Hence we discretize $\nabla \times H = -\imath\omega\varepsilon E$ to the matrix representation\\ $C^{\ast}{\bf h} = -\imath\omega B{\bf e}$. Until now, we propose the discretizations of curl operators for various lattice structures and list systematically 4 general forms that can be suitable for any situations. From now on, the governing equation \eqref{eq:double_curl} becomes a generalized eigenvalue problem
\begin{align*}
C^{\ast}C{\bf e} \equiv A{\bf e} = \lambda B{\bf e},
\end{align*}
where $\lambda = \mu_{0}\omega^{2}$, our main goal is to develop a fast algorithm to solve this problem.


\section{Eigen-decomposition of the partial derivative operators}\label{section4}

Before we compute the generalized eigenvalue problem $A{\bf e} = \lambda B{\bf e}$, we have to derive eigen-decompositions of the partial derivative operators $C_{1}$, $C_{2}$, and $C_{3}$. This section will be categorized into two parts: first we derive the eigen-decompositions of the differential operators $K_{1}$, $K_{2}$, and $K_{3}$, then use these results to get the eigen-decompositions of $C_{1}$, $C_{2}$, and $C_{3}$.


\subsection{Eigen-decomposition of the differential operators}

From the last section we know that the matrix representations of $C_{1}$ is
\begin{align*}
C_{1} = I_{n_{3}} \otimes I_{n_{2}} \otimes K_{1}.
\end{align*}
Here,
\begin{align*}
K_{1} = \dfrac{1}{\delta_{x}}\begin{bmatrix}
-1 & 1 & & & \\
 & -1 & 1 &  & \\
 & & \ddots & \ddots & \\
 & & & -1 & 1\\
e^{\imath 2\pi {\bf k}\cdot{\bf a}_{1}} & & & & -1 
\end{bmatrix}
\end{align*}
is a discrete matrix of differential operator with quasi-periodic boundary condition in one-dimensional space.

\begin{theorem}\label{thm:eigK1}
Let $K_{1}$ be a differential matrix with quasi-periodic boundary consition, then the $i$-th eigenvalue of $K_{1}$ is 
\begin{align}
\lambda_{i} = \dfrac{e^{\theta_{i}}-1}{\delta_{x}} \hspace{1em}\mbox{with}\hspace{1em}\theta_{i} = \dfrac{\imath 2\pi(i + {\bf k}\cdot{\bf a}_{1})}{n_{1}},\label{eq:ewK1}
\end{align}
and the corresponding $i$-th eigenvector is
\begin{align}
\mbox{\bf x}_{i} &= \begin{bmatrix}
1 & e^{\theta_{i}} & \cdots & e^{(n_{1} - 1)\theta_{i}}
\end{bmatrix}^{\top},\label{eq:evK1}
\end{align}
for $i = 1, \cdots, n_{1}$.
\end{theorem}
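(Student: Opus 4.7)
The plan is to verify directly that each proposed pair $(\lambda_i, \mathbf{x}_i)$ satisfies $K_1 \mathbf{x}_i = \lambda_i \mathbf{x}_i$, and then invoke a Vandermonde argument to conclude that the $n_1$ eigenvectors span $\mathbb{C}^{n_1}$, so that \eqref{eq:ewK1} gives the full spectrum.

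First I would write the $j$-th entry of $\mathbf{x}_i$ as $e^{(j-1)\theta_i}$ and read off $K_1$ row by row. For the rows $j = 1, \ldots, n_1-1$, the non-periodic rows, the product is simply
\begin{align*}
\tfrac{1}{\delta_x}\bigl(-e^{(j-1)\theta_i} + e^{j\theta_i}\bigr) = \tfrac{e^{\theta_i} - 1}{\delta_x}\, e^{(j-1)\theta_i} = \lambda_i\, [\mathbf{x}_i]_j,
\end{align*}
which is exactly what is needed. The only non-routine check is the last row, where one must verify
\begin{align*}
\tfrac{1}{\delta_x}\bigl(e^{\imath 2\pi \mathbf{k}\cdot \mathbf{a}_1} - e^{(n_1-1)\theta_i}\bigr) = \lambda_i\, e^{(n_1-1)\theta_i}.
\end{align*}
Clearing $\delta_x$ and rearranging, this reduces to the identity $e^{n_1 \theta_i} = e^{\imath 2\pi \mathbf{k}\cdot \mathbf{a}_1}$, which follows directly from the definition $\theta_i = \imath 2\pi(i + \mathbf{k}\cdot \mathbf{a}_1)/n_1$ because $e^{\imath 2\pi i} = 1$ for any integer $i$. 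This step (matching the quasi-periodic corner entry with the $n_1$-th power of $e^{\theta_i}$) is the only place the boundary condition enters, and it is the step that fixes the allowed frequencies $\theta_i$; I expect this to be the conceptually central point, though computationally it is just one line.

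Having produced $n_1$ eigenpairs, I would close the proof by showing that $\{\mathbf{x}_1, \ldots, \mathbf{x}_{n_1}\}$ is linearly independent. Setting $\omega_i = e^{\theta_i}$, the matrix $[\mathbf{x}_1, \ldots, \mathbf{x}_{n_1}]$ is the Vandermonde matrix with nodes $\omega_1, \ldots, \omega_{n_1}$. Since $\theta_i - \theta_{i'} = \imath 2\pi(i-i')/n_1$, the nodes $\omega_i$ are the $n_1$ distinct $n_1$-th roots of $e^{\imath 2\pi \mathbf{k}\cdot \mathbf{a}_1}$, hence pairwise distinct, and the Vandermonde matrix is nonsingular. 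Therefore the spectrum of $K_1$ is exhausted by \eqref{eq:ewK1} and \eqref{eq:evK1}, completing the proof. Everything is elementary; the only thing to be careful about is keeping the indexing convention of the theorem consistent and interpreting the corner entry via the quasi-periodic condition.
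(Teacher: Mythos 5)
Your proof is correct and is essentially the verification the paper has in mind: its entire proof reads ``The proof can be done straightforwardly by observation,'' and your row-by-row check together with the corner identity $e^{n_1\theta_i}=e^{\imath 2\pi\mathbf{k}\cdot\mathbf{a}_1}$ is exactly that observation made explicit. The closing Vandermonde argument for linear independence of the $\mathbf{x}_i$ is a welcome extra that the paper omits, and it is valid since the nodes $e^{\theta_i}$ are the $n_1$ distinct $n_1$-th roots of $e^{\imath 2\pi\mathbf{k}\cdot\mathbf{a}_1}$.
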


\begin{proof}
The proof can be done straightforwardly by observation.
\end{proof}

\begin{theorem}[eigenpairs of $K_2$]\label{thm:eigK2}
The eigenpairs of $K_{2}$ which is mentioned in \eqref{eq:K2} can be written as $(\delta_{y}^{-1}(e^{\theta_{i, j}}-1), \mbox{\bf y}_{i, j}\otimes\mbox{\bf x}_{i})$, where ${\bf x}_{i}$ is of the form as \eqref{eq:evK1} and
\begin{align}
\theta_{i, j} &= \dfrac{\imath 2\pi\left(j - \frac{m_{1}}{n_{1}}i + {\bf  k}\cdot \hat{{\bf a}}_{2}\right)}{n_{2}} \hspace{1em} with \hspace{1em} \hat{{\bf a}}_{2} = {\bf a}_{2} - \frac{m_{1}}{n_{1}}{\bf a}_{1} + \rho_{1}{\bf a}_{1},\label{eq:ewK2}\\
\mbox{\bf y}_{i, j} &= \begin{bmatrix}
1 & e^{\theta_{i, j}} & \cdots &  e^{(n_{2} - 1)\theta_{i, j}}
\end{bmatrix}^{\top}\label{eq:evK2},
\end{align}
for $i = 1, \cdots, n_{1}$, and $j = 1, \cdots, n_{2}$.
\end{theorem}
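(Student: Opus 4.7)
The plan is to use the tensor-product ansatz $\mathbf{v}_{i,j} = \mathbf{y}_{i,j} \otimes \mathbf{x}_i$, where $\mathbf{x}_i$ is the $K_1$-eigenvector from Theorem \ref{thm:eigK1}. The motivation is the block structure of $K_2$: ignoring the coupling through $J_2$, the matrix $K_2$ is essentially a Kronecker sum of a cyclic shift in the $n_2$ direction and the identity on the $n_1$ direction, so its eigenvectors should factor as a product of a one-dimensional geometric vector and an eigenvector of whatever acts on the first factor. The wrinkle is that the coupling in the bottom-left block is not a pure scalar phase but the twisted permutation $J_2$, and the tensor ansatz only works if $\mathbf{x}_i$ happens to be an eigenvector of $J_2$.

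Thus the first step I would take is to show $J_2 \mathbf{x}_i = \mu_i \mathbf{x}_i$ for a scalar $\mu_i$. Applying $J_2$ to $\mathbf{x}_i$ directly, I get two blocks of entries: the top $m_1$ coordinates of $J_2\mathbf{x}_i$ read $e^{\imath 2\pi(\rho_1 - 1)\mathbf{k}\cdot\mathbf{a}_1}\,e^{(n_1 - m_1 + k - 1)\theta_i}$ for $k = 1,\ldots,m_1$, while the bottom $n_1 - m_1$ coordinates read $e^{\imath 2\pi\rho_1\mathbf{k}\cdot\mathbf{a}_1}\,e^{(k - m_1 - 1)\theta_i}$. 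Matching each expression with $\mu_i e^{(k-1)\theta_i}$ forces the two candidate scalars to agree, and consistency reduces precisely to $e^{n_1\theta_i} = e^{\imath 2\pi\mathbf{k}\cdot\mathbf{a}_1}$, which is immediate from the formula \eqref{eq:ewK1}. This yields the clean expression $\mu_i = e^{\imath 2\pi\rho_1\mathbf{k}\cdot\mathbf{a}_1}\,e^{-m_1\theta_i}$.

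Next I would substitute the ansatz into $K_2 \mathbf{v}_{i,j} = \lambda \mathbf{v}_{i,j}$ and read off the block equations. Writing $\mathbf{y}_{i,j} = (y_1,\ldots,y_{n_2})^\top$, rows $1,\ldots,n_2-1$ give the scalar recursion $y_{k+1} - y_k = \lambda\delta_y\, y_k$, and the last block gives the twisted boundary relation $e^{\imath 2\pi\mathbf{k}\cdot\mathbf{a}_2}\mu_i y_1 - y_{n_2} = \lambda\delta_y\, y_{n_2}$, using the scalar property $J_2\mathbf{x}_i = \mu_i \mathbf{x}_i$ from the previous step. Setting $1 + \lambda\delta_y = e^{\theta}$ forces $y_k = e^{(k-1)\theta}y_1$ and imposes $e^{n_2\theta} = e^{\imath 2\pi\mathbf{k}\cdot\mathbf{a}_2}\mu_i$. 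Solving the latter gives $n_2$ distinct roots parameterized by $j$, producing the formula
\[
\theta_{i,j} = \frac{\imath 2\pi\bigl(j - \tfrac{m_1}{n_1} i + \mathbf{k}\cdot\hat{\mathbf{a}}_2\bigr)}{n_2}, \qquad \hat{\mathbf{a}}_2 = \mathbf{a}_2 - \tfrac{m_1}{n_1}\mathbf{a}_1 + \rho_1\mathbf{a}_1,
\]
after absorbing $-m_1\theta_i/n_2 = -(\imath 2\pi/n_2)(m_1/n_1)(i + \mathbf{k}\cdot\mathbf{a}_1)$ into the exponent and regrouping the $\mathbf{k}\cdot\mathbf{a}_1$ piece. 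The resulting eigenvalue is $\delta_y^{-1}(e^{\theta_{i,j}} - 1)$, and the eigenvector is $\mathbf{y}_{i,j}\otimes\mathbf{x}_i$ with $\mathbf{y}_{i,j}$ as in \eqref{eq:evK2}. Finally, since the $n_1 n_2$ pairs $(i,j)$ yield distinct eigenvectors by the linear independence of the $\mathbf{x}_i$'s and the geometric vectors $\mathbf{y}_{i,j}$, the list is complete.

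The only nontrivial step is the first one: verifying that the mixed phase factor in $J_2$, together with the particular exponent in $\mathbf{x}_i$, collapses into a single scalar. Everything afterwards is a standard separation-of-variables argument for a circulant-type matrix, and the bookkeeping needed to reassemble $\hat{\mathbf{a}}_2$ from $\mathbf{a}_2$, $\mathbf{a}_1$, $m_1/n_1$, and $\rho_1$ is the only place where one must be careful with signs.
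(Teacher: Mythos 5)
Your proposal is correct and follows essentially the same route as the paper: the paper's equations \eqref{eq:y30} are precisely your two candidate scalars for $J_{2}\mathbf{x}_{i}$, and the paper's substitution of $\theta_{i}$ to show they coincide is exactly your consistency check $e^{n_{1}\theta_{i}} = e^{\imath 2\pi\mathbf{k}\cdot\mathbf{a}_{1}}$. Packaging the key step as ``$\mathbf{x}_{i}$ is an eigenvector of $J_{2}$ with eigenvalue $\mu_{i}$'' is a cleaner way to organize the same computation, and your closing remark on completeness via linear independence is a small addition the paper leaves implicit.
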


\begin{proof}
Assume that $(\lambda, [y_{1}, \cdots, y_{n_{2}}]^{\top}\otimes {\bf x}_{i})$ is one of the eigenpairs of $K_{2}$. It satisfies that
\begin{subequations}
\begin{align}
y_{2} - y_{1} &= \lambda\delta_{y}y_{1},\label{eq:y1}\\
&\vdots\notag \\
y_{n_{2}} - y_{n_{2}-1} &= \lambda\delta_{y}y_{n_{2}-1},\label{eq:y2}\\
y_{1}e^{\imath 2\pi {\bf k}\cdot{\bf a}_{2}}J_{2}{\bf x}_{i} - y_{n_{2}}{\bf x}_{i} &= \lambda\delta_{y}y_{n_{2}}{\bf x}_{i}.\label{eq:y3}
\end{align}
\end{subequations}
With the definition of $J_{2}$ which is given in \eqref{eq:J2}, we know that equation \eqref{eq:y3} implies that
\begin{subequations}\label{eq:y30}
\begin{align}
y_{1}e^{\imath 2\pi{\bf k}\cdot{\bf a}_{2}}e^{\imath 2\pi\rho_{1}{\bf k}\cdot{\bf a}_{1}}e^{-\imath 2\pi{\bf k}\cdot{\bf a}_{1}}e^{(n_{1} - m_{1})\theta_{i}} - y_{n_{2}} &= \lambda\delta_{y}y_{n_{2}},\\
y_{1}e^{\imath 2\pi{\bf k}\cdot{\bf a}_{2}}e^{\imath 2\pi\rho_{1}{\bf k}\cdot{\bf a}_{1}}e^{-m_{1}\theta_{i}} - y_{n_{2}} &= \lambda\delta_{y}y_{n_{2}}.
\end{align}
\end{subequations}
Substitute $\theta_{i}$ which is mentioned in \eqref{eq:ewK1} into \eqref{eq:y30}, it show that both of the  equations of \eqref{eq:y30} will become
\begin{align}\label{eq:y300}
y_{1}e^{\imath 2\pi({\bf k}\cdot\hat{\bf a}_{2} - \frac{m_{1}}{n_{1}}i)} - y_{n_{2}} = \lambda\delta_{y}y_{n_{2}}.
\end{align}
Applying Theorem \ref{thm:eigK1} and combining the results in \eqref{eq:y1}, \eqref{eq:y2}, \eqref{eq:y300}, we obtain $\lambda = \delta_{y}^{-1}(e^{\theta_{i, j}} - 1)$ and $y_{s+1} = e^{s\theta_{i, j}}$ for $s = 0, 1, \cdots, n_{2} - 1$.
\end{proof}

\begin{theorem}[eigenpairs of $K_3$ if $\cos\theta_{\alpha} - \cos\theta_{\beta}\cos\theta_{\gamma} \geq 0$]\label{thm:eigK3_acute}
The eigenpairs of $K_{3}$ which is mentioned in \eqref{eq:K3} can be written as $(\delta_{z}^{-1}(e^{\theta_{i, j, k}}-1), \mbox{\bf z}_{i, j, k} \otimes \mbox{\bf y}_{i, j}\otimes\mbox{\bf x}_{i})$, where ${\bf x}_{i}$ and ${\bf y}_{i, j}$ are of the form as \eqref{eq:evK1} and \eqref{eq:evK2}, respectively, and
\begin{align}
\theta_{i, j, k} &= \dfrac{\imath 2\pi(k - \frac{m_{3}}{n_{2}}j - \frac{n_{2} - m_{3}}{n_{2}}\frac{m_{1}}{n_{1}}i + \frac{m_{1}-m_{2}}{n_{1}}i + {\bf  k}\cdot \hat{{\bf a}}_{3})}{n_{3}},\label{eq:acute_ewK3}\\
\mbox{\bf z}_{i, j, k} &= \begin{bmatrix}
1 & e^{\theta_{i, j, k}} & \cdots &  e^{(n_{2} - 1)\theta_{i, j, k}}
\end{bmatrix}^{\top}\label{eq:acute_evK3},
\end{align}
with $\hat{{\bf a}}_{3} = {\bf a}_{3} - \frac{m_{3}}{n_{2}}{\bf a}_{2} - \frac{n_{2} - m_{3}}{n_{2}}\frac{m_{1}}{n_{1}}{\bf a}_{1} + \frac{m_{1} - m_{2}}{n_{1}}{\bf a}_{1} + \rho_{3}{\bf a}_{2} - \frac{m_{3}}{n_{2}}\rho_{1}{\bf a}_{1} + \rho_{2}{\bf a}_{1}$ for $i = 1, \cdots, n_{1}$, $j = 1, \cdots, n_{2}$, and $k = 1, \cdots, n_{3}$.
\end{theorem}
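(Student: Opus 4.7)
The plan is to proceed exactly as in the proof of Theorem \ref{thm:eigK2}, but with one extra layer of tensor structure. I would assume the candidate eigenvector has the form $\mathbf{z}\otimes\mathbf{y}_{i,j}\otimes\mathbf{x}_i$, where $\mathbf{x}_i$ and $\mathbf{y}_{i,j}$ are the eigenvectors of $K_1$ and $K_2$ from Theorems \ref{thm:eigK1}--\ref{thm:eigK2}, and $\mathbf{z}=[z_1,\dots,z_{n_3}]^\top$ is to be determined. Applying $K_3$ of the form \eqref{eq:K3} to this ansatz and using the block structure, the first $n_3-1$ block rows yield the scalar recursions
\begin{align*}
z_{s+1}-z_s=\lambda\delta_z z_s,\qquad s=1,\dots,n_3-1,
\end{align*}
while the last block row reduces, after cancelling the factor $\mathbf{y}_{i,j}\otimes\mathbf{x}_i$ on both sides, to a single scalar equation
\begin{align*}
z_1\, e^{\imath 2\pi\mathbf{k}\cdot\mathbf{a}_3}\,\mu_{i,j}-z_{n_3}=\lambda\delta_z z_{n_3},
\end{align*}
provided one can show that $J_3(\mathbf{y}_{i,j}\otimes\mathbf{x}_i)=\mu_{i,j}(\mathbf{y}_{i,j}\otimes\mathbf{x}_i)$ for some scalar $\mu_{i,j}$. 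Once this is established, the standard argument used for $K_1$ delivers $z_{s+1}=e^{\theta_{i,j,k}}z_s$ and $\lambda=\delta_z^{-1}(e^{\theta_{i,j,k}}-1)$, where $e^{n_3\theta_{i,j,k}}=e^{\imath 2\pi\mathbf{k}\cdot\mathbf{a}_3}\mu_{i,j}$ fixes $\theta_{i,j,k}$ up to an integer multiple of $2\pi\imath/n_3$, which is parameterized by $k=1,\dots,n_3$.

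The core of the argument is therefore the computation of $\mu_{i,j}$. Under the hypothesis $\cos\theta_\alpha-\cos\theta_\beta\cos\theta_\gamma\ge 0$, the operator $J_3$ takes the explicit form displayed in \eqref{eq:acuteJ3}, and I would handle the two subcases $m_1\le m_2$ and $m_1>m_2$ in parallel. In each subcase $J_3$ is a Kronecker product of cyclic shift-with-phase patterns, and the inner factor $J_1$ from \eqref{eq:J1} acts on $\mathbf{x}_i$ exactly as in Theorem \ref{thm:eigK2}, producing the scalar $e^{\imath 2\pi(\rho_2\mathbf{k}\cdot\mathbf{a}_1-\frac{m_2}{n_1}i)\cdot\text{(unit)}}$ that was used there, so the block $I_{n_2-m_3}\otimes J_1$ contributes to $\mu_{i,j}$ a factor of the form $e^{\imath 2\pi(\rho_2\mathbf{k}\cdot\mathbf{a}_1-\frac{m_2}{n_1}i)}\cdot e^{-m_3\theta_{i,j}}$ (the $e^{-m_3\theta_{i,j}}$ reflecting the cyclic-by-$m_3$ shift on $\mathbf{y}_{i,j}$). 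Similarly, the upper block $e^{\imath 2\pi\rho_3\mathbf{k}\cdot\mathbf{a}_2}\cdot e^{-\imath 2\pi\mathbf{k}\cdot(\mathbf{a}_2+\rho_1\rho_4\mathbf{a}_1-\psi_1\mathbf{a}_1)}\,I_{m_3}\otimes[\,\cdot\,]$ acts on the top $m_3 n_1$ entries of $\mathbf{y}_{i,j}\otimes\mathbf{x}_i$ by first applying a cyclic shift of $\mathbf{x}_i$ of length $m_2-m_1$ (or $n_1-m_1+m_2$ in the other subcase) with an internal $e^{-\imath 2\pi\mathbf{k}\cdot\mathbf{a}_1}$ phase, producing a factor $e^{-\imath 2\pi\mathbf{k}\cdot\mathbf{a}_1}\cdot e^{(n_1-m_2+m_1)\theta_i}$ (resp.\ $e^{(m_1-m_2)\theta_i}$), together with an additional $e^{-(n_2-m_3)\theta_{i,j}}$ coming from the block swap relative to the bottom.

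The main obstacle will be the bookkeeping needed to verify that both subcases $m_1\le m_2$ and $m_1>m_2$ yield the same value of $\mu_{i,j}$, and that, after substituting $\theta_i=\imath 2\pi(i+\mathbf{k}\cdot\mathbf{a}_1)/n_1$ from \eqref{eq:ewK1} and $\theta_{i,j}=\imath 2\pi(j-\tfrac{m_1}{n_1}i+\mathbf{k}\cdot\hat{\mathbf{a}}_2)/n_2$ from \eqref{eq:ewK2}, all the stray $\rho_1,\rho_2,\rho_3$ and $m_1,m_2,m_3$ terms combine precisely into the shifted lattice vector
\begin{align*}
\hat{\mathbf{a}}_3=\mathbf{a}_3-\tfrac{m_3}{n_2}\mathbf{a}_2-\tfrac{n_2-m_3}{n_2}\tfrac{m_1}{n_1}\mathbf{a}_1+\tfrac{m_1-m_2}{n_1}\mathbf{a}_1+\rho_3\mathbf{a}_2-\tfrac{m_3}{n_2}\rho_1\mathbf{a}_1+\rho_2\mathbf{a}_1
\end{align*}
appearing in the statement. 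I expect the identities $\rho_4\in\{0,1\}$ with $\rho_1\rho_4=\rho_1$ (and similarly $\psi_1$ absorbs the discrepancy between the two subcases $m_1\lessgtr m_2$) to be the algebraic glue that makes the two subcases agree; verifying these collapses cleanly is the only real work. Once $\mu_{i,j}=\exp\bigl[\imath 2\pi\bigl(-\tfrac{m_3}{n_2}j-\tfrac{n_2-m_3}{n_2}\tfrac{m_1}{n_1}i+\tfrac{m_1-m_2}{n_1}i+\mathbf{k}\cdot(\hat{\mathbf{a}}_3-\mathbf{a}_3)\bigr)\bigr]$ is confirmed, extracting the $n_3$ distinct $n_3$-th roots gives \eqref{eq:acute_ewK3}, and the ansatz $\mathbf{z}=[1,e^{\theta_{i,j,k}},\dots,e^{(n_3-1)\theta_{i,j,k}}]^\top$ from \eqref{eq:acute_evK3} completes the eigenpair, yielding $3n=n_1 n_2 n_3$ linearly independent eigenvectors by varying $(i,j,k)$.
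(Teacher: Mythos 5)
Your proposal follows essentially the same route as the paper's proof: the same tensor ansatz, the same reduction of the last block row to a single scalar equation by checking that $J_{3}$ acts consistently on $\mathbf{y}_{i,j}\otimes\mathbf{x}_{i}$, and the same case analysis over $m_{1}\lessgtr m_{2}$ glued together by identities among $\rho_{1},\rho_{2},\rho_{4},\psi_{1}$ (the paper's precise identities are $-\rho_{1}\rho_{4}+\psi_{1}+\rho_{1}=\rho_{2}$ and $\rho_{1}-1+\rho_{2}\rho_{4}+\psi_{1}=\rho_{2}$ rather than your guessed $\rho_{1}\rho_{4}=\rho_{1}$, but your hedged statement correctly locates where the real work lies). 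Apart from the slip that the construction yields $n=n_{1}n_{2}n_{3}$ (not $3n$) eigenvectors of the $n\times n$ matrix $K_{3}$, the plan matches the paper's argument.
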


\begin{proof}
As before theorem, we may assume that the eigenpair of $K_{3}$ is of the form $(\lambda, [z_{1}, \cdots, z_{n_{3}}]^{\top}\otimes {\bf y}_{i, j} \otimes {\bf x}_{i})$. By the definition of eigenpair, we can get
\begin{subequations}
\begin{align}
z_{2} - z_{1} &= \lambda\delta_{z}z_{1},\label{eq:acute_z1}\\
&\vdots\notag\\
z_{n_{3}} - z_{n_{3}-1} &= \lambda\delta_{z}z_{n_{3} - 1},\label{eq:acute_z2}\\
z_{1}e^{\imath 2\pi{\bf k}\cdot{\bf a}_{3}}J_{3}({\bf y}_{i, j}\otimes{\bf x}_{i}) - z_{n_{3}}({\bf y}_{i, j}\otimes{\bf x}_{i}) &= \lambda\delta_{z}z_{n_{3}}({\bf y}_{i, j}\otimes{\bf x}_{i}).\label{eq:acute_z3}
\end{align}
\end{subequations}
Base on the formulation of $J_{3}$ in \eqref{eq:acuteJ3} and $y_{i, j}$ in \eqref{eq:evK2}, \eqref{eq:acute_z3} has two situations as below:\\
(1) $m_{1} < m_{2}$
\begin{subequations}
\begin{small}
\begin{align}
z_{1}e^{\imath 2\pi{\bf k}\cdot{\bf a}_{3}}e^{\imath 2\pi\rho_{3}{\bf k}\cdot{\bf a}_{2}}e^{-\imath 2\pi{\bf k}\cdot({\bf a}_{2}\! +\! \rho_{1}\rho_{4}{\bf a}_{1}\! -\! \psi_{1}{\bf a}_{1})}e^{(n_{2} - m_{3})\theta_{i, j}}\!\!\begin{bmatrix}\begin{smallmatrix}
 & e^{-\imath 2\pi{\bf k}\cdot{\bf a}_{1}}I_{m_{2}\! -\! m_{1}}\\
I_{n_{1}\! -\! m_{2}\! +\! m_{1}} &
\end{smallmatrix}\end{bmatrix}\!\!{\bf x}_{i}\notag\\
\hspace{6cm} - z_{n_{3}}{\bf x}_{i} = \lambda\delta_{z}z_{n_{3}}{\bf x}_{i}, \label{eq:acute_z31}\\
z_{1}e^{\imath 2\pi{\bf k}\cdot{\bf a}_{3}}e^{\imath 2\pi\rho_{3}{\bf k}\cdot{\bf a}_{2}}e^{-m_{3}\theta_{i, j}}J_{1}{\bf x}_{i} - z_{n_{3}}{\bf x}_{i} = \lambda\delta_{z}z_{n_{3}}{\bf x}_{i}.\label{eq:acute_z32}
\end{align}
\end{small}
\end{subequations}
Moreover, \eqref{eq:acute_z31} implies that
\begin{subequations}
\begin{align}
z_{1}e^{\imath 2\pi{\bf k}\cdot{\bf a}_{3}}e^{\imath 2\pi\rho_{3}{\bf k}\cdot{\bf a}_{2}}e^{-\imath 2\pi{\bf k}\cdot({\bf a}_{2} + \rho_{1}\rho_{4}{\bf a}_{1} - \psi_{1}{\bf a}_{1})}e^{(n_{2} - m_{3})\theta_{i, j}}e^{-\imath 2\pi{\bf k}\cdot{\bf a}_{1}}e^{(n_{1} - m_{2} + m_{1})\theta_{i}}\notag\\
- z_{n_{3}} = \lambda\delta_{z}z_{n_{3}},\label{eq:acute_z311}\\
z_{1}e^{\imath 2\pi{\bf k}\cdot{\bf a}_{3}}e^{\imath 2\pi\rho_{3}{\bf k}\cdot{\bf a}_{2}}e^{-\imath 2\pi{\bf k}\cdot({\bf a}_{2} + \rho_{1}\rho_{4}{\bf a}_{1} - \psi_{1}{\bf a}_{1})}e^{(n_{2} - m_{3})\theta_{i, j}}e^{(m_{1} - m_{2})\theta_{i}}\notag\\
- z_{n_{3}} = \lambda\delta_{z}z_{n_{3}}.\label{eq:acute_z312}
\end{align}
\end{subequations}
With the definition of $J_{1}$ which is mentioned in \eqref{eq:J1}, \eqref{eq:acute_z32} can divide into
\begin{subequations}
\begin{align}
z_{1}e^{\imath 2\pi{\bf k}\cdot{\bf a}_{3}}e^{\imath 2\pi\rho_{3}{\bf k}\cdot{\bf a}_{2}}e^{-m_{3}\theta_{i, j}}e^{\imath 2\pi\rho_{2}{\bf k}\cdot{\bf a}_{1}}e^{-\imath 2\pi{\bf k}\cdot{\bf a}_{1}}e^{(n_{1} - m_{2})\theta_{i}} - z_{n_{3}} &= \lambda\delta_{z}z_{n_{3}}, \label{eq:acute_z321}\\
z_{1}e^{\imath 2\pi{\bf k}\cdot{\bf a}_{3}}e^{\imath 2\pi\rho_{3}{\bf k}\cdot{\bf a}_{2}}e^{-m_{3}\theta_{i, j}}e^{\imath 2\pi\rho_{2}{\bf k}\cdot{\bf a}_{1}}e^{-m_{2}\theta_{i}} - z_{n_{3}} &= \lambda\delta_{z}z_{n_{3}}.\label{eq:acute_z322}
\end{align}
\end{subequations}
Using the definitions of $\theta_{i}$ and $\theta_{i, j}$ in \eqref{eq:ewK1} and \eqref{eq:ewK2}, respectively, the exponents in \eqref{eq:acute_z311}, \eqref{eq:acute_z312}, \eqref{eq:acute_z321}, and \eqref{eq:acute_z322} satisfy
\begin{subequations}
\begin{align}
&\imath2\pi\{{\bf k}\cdot{\bf a}_{3} + \rho_{3}{\bf k}\cdot{\bf a}_{2} - {\bf k}\cdot({\bf a}_{2} + \rho_{1}\rho_{4}{\bf a}_{1} - \psi_{1}{\bf a}_{1}) - {\bf k}\cdot{\bf a}_{1}\notag\\
&\hspace{3cm} + \tfrac{n_{2} - m_{3}}{n_{2}}(j - \tfrac{m_{1}}{n_{1}}i + {\bf k}\cdot({\bf a}_{2} - \tfrac{m_{1}}{n_{1}}{\bf a}_{1} + \rho_{1}{\bf a}_{1})) + \tfrac{n_{1} - m_{2} + m_{1}}{n_{1}}(i + {\bf k}\cdot{\bf a}_{1})\}\notag\\
=& \imath 2\pi\{{\bf k}\cdot[{\bf a}_{3}\!\! +\!\! \rho_{3}{\bf a}_{2}\!\! -\!\! {\bf a}_{2}\!\! -\!\! \rho_{1}\rho_{4}{\bf a}_{1}\!\! +\!\! \psi_{1}{\bf a}_{1}\!\! -\!\! {\bf a}_{1}\!\! +\!\! \tfrac{n_{2} - m_{3}}{n_{2}}{\bf a}_{2}\!\! -\!\! \tfrac{n_{2} - m_{3}}{n_{2}}\tfrac{m_{1}}{n_{1}}{\bf a}_{1}\!\! +\!\! \tfrac{n_{2} - m_{3}}{n_{2}}\rho_{1}{\bf a}_{1}\!\! +\!\! \tfrac{n_{1} - m_{2} + m_{1}}{n_{1}}{\bf a}_{1}]\notag\\
& \hspace{3cm}+ \tfrac{n_{2} - m_{3}}{n_{2}}j - \tfrac{n_{2} - m_{3}}{n_{2}}\tfrac{m_{1}}{n_{1}}i + \tfrac{n_{1} - m_{2} + m_{1}}{n_{1}}i\}\notag\\
=& \imath2\pi\{{\bf k}\cdot[{\bf a}_{3} - \tfrac{m_{3}}{n_{2}}{\bf a}_{2} + \tfrac{m_{1} - m_{2}}{n_{1}}{\bf a}_{1} - \tfrac{n_{2} - m_{3}}{n_{2}}\tfrac{m_{1}}{n_{1}}{\bf a}_{1} + \rho_{3}{\bf a}_{2} + \tfrac{n_{2}-m_{3}}{n_{2}}\rho_{1}{\bf a}_{1} - \rho_{1}\rho_{4}{\bf a}_{1} + \psi_{1}{\bf a}_{1}]\notag\\
& \hspace{3cm} -\tfrac{m_{3}}{n_{2}}j - \tfrac{n_{2} - m_{3}}{n_{2}}\tfrac{m_{1}}{n_{1}}i + \tfrac{m_{1} - m_{2}}{n_{1}}i\} + \imath 2\pi(i + j),\label{eq:acute_z311e}\\
&\imath 2\pi\{{\bf k}\cdot{\bf a}_{3} + \rho_{3}{\bf k}\cdot{\bf a}_{2} - {\bf k}\cdot({\bf a}_{2} + \rho_{1}\rho_{4}{\bf a}_{1} - \psi_{1}{\bf a}_{1}) \notag\\
&\hspace{3cm}+ \tfrac{n_{2} - m_{3}}{n_{2}}(j - \tfrac{m_{1}}{n_{1}}i + {\bf k}\cdot({\bf a}_{2} - \tfrac{m_{1}}{n_{1}}{\bf a}_{1} + \rho_{1}{\bf a}_{1})) + \tfrac{m_{1}- m_{2}}{n_{1}}(i + {\bf k}\cdot{\bf a}_{1})\}\notag\\
=& \imath 2\pi\{{\bf k}\cdot[{\bf a}_{3}\!\! +\!\! \rho_{3}{\bf a}_{2}\!\! -\!\! {\bf a}_{2}\!\! -\!\! \rho_{1}\rho_{4}{\bf a}_{1}\!\! +\!\! \psi_{1}{\bf a}_{1}\!\! +\!\! \tfrac{n_{2} - m_{3}}{n_{2}}{\bf a}_{2}\!\! -\!\! \tfrac{n_{2} - m_{3}}{n_{2}}\tfrac{m_{1}}{n_{1}}{\bf a}_{1}\!\! +\!\! \tfrac{n_{2} - m_{3}}{n_{2}}\rho_{1}{\bf a}_{1}\!\! +\!\! \tfrac{m_{1} - m_{2}}{n_{1}}{\bf a}_{1}]\notag\\
&\hspace{3cm} + \tfrac{n_{2}- m_{3}}{n_{2}}j - \tfrac{n_{2}-m_{3}}{n_{2}}\tfrac{m_{1}}{n_{1}}i + \tfrac{m_{1}-m_{2}}{n_{1}}i\}\notag\\
=& \imath 2\pi\{{\bf k}\cdot[{\bf a}_{3} - \tfrac{m_{3}}{n_{2}}{\bf a}_{2} - \tfrac{n_{2}-m_{3}}{n_{2}}\tfrac{m_{1}}{n_{1}}{\bf a}_{1} + \tfrac{m_{1}-m_{2}}{n_{1}}{\bf a}_{1} + \rho_{3}{\bf a}_{2} - \rho_{1}\rho_{4}{\bf a}_{1} + \psi_{1}{\bf a}_{1} + \tfrac{n_{2} - m_{3}}{n_{2}}\rho_{1}{\bf a}_{1}]\notag\\
&\hspace{3cm} -\tfrac{m_{3}}{n_{2}}j - \tfrac{n_{2}-m_{3}}{n_{2}}\tfrac{m_{1}}{n_{1}}i + \tfrac{m_{1}-m_{2}}{n_{1}}i\} + \imath 2\pi j,\label{eq:acute_z312e}\\
& \imath 2\pi\{{\bf k}\cdot{\bf a}_{3}\!\! +\!\! \rho_{3}{\bf k}\cdot{\bf a}_{2}\!\! -\!\! \tfrac{m_{3}}{n_{2}}(j\!\! -\!\! \tfrac{m_{1}}{n_{1}}i\!\! +\!\! {\bf k}\cdot({\bf a}_{2}\!\! -\!\! \tfrac{m_{1}}{n_{1}}{\bf a}_{1}\!\! +\!\! \rho_{1}{\bf a}_{1}))\!\! +\!\! \rho_{2}{\bf k}\cdot{\bf a}_{1}\!\! -\!\! {\bf k}\cdot{\bf a}_{1}\!\! +\!\! \tfrac{n_{1}-m_{2}}{n_{1}}(i\!\! +\!\! {\bf k}\cdot{\bf a}_{1})\}\notag\\
=& \imath 2\pi\{{\bf k}\cdot[{\bf a}_{3}\!\! +\!\! \rho_{3}{\bf a}_{2}\!\! -\!\! \tfrac{m_{3}}{n_{2}}{\bf a}_{2}\!\! +\!\! \tfrac{m_{3}}{n_{2}}\tfrac{m_{1}}{n_{1}}{\bf a}_{1}\!\! -\!\! \tfrac{m_{3}}{n_{2}}\rho_{1}{\bf a}_{1}\!\! +\!\! \rho_{2}{\bf a}_{1}\!\! -\!\! {\bf a}_{1}\!\! +\!\! \tfrac{n_{1} - m_{2}}{n_{1}}{\bf a}_{1}]\!\! -\!\! \tfrac{m_{3}}{n_{2}}j\!\! +\!\! \tfrac{m_{3}}{n_{2}}\tfrac{m_{1}}{n_{1}}i\!\! +\!\! \tfrac{n_{1}-m_{2}}{n_{1}}i\}\notag\\
=& \imath 2\pi\{{\bf k}\cdot[{\bf a}_{3} - \tfrac{m_{3}}{n_{2}}{\bf a}_{2} - \tfrac{n_{2}-m_{3}}{n_{2}}\tfrac{m_{1}}{n_{1}}{\bf a}_{1} + \tfrac{m_{1}-m_{2}}{n_{1}}{\bf a}_{1} + \rho_{3}{\bf a}_{2} - \tfrac{m_{3}}{n_{2}}\rho_{1}{\bf a}_{1} + \rho_{2}{\bf a}_{1}]\notag\\
&\hspace{3cm} - \tfrac{m_{3}}{n_{2}}j - \tfrac{n_{2}-m_{3}}{n_{2}}\tfrac{m_{1}}{n_{1}}i + \tfrac{m_{1}-m_{2}}{n_{1}}i\} + \imath 2\pi i,\label{eq:acute_z321e}\\
& \imath 2\pi\{{\bf k}\cdot{\bf a}_{3}\! +\! \rho_{3}{\bf k}\cdot{\bf a}_{2}\! -\! \tfrac{m_{3}}{n_{2}}(j\! -\! \tfrac{m_{1}}{n_{1}}i\! +\! {\bf k}\cdot({\bf a}_{2}\! -\! \tfrac{m_{1}}{n_{1}}{\bf a}_{1}\! +\! \rho_{1}{\bf a}_{1}))\! +\! \rho_{2}{\bf k}\cdot{\bf a}_{1}\! -\! \tfrac{m_{2}}{n_{1}}(i\! +\! {\bf k}\cdot{\bf a}_{1})\}\notag\\
=& \imath 2\pi\{{\bf k}\cdot[{\bf a}_{3}\! +\! \rho_{3}{\bf a}_{2}\! -\! \tfrac{m_{3}}{n_{2}}{\bf a}_{2}\! +\! \tfrac{m_{3}}{n_{2}}\tfrac{m_{1}}{n_{1}}{\bf a}_{1}\!-\!\tfrac{m_{3}}{n_{2}}\rho_{1}{\bf a}_{1}\! +\! \rho_{2}{\bf a}_{1}\! -\! \tfrac{m_{2}}{n_{1}}{\bf a}_{1}]\! -\! \tfrac{m_{3}}{n_{2}}j\! +\! \tfrac{m_{3}}{n_{2}}\tfrac{m_{1}}{n_{1}}i\! -\! \tfrac{m_{2}}{n_{1}}i\}\notag\\
=& \imath 2\pi\{{\bf k}\cdot[{\bf a}_{3}\!\! -\!\! \tfrac{m_{3}}{n_{2}}{\bf a}_{2}\!\! -\!\! \tfrac{n_{2}-m_{3}}{n_{2}}\tfrac{m_{1}}{n_{1}}{\bf a}_{1}\!\! +\!\! \tfrac{m_{1}-m_{2}}{n_{1}}{\bf a}_{1}\!\! +\!\! \rho_{3}{\bf a}_{2}\!\! -\!\! \tfrac{m_{3}}{n_{2}}\rho_{1}{\bf a}_{1}\!\! +\!\! \rho_{2}{\bf a}_{1}]\!\! -\!\! \tfrac{m_{3}}{n_{2}}j\!\! -\!\! \tfrac{n_{2}-m_{3}}{n_{2}}\tfrac{m_{1}}{n_{1}}i\!\! +\!\! \tfrac{m_{1}-m_{2}}{n_{1}}i\}.\label{eq:acute_z322e}
\end{align}
\end{subequations}
We have to check whether $-\rho_{1}\rho_{4} + \psi_{1} + \rho_{1} = \rho_{2}$:
\begin{itemize}
\item If $\rho_{1} = 0$ and $\rho_{2} = 0$, since $\psi_{1} = 0$, we have $-\rho_{1}\rho_{4} + \psi_{1} + \rho_{1} = 0 = \rho_{2}$.
\item If $\rho_{1} = 0$ and $\rho_{2} = 1$, since $\psi_{1} = 1$, we have $-\rho_{1}\rho_{4} + \psi_{1} + \rho_{1} = 1 = \rho_{2}$.
\item If $\rho_{1} = 1$ and $\rho_{2} = 0$, since $\psi_{1} = 0$, $\rho_{4} = 1$, we have $-\rho_{1}\rho_{4} + \psi_{1} + \rho_{1} = 0 = \rho_{2}$.
\item If $\rho_{1} = 1$ and $\rho_{2} = 1$, since $\psi_{1} = 0$, $\rho_{4} = 0$, we have $-\rho_{1}\rho_{4} + \psi_{1} + \rho_{1} = 1 = \rho_{2}$.
\end{itemize}
Substitute \eqref{eq:acute_z311e}, \eqref{eq:acute_z312e}, \eqref{eq:acute_z321e}, and \eqref{eq:acute_z322e} into \eqref{eq:acute_z311}, \eqref{eq:acute_z312}, \eqref{eq:acute_z321}, and \eqref{eq:acute_z322}, respectively, we reduce \eqref{eq:acute_z3} to be
\begin{align}\label{eq:acute_z3_m2m1}
z_{1}e^{\imath 2\pi\left({\bf k}\cdot\hat{\bf a}_{3} - \frac{m_{3}}{n_{2}}j - \frac{n_{2} - m_{3}}{n_{2}}\frac{m_{1}}{n_{1}}i + \tfrac{m_{1}-m_{2}}{n_{1}}i\right)} - z_{n_{3}} = \lambda\delta_{z}z_{n_{3}}.
\end{align}
(2) $m_{1} > m_{2}$
\begin{subequations}
\begin{align}
z_{1}e^{\imath 2\pi{\bf k}\cdot{\bf a}_{3}}e^{\imath 2\pi\rho_{3}{\bf k}\cdot{\bf a}_{2}}e^{-\imath 2\pi{\bf k}\cdot({\bf a}_{2} - \rho_{2}\rho_{4}{\bf a}_{1} - \psi_{1}{\bf a}_{1})}\begin{bmatrix}\begin{smallmatrix}
& e^{-\imath 2\pi{\bf k}\cdot{\bf a}_{1}}I_{n_{1} - m_{1} + m_{2}}\\
I_{m_{1} - m_{2}} & 
\end{smallmatrix}\end{bmatrix}e^{(n_{2}-m_{3})\theta_{i, j}}{\bf x}_{i}\notag\\
 - z_{n_{3}}{\bf x}_{i} = \lambda\delta_{z}z_{n_{3}}{\bf x}_{i},\label{eq:acute_z33}\\
z_{1}e^{\imath 2\pi{\bf k}\cdot{\bf a}_{3}}e^{\imath 2\pi\rho_{3}{\bf k}\cdot{\bf a}_{2}}e^{-m_{3}\theta_{i, j}}J_{1}{\bf x}_{i} - z_{n_{3}}{\bf x}_{i} = \lambda\delta_{z}z_{n_{3}}{\bf x}_{i}.\label{eq:acute_z34}
\end{align}
\end{subequations}
Moreover, \eqref{eq:acute_z33} implies that
\begin{subequations}
\begin{align}
z_{1}e^{\imath 2\pi{\bf k}\cdot{\bf a}_{3}}e^{\imath 2\pi\rho_{3}{\bf k}\cdot{\bf a}_{2}}e^{-\imath 2\pi{\bf k}\cdot({\bf a}_{2} - \rho_{2}\rho_{4}{\bf a}_{1} - \psi_{1}{\bf a}_{1})}e^{(n_{2} - m_{3})\theta_{i, j}}e^{-\imath 2\pi{\bf k}\cdot{\bf a}_{1}}e^{(m_{1} - m_{2})\theta_{i}}\notag\\
- z_{n_{3}} = \lambda\delta_{z}z_{n_{3}},\label{eq:acute_z331}\\
z_{1}e^{\imath 2\pi{\bf k}\cdot{\bf a}_{3}}e^{\imath 2\pi\rho_{3}{\bf k}\cdot{\bf a}_{2}}e^{-\imath 2\pi{\bf k}\cdot({\bf a}_{2} - \rho_{2}\rho_{4}{\bf a}_{1} - \psi_{1}{\bf a}_{1})}e^{(n_{2} - m_{3})\theta_{i, j}}e^{-(n_{1} - m_{1} + m_{2})\theta_{i}}\notag\\
- z_{n_{3}} = \lambda\delta_{z}z_{n_{3}}.\label{eq:acute_z332}
\end{align}
\end{subequations}
From the definition of $J_{1}$ in \eqref{eq:J1}, \eqref{eq:acute_z34} implies that \eqref{eq:acute_z321} and \eqref{eq:acute_z322}. Using the definitions of $\theta_{i}$ and $\theta_{i, j}$ in \eqref{eq:ewK1} and \eqref{eq:ewK2}, respectively, the exponents in \eqref{eq:acute_z331} and \eqref{eq:acute_z332} satisfy
\begin{subequations}
\begin{align}
&\imath 2\pi\{{\bf k}\cdot{\bf a}_{3}\! +\! \rho_{3}{\bf k}\cdot{\bf a}_{2}\! -\! {\bf k}\cdot({\bf a}_{2}\! -\! \rho_{2}\rho_{4}{\bf a}_{1}\! -\! \psi_{1}{\bf a}_{1})\! +\! \tfrac{n_{2} - m_{3}}{n_{2}}(j\! -\! \tfrac{m_{1}}{n_{1}}i\! +\! {\bf k}\cdot({\bf a}_{2}\! -\! \tfrac{m_{1}}{n_{1}}{\bf a}_{1}\! +\! \rho_{1}{\bf a}_{1}))\notag\\
&\hspace{3cm} - {\bf k}\cdot{\bf a}_{1} + \tfrac{m_{1} - m_{2}}{n_{1}}(i + {\bf k}\cdot{\bf a}_{1})\}\notag\\
=& \imath 2\pi\{{\bf k}\cdot[{\bf a}_{3}\!\! +\!\! \rho_{3}{\bf a}_{2}\!\! -\!\! {\bf a}_{2}\!\! +\!\! \rho_{2}\rho_{4}{\bf a}_{1}\!\! +\!\! \psi_{1}{\bf a}_{1}\!\! +\!\! \tfrac{n_{2} - m_{3}}{n_{2}}{\bf a}_{2}\!\! -\!\! \tfrac{n_{2} - m_{3}}{n_{2}}\tfrac{m_{1}}{n_{1}}{\bf a}_{1}\!\! +\!\! \tfrac{n_{2} - m_{3}}{n_{2}}\rho_{1}{\bf a}_{1}\!\! -\!\! {\bf a}_{1}\!\! +\!\! \tfrac{m_{1} - m_{2}}{n_{1}}{\bf a}_{1}]\notag\\
&\hspace{3cm}+ \tfrac{n_{2} - m_{3}}{n_{2}}j - \tfrac{n_{2} - m_{3}}{n_{2}}\tfrac{m_{1}}{n_{1}}i + \tfrac{m_{1} - m_{2}}{n_{1}}i\}\notag\\
=& \imath 2\pi\{{\bf k}\cdot[{\bf a}_{3}\! -\! \tfrac{m_{3}}{n_{2}}{\bf a}_{2}\! -\! \tfrac{n_{2} - m_{3}}{n_{2}}\tfrac{m_{1}}{n_{1}}{\bf a}_{1}\! +\! \tfrac{m_{1} - m_{2}}{n_{1}}{\bf a}_{1}\! -\! {\bf a}_{1}\! +\! \rho_{3}{\bf a}_{2}\! +\! \rho_{2}\rho_{4}{\bf a}_{1}\! +\! \psi_{1}{\bf a}_{1}\! +\! \tfrac{n_{2} - m_{3}}{n_{2}}\rho_{1}{\bf a}_{1}]\notag\\
&\hspace{3cm} - \tfrac{m_{3}}{n_{2}}j - \tfrac{n_{2} - m_{3}}{n_{2}}\tfrac{m_{1}}{n_{1}}i + \tfrac{m_{2} - m_{1}}{n_{1}}i\} + \imath 2\pi j,\label{eq:acute_z331e}
\end{align}
\begin{align}
&\imath 2\pi\{{\bf k}\cdot{\bf a}_{3}\! +\! \rho_{3}{\bf k}\cdot{\bf a}_{2}\! -\! {\bf k}\cdot({\bf a}_{2}\! -\! \rho_{2}\rho_{4}{\bf a}_{1}\! -\! \psi_{1}{\bf a}_{1})\! +\! \tfrac{n_{2} - m_{3}}{n_{2}}(j\! -\! \tfrac{m_{1}}{n_{1}}i\! +\! {\bf k}\cdot({\bf a}_{2}\! -\! \tfrac{m_{1}}{n_{1}}{\bf a}_{1}\! +\! \rho_{1}{\bf a}_{1}))\notag\\
&\hspace{3cm} - \tfrac{n_{1} - m_{1} + m_{2}}{n_{1}}(i + {\bf k}\cdot{\bf a}_{1})\}\notag\\
=& \imath 2\pi\{{\bf k}\cdot[{\bf a}_{3}\!\! +\!\! \rho_{3}{\bf a}_{2}\!\! -\!\! {\bf a}_{2}\!\! +\!\! \rho_{2}\rho_{4}{\bf a}_{1}\!\! +\!\! \psi_{1}{\bf a}_{1}\!\! +\!\! \tfrac{n_{2}-m_{3}}{n_{2}}{\bf a}_{2}\!\! -\!\! \tfrac{n_{2} - m_{3}}{n_{2}}\tfrac{m_{1}}{n_{1}}{\bf a}_{1}\!\! +\!\! \tfrac{n_{2} - m_{3}}{n_{2}}\rho_{1}{\bf a}_{1}\!\! -\!\! \tfrac{n_{1} - m_{1} + m_{2}}{n_{1}}{\bf a}_{1}]\notag\\
&\hspace{3cm} + \tfrac{n_{2} - m_{3}}{n_{2}}j - \tfrac{n_{2}-m_{3}}{n_{2}}\tfrac{m_{1}}{n_{1}}i - \tfrac{n_{1} - m_{1} + m_{2}}{n_{1}}i\}\notag\\
=& \imath 2\pi\{{\bf k}\cdot[{\bf a}_{3}\! -\! \tfrac{m_{3}}{n_{2}}{\bf a}_{2}\! -\! \tfrac{n_{2}-m_{3}}{n_{2}}\tfrac{m_{1}}{n_{1}}{\bf a}_{1}\! +\! \tfrac{m_{1}-m_{2}}{n_{1}}{\bf a}_{1}\! -\! {\bf a}_{1}\! +\! \rho_{3}{\bf a}_{2}\! +\! \rho_{2}\rho_{4}{\bf a}_{1}\! +\! \psi_{1}{\bf a}_{1}\! +\! \tfrac{n_{2}-m_{3}}{n_{2}}\rho_{1}{\bf a}_{1}]\notag\\
&\hspace{3cm} - \tfrac{m_{3}}{n_{2}}j - \tfrac{n_{2} - m_{3}}{n_{2}}\tfrac{m_{1}}{n_{1}}i + \tfrac{m_{1} - m_{2}}{n_{1}}i\} + \imath 2\pi(j - i).\label{eq:acute_z332e}
\end{align}
\end{subequations}
We have to check whether $\rho_{1} - 1 + \rho_{2}\rho_{4} + \psi_{1} = \rho_{2}$:
\begin{itemize}
\item If $\rho_{1} = 0$ and $\rho_{2} = 0$, since $\psi_{1} = 1$, we have $\rho_{1} - 1 + \rho_{2}\rho_{4} + \psi_{1} = 0 = \rho_{2}$.
\item If $\rho_{1} = 0$ and $\rho_{2} = 1$, since $\psi_{1} = 1$, $\rho_{4} = 1$, we have $\rho_{1} - 1 + \rho_{2}\rho_{4} + \psi_{1} = 1 = \rho_{2}$.
\item If $\rho_{1} = 1$ and $\rho_{2} = 0$, since $\psi_{1} = 0$, we have $\rho_{1} - 1 + \rho_{2}\rho_{4} + \psi_{1} = 0 = \rho_{2}$.
\item If $\rho_{1} = 1$ and $\rho_{2} = 1$, since $\psi_{1} = 1$, $\rho_{4} = 0$, we have $\rho_{1} - 1 + \rho_{2}\rho_{4} + \psi_{1} = 1 = \rho_{2}$.
\end{itemize}
Substituting \eqref{eq:acute_z331e}, \eqref{eq:acute_z332e}, \eqref{eq:acute_z321e}, and \eqref{eq:acute_z322e} into \eqref{eq:acute_z331}, \eqref{eq:acute_z332}, \eqref{eq:acute_z321}, and \eqref{eq:acute_z322}, respectively, we reduce \eqref{eq:acute_z3} to
\begin{align}\label{eq:acute_z3_m1m2}
z_{1}e^{\imath 2\pi\left({\bf k}\cdot\hat{\bf a}_{3} - \frac{m_{3}}{n_{2}}j - \frac{n_{2} - m_{3}}{n_{2}}\frac{m_{1}}{n_{1}}i + \tfrac{m_{1}-m_{2}}{n_{1}}i\right)} - z_{n_{3}} = \lambda\delta_{z}z_{n_{3}}.
\end{align}
Applying the Theorem \ref{thm:eigK1} and combining the results in \eqref{eq:acute_z1}, \eqref{eq:acute_z2} with \eqref{eq:acute_z3_m2m1}, \eqref{eq:acute_z3_m1m2}, we obtain $\lambda = \delta_{z}^{-1}(e^{\theta_{i, j, k}} - 1)$ and $z_{s+1} = e^{s\theta_{i, j, k}}$ for $s = 0, 1, \cdots, n_{3}$.
\end{proof}

\begin{theorem}[eigenpairs of $K_3$ if $\cos\theta_{\alpha} - \cos\theta_{\beta}\cos\theta_{\gamma} < 0$]\label{thm:eigK3_obtuse}
The eigenpairs of $K_{3}$ which is mentioned in \eqref{eq:K3} can be written as $(\delta_{z}^{-1}(e^{\theta_{i, j, k}}-1), \mbox{\bf z}_{i, j, k} \otimes \mbox{\bf y}_{i, j}\otimes\mbox{\bf x}_{i})$, where ${\bf x}_{i}$ and ${\bf y}_{i, j}$ are of the form as \eqref{eq:evK1} and \eqref{eq:evK2}, respectively, and
\begin{align}
\theta_{i, j, k} &= \dfrac{\imath 2\pi(k - \frac{m_{3}}{n_{2}}j - \frac{n_{2} - m_{3}}{n_{2}}\frac{m_{1}}{n_{1}}i - \frac{m_{2}}{n_{1}}i + {\bf  k}\cdot \hat{{\bf a}}_{3})}{n_{3}},\label{eq:obtuse_ewK3}\\
\mbox{\bf z}_{i, j, k} &= \begin{bmatrix}
1 & e^{\theta_{i, j, k}} & \cdots &  e^{(n_{2} - 1)\theta_{i, j, k}}
\end{bmatrix}^{\top}\label{eq:obtuse_evK3},
\end{align}
with $\hat{{\bf a}}_{3} = {\bf a}_{3} - \frac{m_{3}}{n_{2}}{\bf a}_{2} - \frac{n_{2} - m_{3}}{n_{2}}\frac{m_{1}}{n_{1}}{\bf a}_{1} - \frac{m_{2}}{n_{1}}{\bf a}_{1} + \rho_{3}{\bf a}_{2} - \frac{m_{3}}{n_{2}}\rho_{1}{\bf a}_{1} + \rho_{1}{\bf a}_{1} + \rho_{2}{\bf a}_{1}$ for $i = 1, \cdots, n_{1}$, $j = 1, \cdots, n_{2}$, and $k = 1, \cdots, n_{3}$.
\end{theorem}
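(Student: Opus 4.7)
The plan is to mirror the strategy used for Theorem~\ref{thm:eigK3_acute}, but to feed in the obtuse forms of $J_3$ from \eqref{eq:obtuseJ3} instead of the acute forms from \eqref{eq:acuteJ3}. Specifically, I would postulate an eigenpair of the form
$(\lambda,\ [z_1,\dots,z_{n_3}]^\top \otimes {\bf y}_{i,j}\otimes {\bf x}_i)$, with ${\bf x}_i$ and ${\bf y}_{i,j}$ given by Theorems~\ref{thm:eigK1} and~\ref{thm:eigK2}. Writing out $K_3$ acting on this ansatz produces $n_3-1$ ``interior'' recurrences $z_{s+1}-z_s=\lambda\delta_z z_s$ (identical to \eqref{eq:acute_z1}--\eqref{eq:acute_z2}) plus one wrap-around relation of the form
\begin{align*}
z_1 e^{\imath 2\pi {\bf k}\cdot{\bf a}_3} J_3 ({\bf y}_{i,j}\otimes{\bf x}_i) - z_{n_3}({\bf y}_{i,j}\otimes{\bf x}_i) = \lambda\delta_z z_{n_3}({\bf y}_{i,j}\otimes{\bf x}_i).
\end{align*}
Once the wrap-around relation is reduced to a scalar equation of the form $z_1 e^{\imath 2\pi\Theta} - z_{n_3} = \lambda\delta_z z_{n_3}$, Theorem~\ref{thm:eigK1} immediately delivers $\lambda=\delta_z^{-1}(e^{\theta_{i,j,k}}-1)$ and $z_{s+1}=e^{s\theta_{i,j,k}}$, provided $\Theta$ matches the exponent advertised in \eqref{eq:obtuse_ewK3}.

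The bulk of the work is therefore a case analysis that parallels the acute proof. I would split into the two sub-cases $m_1+m_2\le n_1$ and $m_1+m_2>n_1$ dictated by \eqref{eq:obtuseJ3}, and in each sub-case the block structure of $J_3$ (one block coming from $I_{m_3}\otimes J_1$, the other from an $I_{n_2-m_3}$ block with an extra $e^{\imath 2\pi {\bf k}\cdot{\bf a}_2}$ prefactor and an inner $2\times 2$ block) forces four scalar identities, one per ``corner'' of the combined $(i,j)$ index range. Each identity is obtained by substituting $\theta_i$ from \eqref{eq:ewK1} and $\theta_{i,j}$ from \eqref{eq:ewK2} and then collecting the lattice-vector terms. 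The target exponent is
\begin{align*}
\Theta = {\bf k}\cdot\hat{\bf a}_3 \;-\; \tfrac{m_3}{n_2}j \;-\; \tfrac{n_2-m_3}{n_2}\tfrac{m_1}{n_1}i \;-\; \tfrac{m_2}{n_1}i,
\end{align*}
with $\hat{\bf a}_3$ as in the statement; any stray integer multiples of $2\pi$ in $i$ or $j$ are absorbed harmlessly into $e^{\imath 2\pi(\cdot)}$.

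The main obstacle will be the bookkeeping of the auxiliary indicators $\rho_1,\rho_2,\rho_3=1,\rho_4,\rho_5,\psi_2$. In the acute case the proof hinged on the identity $-\rho_1\rho_4+\psi_1+\rho_1=\rho_2$; in the obtuse case the analogous identity to verify, coming from matching the $I_{n_2-m_3}$ block, is
\begin{align*}
\rho_1\rho_4+\psi_2+\rho_1 \;=\; \rho_2 \quad (\text{when } m_1+m_2\le n_1),\qquad
-\rho_5\rho_4+\psi_2+\rho_1 \;=\; \rho_2 \quad (\text{when } m_1+m_2>n_1),
\end{align*}
each of which must be checked against the four sign combinations of $\cos\theta_\gamma$ and $\cos\theta_\beta$ together with the defining cases of $\psi_2$. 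Once these identities are in hand, the four reduced wrap-around equations collapse to the single scalar relation above, and the theorem follows exactly as in the acute case.
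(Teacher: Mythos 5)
Your overall strategy is exactly the paper's: same ansatz, same split into interior recurrences plus a wrap-around relation, same case division $m_1+m_2\le n_1$ versus $m_1+m_2>n_1$, and the same plan of collapsing four corner identities into one scalar relation via a combinatorial identity among the indicators. However, the specific identities you propose to verify are not the right ones, and they are in fact false, so the reduction would not close as written. For $m_1+m_2\le n_1$ the identity that actually makes the exponent from the $I_{n_2-m_3}$ block match the $\rho_1{\bf a}_1+\rho_2{\bf a}_1$ terms in $\hat{\bf a}_3$ is
\begin{align*}
\rho_1+\rho_2=\rho_1\rho_4+\psi_2,
\end{align*}
not $\rho_1\rho_4+\psi_2+\rho_1=\rho_2$; your version fails already at $\rho_1=1,\rho_2=0$ (there $\psi_2=1$, $\rho_4=0$, giving $2\ne 0$). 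Similarly, for $m_1+m_2>n_1$ the correct identity is
\begin{align*}
\rho_1+\rho_2=\psi_2-\rho_5\rho_4+1,
\end{align*}
whereas your $-\rho_5\rho_4+\psi_2+\rho_1=\rho_2$ fails at $\rho_1=\rho_2=0$ (where $\psi_2=0$, $\rho_4=\rho_5=1$ give $-1\ne 0$). The sign of the $\rho_1$ contribution is the culprit: the $\rho_1$ entering through $\hat{\bf a}_2$ in $\theta_{i,j}$ contributes $\tfrac{n_2-m_3}{n_2}\rho_1=\rho_1-\tfrac{m_3}{n_2}\rho_1$, of which only $-\tfrac{m_3}{n_2}\rho_1$ is absorbed into $\hat{\bf a}_3$, leaving $+\rho_1$ to be matched on the $\hat{\bf a}_3$ side rather than added on the block side. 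A related slip is your placement of the phase prefactors in \eqref{eq:obtuseJ3}: the $e^{-\imath 2\pi{\bf k}\cdot{\bf a}_2}$ factor sits on the $I_{m_3}\otimes J_1$ block, while the $I_{n_2-m_3}$ block carries only an ${\bf a}_1$-phase. With the corrected identities (each of which does hold in all four sign combinations of $\cos\theta_\gamma$, $\cos\theta_\beta$) the rest of your outline goes through and coincides with the paper's proof.
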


\begin{proof}
Assume that the eigenpair of $K_{3}$ is of the form $(\lambda, [z_{1}, \cdots, z_{n_{3}}]^{\top}\otimes {\bf y}_{i, j} \otimes {\bf x}_{i})$. It satisties that
\begin{subequations}
\begin{align}
z_{2} - z_{1} &= \lambda\delta_{z}z_{1},\label{eq:obtuse_z1}\\
&\vdots\notag\\
z_{n_{3}} - z_{n_{3}-1} &= \lambda\delta_{z}z_{n_{3} - 1},\label{eq:obtuse_z2}\\
z_{1}e^{\imath 2\pi{\bf k}\cdot{\bf a}_{3}}J_{3}({\bf y}_{i, j}\otimes{\bf x}_{i}) - z_{n_{3}}({\bf y}_{i, j}\otimes{\bf x}_{i}) &= \lambda\delta_{z}z_{n_{3}}({\bf y}_{i, j}\otimes{\bf x}_{i}).\label{eq:obtuse_z3}
\end{align}
\end{subequations}
With the definitions of $J_{3}$ which is mentioned in \eqref{eq:obtuseJ3} and $y_{i, j}$ in \eqref{eq:evK2}, \eqref{eq:obtuse_z3} has two situations as below:\\
(1) $m_{1} + m_{2} \leq n_{1}$
\begin{subequations}
\begin{small}
\begin{align}
z_{1}e^{\imath 2\pi{\bf k}\cdot{\bf a}_{3}}e^{\imath 2\pi\rho_{3}{\bf k}\cdot{\bf a}_{2}}e^{-\imath 2\pi{\bf k}\cdot{\bf a}_{2}}e^{(n_{2} - m_{3})\theta_{i, j}}J_{1}{\bf x}_{i} - z_{n_{3}}{\bf x}_{i} = \lambda\delta_{z}z_{n_{3}}{\bf x}_{i}, \label{eq:obtuse_z31}\\
z_{1}e^{\imath 2\pi{\bf k}\cdot{\bf a}_{3}}e^{\imath 2\pi\rho_{3}{\bf k}\cdot{\bf a}_{2}}e^{\imath 2\pi(\rho_{1}\rho_{4} + \psi_{2}){\bf k}\cdot{\bf a}_{1}}e^{-m_{3}\theta_{i, j}}\begin{bmatrix}\begin{smallmatrix}
 & e^{-\imath 2\pi{\bf k}\cdot{\bf a}_{1}I_{m_{1} + m_{2}}}\\
I_{n_{1} - m_{1} - m_{2}} &
\end{smallmatrix}\end{bmatrix}{\bf x}_{i}\notag\\
 - z_{n_{3}}{\bf x}_{i} = \lambda\delta_{z}z_{n_{3}}{\bf x}_{i}.\label{eq:obtuse_z32}
\end{align}
\end{small}
\end{subequations}
\eqref{eq:obtuse_z31} implies that
\begin{subequations}
\begin{align}
z_{1}e^{\imath 2\pi{\bf k}\cdot{\bf a}_{3}}e^{\imath 2\pi\rho_{3}{\bf k}\cdot{\bf a}_{2}}e^{-\imath 2\pi{\bf k}\cdot{\bf a}_{2}}e^{(n_{2} - m_{3})\theta_{i, j}}e^{\imath 2\pi\rho_{2}{\bf k}\cdot{\bf a}_{1}}e^{-\imath 2\pi{\bf k}\cdot{\bf a}_{1}}e^{(n_{1} - m_{2})\theta_{i}} - z_{n_{3}} &= \lambda\delta_{z}z_{n_{3}},\label{eq:obtuse_z311}\\
z_{1}e^{\imath 2\pi{\bf k}\cdot{\bf a}_{3}}e^{\imath 2\pi\rho_{3}{\bf k}\cdot{\bf a}_{2}}e^{-\imath 2\pi{\bf k}\cdot{\bf a}_{2}}e^{(n_{2} - m_{3})\theta_{i, j}}e^{\imath 2\pi\rho_{2}{\bf k}\cdot{\bf a}_{1}}e^{-m_{2}\theta_{i}} - z_{n_{3}} &= \lambda\delta_{z}z_{n_{3}}.\label{eq:obtuse_z312}
\end{align}
\end{subequations}
With the definition of $J_{1}$ which is given in \eqref{eq:J1}, we know that \eqref{eq:obtuse_z32} can devide into
\begin{subequations}
\begin{align}
z_{1}e^{\imath 2\pi{\bf k}\cdot{\bf a}_{3}}e^{\imath 2\pi\rho_{3}{\bf k}\cdot{\bf a}_{2}}e^{\imath 2\pi(\rho_{1}\rho_{4} + \psi_{2}){\bf k}\cdot{\bf a}_{1}}e^{-m_{3}\theta_{i, j}}e^{-\imath 2\pi{\bf k}\cdot{\bf a}_{1}}e^{(n_{1} - m_{1} - m_{2})\theta_{i}} - z_{n_{3}} &= \lambda\delta_{z}z_{n_{3}}, \label{eq:obtuse_z321}\\
z_{1}e^{\imath 2\pi{\bf k}\cdot{\bf a}_{3}}e^{\imath 2\pi\rho_{3}{\bf k}\cdot{\bf a}_{2}}e^{\imath 2\pi(\rho_{1}\rho_{4} + \psi_{2}){\bf k}\cdot{\bf a}_{1}}e^{-m_{3}\theta_{i, j}}e^{-(m_{1} + m_{2})\theta_{i}} - z_{n_{3}} &= \lambda\delta_{z}z_{n_{3}}.\label{eq:obtuse_z322}
\end{align}
\end{subequations}
Using the definitions of $\theta_{i}$ and $\theta_{i, j}$ in \eqref{eq:ewK1} and \eqref{eq:ewK2}, respectively, the exponents in \eqref{eq:obtuse_z311}, \eqref{eq:obtuse_z312}, \eqref{eq:obtuse_z321}, and \eqref{eq:obtuse_z322} satisfy
\begin{subequations}
\begin{align}
&\imath2\pi\{{\bf k}\cdot{\bf a}_{3} + \rho_{3}{\bf k}\cdot{\bf a}_{2} - {\bf k}\cdot{\bf a}_{2} + \rho_{2}{\bf k}\cdot{\bf a}_{1} - {\bf k}\cdot{\bf a}_{1}\notag\\
&\hspace{3cm} + \tfrac{n_{2} - m_{3}}{n_{2}}(j\! -\! \tfrac{m_{1}}{n_{1}}i\! +\! {\bf k}\cdot({\bf a}_{2}\! -\! \tfrac{m_{1}}{n_{1}}{\bf a}_{1}\! +\! \rho_{1}{\bf a}_{1}))\! +\! \tfrac{n_{1} - m_{2}}{n_{1}}(i\! +\! {\bf k}\cdot{\bf a}_{1})\}\notag\\
=& \imath 2\pi\{{\bf k}\cdot[{\bf a}_{3}\! +\! \rho_{3}{\bf a}_{2}\! -\! {\bf a}_{2}\! +\! \tfrac{n_{2} - m_{3}}{n_{2}}{\bf a}_{2}\! -\! \tfrac{n_{2} - m_{3}}{n_{2}}\tfrac{m_{1}}{n_{1}}{\bf a}_{1}\! +\! \tfrac{n_{2} - m_{3}}{n_{2}}\rho_{1}{\bf a}_{1}\! +\! \rho_{2}{\bf a}_{1}\! -\! {\bf a}_{1}\! +\! \tfrac{n_{1} - m_{2}}{n_{1}}{\bf a}_{1}]\notag\\
& \hspace{3cm}+ \tfrac{n_{2} - m_{3}}{n_{2}}j - \tfrac{n_{2} - m_{3}}{n_{2}}\tfrac{m_{1}}{n_{1}}i + \tfrac{n_{1} - m_{2}}{n_{1}}i\}\notag\\
=& \imath2\pi\{{\bf k}\cdot[{\bf a}_{3} - \tfrac{m_{3}}{n_{2}}{\bf a}_{2} - \tfrac{n_{2} - m_{3}}{n_{2}}\tfrac{m_{1}}{n_{1}}{\bf a}_{1}  - \tfrac{m_{2}}{n_{1}}{\bf a}_{1}+ \rho_{3}{\bf a}_{2} - \tfrac{m_{3}}{n_{2}}\rho_{1}{\bf a}_{1} + \rho_{1}{\bf a}_{1} + \rho_{2}{\bf a}_{1}]\notag\\
& \hspace{3cm} -\tfrac{m_{3}}{n_{2}}j - \tfrac{n_{2} - m_{3}}{n_{2}}\tfrac{m_{1}}{n_{1}}i - \tfrac{m_{2}}{n_{1}}i\} + \imath 2\pi(i + j),\label{eq:obtuse_z311e}\\
&\imath 2\pi\{{\bf k}\cdot{\bf a}_{3}\!\! +\!\! \rho_{3}{\bf k}\cdot{\bf a}_{2}\!\! -\!\! {\bf k}\cdot{\bf a}_{2}\!\! +\!\! \rho_{2}{\bf k}\cdot{\bf a}_{1}\!\!+\!\! \tfrac{n_{2} - m_{3}}{n_{2}}(j\!\! -\!\! \tfrac{m_{1}}{n_{1}}i\!\! +\!\! {\bf k}\cdot({\bf a}_{2}\!\! -\!\! \tfrac{m_{1}}{n_{1}}{\bf a}_{1}\!\! +\!\! \rho_{1}{\bf a}_{1}))\!\! -\!\! \tfrac{m_{2}}{n_{1}}(i\!\! +\!\! {\bf k}\cdot{\bf a}_{1})\}\notag\\
=& \imath 2\pi\{{\bf k}\cdot[{\bf a}_{3} + \rho_{3}{\bf a}_{2} - {\bf a}_{2} + \tfrac{n_{2} - m_{3}}{n_{2}}{\bf a}_{2} - \tfrac{n_{2} - m_{3}}{n_{2}}\tfrac{m_{1}}{n_{1}}{\bf a}_{1} + \tfrac{n_{2} - m_{3}}{n_{2}}\rho_{1}{\bf a}_{1}  + \rho_{2}{\bf a}_{1} - \tfrac{m_{2}}{n_{1}}{\bf a}_{1}]\notag\\
&\hspace{3cm} + \tfrac{n_{2}- m_{3}}{n_{2}}j - \tfrac{n_{2}-m_{3}}{n_{2}}\tfrac{m_{1}}{n_{1}}i - \tfrac{m_{2}}{n_{1}}i\}\notag\\
=& \imath 2\pi\{{\bf k}\cdot[{\bf a}_{3} - \tfrac{m_{3}}{n_{2}}{\bf a}_{2} - \tfrac{n_{2}-m_{3}}{n_{2}}\tfrac{m_{1}}{n_{1}}{\bf a}_{1} - \tfrac{m_{2}}{n_{1}}{\bf a}_{1} + \rho_{3}{\bf a}_{2} - \tfrac{m_{3}}{n_{2}}\rho_{1}{\bf a}_{1}+ \rho_{1}{\bf a}_{1} + \rho_{2}{\bf a}_{1} ]\notag\\
&\hspace{3cm} -\tfrac{m_{3}}{n_{2}}j - \tfrac{n_{2}-m_{3}}{n_{2}}\tfrac{m_{1}}{n_{1}}i - \tfrac{m_{2}}{n_{1}}i\} + \imath 2\pi j,\label{eq:obtuse_z312e}\\
& \imath 2\pi\{{\bf k}\cdot{\bf a}_{3} + \rho_{3}{\bf k}\cdot{\bf a}_{2}  + (\rho_{1}\rho_{4} + \psi_{2}){\bf k}\cdot{\bf a}_{1} - {\bf k}\cdot{\bf a}_{1}\notag\\
&\hspace{3cm} -\! \tfrac{m_{3}}{n_{2}}(j\! -\! \tfrac{m_{1}}{n_{1}}i\! +\! {\bf k}\cdot({\bf a}_{2}\! -\! \tfrac{m_{1}}{n_{1}}{\bf a}_{1}\! +\! \rho_{1}{\bf a}_{1}))\! +\! \tfrac{n_{1}-m_{1}-m_{2}}{n_{1}}(i\! +\! {\bf k}\cdot{\bf a}_{1})\}\notag\\
=& \imath 2\pi\{{\bf k}\cdot[{\bf a}_{3}\! +\! \rho_{3}{\bf a}_{2}\! +\! (\rho_{1}\rho_{4}\! +\! \psi_{2}){\bf a}_{1}\! -\! \tfrac{m_{3}}{n_{2}}{\bf a}_{2}\! +\! \tfrac{m_{3}}{n_{2}}\tfrac{m_{1}}{n_{1}}{\bf a}_{1}\! -\! \tfrac{m_{3}}{n_{2}}\rho_{1}{\bf a}_{1}\! -\! {\bf a}_{1}\! +\! \tfrac{n_{1} - m_{1} - m_{2}}{n_{1}}{\bf a}_{1}]\notag\\
&\hspace{3cm} - \tfrac{m_{3}}{n_{2}}j +\tfrac{m_{3}}{n_{2}}\tfrac{m_{1}}{n_{1}}i + \tfrac{n_{1}-m_{1}-m_{2}}{n_{1}}i\}\notag\\
=& \imath 2\pi\{{\bf k}\cdot[{\bf a}_{3} - \tfrac{m_{3}}{n_{2}}{\bf a}_{2} - \tfrac{n_{2}-m_{3}}{n_{2}}\tfrac{m_{1}}{n_{1}}{\bf a}_{1} - \tfrac{m_{2}}{n_{1}}{\bf a}_{1} + \rho_{3}{\bf a}_{2} - \tfrac{m_{3}}{n_{2}}\rho_{1}{\bf a}_{1} + (\rho_{1}\rho_{4} + \psi_{2}){\bf a}_{1}]\notag\\
&\hspace{3cm} - \tfrac{m_{3}}{n_{2}}j - \tfrac{n_{2}-m_{3}}{n_{2}}\tfrac{m_{1}}{n_{1}}i - \tfrac{m_{2}}{n_{1}}i\} + \imath 2\pi i,\label{eq:obtuse_z321e}\\
& \imath 2\pi\{{\bf k}\cdot{\bf a}_{3} + \rho_{3}{\bf k}\cdot{\bf a}_{2} + (\rho_{1}\rho_{4} + \psi_{2}){\bf k}\cdot{\bf a}_{1} - \tfrac{m_{1} + m_{2}}{n_{1}}(i + {\bf k}\cdot{\bf a}_{1})\}\notag\\
&\hspace{3cm} - \tfrac{m_{3}}{n_{2}}(j - \tfrac{m_{1}}{n_{1}}i + {\bf k}\cdot({\bf a}_{2} - \tfrac{m_{1}}{n_{1}}{\bf a}_{1} + \rho_{1}{\bf a}_{1}))\notag\\
=& \imath 2\pi\{{\bf k}\cdot[{\bf a}_{3} + \rho_{3}{\bf a}_{2} + (\rho_{1}\rho_{4} + \psi_{2}){\bf a}_{1} - \tfrac{m_{3}}{n_{2}}{\bf a}_{2} + \tfrac{m_{3}}{n_{2}}\tfrac{m_{1}}{n_{1}}{\bf a}_{1}-\tfrac{m_{3}}{n_{2}}\rho_{1}{\bf a}_{1} - \tfrac{m_{1} + m_{2}}{n_{1}}{\bf a}_{1}]\notag\\
&\hspace{3cm} - \tfrac{m_{3}}{n_{2}}j + \tfrac{m_{3}}{n_{2}}\tfrac{m_{1}}{n_{1}}i - \tfrac{m_{1} + m_{2}}{n_{1}}i\}\notag\\
=& \imath 2\pi\{{\bf k}\cdot[{\bf a}_{3} - \tfrac{m_{3}}{n_{2}}{\bf a}_{2} - \tfrac{n_{2}-m_{3}}{n_{2}}\tfrac{m_{1}}{n_{1}}{\bf a}_{1} - \tfrac{m_{2}}{n_{1}}{\bf a}_{1} + \rho_{3}{\bf a}_{2} - \tfrac{m_{3}}{n_{2}}\rho_{1}{\bf a}_{1} + (\rho_{1}\rho_{4} + \psi_{2}){\bf a}_{1}]\notag\\
&\hspace{3cm} - \tfrac{m_{3}}{n_{2}}j - \tfrac{n_{2}-m_{3}}{n_{2}}\tfrac{m_{1}}{n_{1}}i - \tfrac{m_{2}}{n_{1}}i\}.\label{eq:obtuse_z322e}
\end{align}
\end{subequations}
We have to check whether $\rho_{1} + \rho_{2} = \rho_{1}\rho_{4} + \psi_{2}$:
\begin{itemize}
\item If $\rho_{1} = 0$ and $\rho_{2} = 0$, since $\psi_{2} = 0$, we have $\rho_{1} + \rho_{2} = 0 = \rho_{1}\rho_{4} + \psi_{2}$.
\item If $\rho_{1} = 0$ and $\rho_{2} = 1$, since $\psi_{2} = 1$, we have $\rho_{1} + \rho_{2} = 1 = \rho_{1}\rho_{4} + \psi_{2}$.
\item If $\rho_{1} = 1$ and $\rho_{2} = 0$, since $\psi_{2} = 1$, $\rho_{4} = 0$, we have $\rho_{1} + \rho_{2} = 1 = \rho_{1}\rho_{4} + \psi_{2}$.
\item If $\rho_{1} = 1$ and $\rho_{2} = 1$, since $\psi_{2} = 1$, $\rho_{4} = 1$, we have $\rho_{1} + \rho_{2} = 2 = \rho_{1}\rho_{4} + \psi_{2}$.
\end{itemize}
Substituting \eqref{eq:obtuse_z311e}, \eqref{eq:obtuse_z312e}, \eqref{eq:obtuse_z321e}, and \eqref{eq:obtuse_z322e} into \eqref{eq:obtuse_z311}, \eqref{eq:obtuse_z312}, \eqref{eq:obtuse_z321}, and \eqref{eq:obtuse_z322}, respectively, we reduce \eqref{eq:obtuse_z3} to be
\begin{align}\label{eq:obtuse_z3_n1}
z_{1}e^{\imath 2\pi\left({\bf k}\cdot\hat{\bf a}_{3} - \frac{m_{3}}{n_{2}}j - \frac{n_{2} - m_{3}}{n_{2}}\frac{m_{1}}{n_{1}}i - \tfrac{m_{2}}{n_{1}}i\right)} - z_{n_{3}} = \lambda\delta_{z}z_{n_{3}}.
\end{align}
(2) $m_{1} + m_{2} > n_{1}$
\begin{subequations}
\begin{align}
z_{1}e^{\imath 2\pi{\bf k}\cdot{\bf a}_{3}}e^{\imath 2\pi\rho_{3}{\bf k}\cdot{\bf a}_{2}}e^{-\imath 2\pi{\bf k}\cdot{\bf a}_{2}}e^{(n_{2}-m_{3})\theta_{i, j}}J_{1}{\bf x}_{i} - z_{n_{3}}{\bf x}_{i} = \lambda\delta_{z}z_{n_{3}}{\bf x}_{i},\label{eq:obtuse_z33}\\
z_{1}e^{\imath 2\pi{\bf k}\cdot{\bf a}_{3}}e^{\imath 2\pi\rho_{3}{\bf k}\cdot{\bf a}_{2}}e^{-\imath 2\pi(\rho_{5}\rho_{4} - \psi_{2}){\bf k}\cdot{\bf a}_{1}}e^{-m_{3}\theta_{i, j}}\begin{bmatrix}\begin{smallmatrix}
& e^{-\imath 2\pi{\bf k}\cdot{\bf a}_{1}}I_{m_{1} + m_{2} - n_{1}}\\
I_{2n_{1} - m_{1} - m_{2}} & 
\end{smallmatrix}\end{bmatrix}{\bf x}_{i}\notag\\
 - z_{n_{3}}{\bf x}_{i} = \lambda\delta_{z}z_{n_{3}}{\bf x}_{i}.\label{eq:obtuse_z34}
\end{align}
\end{subequations}
With the definition of $J_{1}$ which is mentioned in \eqref{eq:J1}, \eqref{eq:obtuse_z33} implies that \eqref{eq:obtuse_z311} and \eqref{eq:obtuse_z312}. \eqref{eq:obtuse_z34} can be devided into
\begin{subequations}
\begin{align}
z_{1}e^{\imath 2\pi{\bf k}\cdot{\bf a}_{3}}e^{\imath 2\pi\rho_{3}{\bf k}\cdot{\bf a}_{2}}e^{-\imath 2\pi(\rho_{5}\rho_{4} - \psi_{2}){\bf k}\cdot{\bf a}_{1}}e^{- m_{3}\theta_{i, j}}e^{-\imath 2\pi{\bf k}\cdot{\bf a}_{1}}e^{(2n_{1} - m_{1} - m_{2})\theta_{i}} - z_{n_{3}} &= \lambda\delta_{z}z_{n_{3}},\label{eq:obtuse_z341}\\
z_{1}e^{\imath 2\pi{\bf k}\cdot{\bf a}_{3}}e^{\imath 2\pi\rho_{3}{\bf k}\cdot{\bf a}_{2}}e^{-\imath 2\pi(\rho_{5}\rho_{4} - \psi_{2}){\bf k}\cdot{\bf a}_{1}}e^{- m_{3}\theta_{i, j}}e^{-(m_{1} + m_{2} - n_{1})\theta_{i}} - z_{n_{3}} &= \lambda\delta_{z}z_{n_{3}}.\label{eq:obtuse_z342}
\end{align}
\end{subequations}
Using the definitions of $\theta_{i}$ and $\theta_{i, j}$ in \eqref{eq:ewK1} and \eqref{eq:ewK2}, respectively, the exponents in \eqref{eq:obtuse_z341} and \eqref{eq:obtuse_z342} satisfy
\begin{subequations}
\begin{align}
&\imath 2\pi\{{\bf k}\cdot{\bf a}_{3} + \rho_{3}{\bf k}\cdot{\bf a}_{2} - (\rho_{5}\rho_{4} - \psi_{2}){\bf k}\cdot{\bf a}_{1} - \tfrac{m_{3}}{n_{2}}(j - \tfrac{m_{1}}{n_{1}}i + {\bf k}\cdot({\bf a}_{2} - \tfrac{m_{1}}{n_{1}}{\bf a}_{1} + \rho_{1}{\bf a}_{1}))\notag\\
&\hspace{3cm} - {\bf k}\cdot{\bf a}_{1} + \tfrac{2n_{1} - m_{1} - m_{2}}{n_{1}}(i + {\bf k}\cdot{\bf a}_{1})\}\notag\\
=& \imath 2\pi\{{\bf k}\cdot[{\bf a}_{3}\! +\! \rho_{3}{\bf a}_{2}\! -\! (\rho_{5}\rho_{4}\! -\! \psi_{2}){\bf a}_{1}\! -\! \tfrac{m_{3}}{n_{2}}{\bf a}_{2}\! +\! \tfrac{m_{3}}{n_{2}}\tfrac{m_{1}}{n_{1}}{\bf a}_{1}\! -\!  \tfrac{m_{3}}{n_{2}}\rho_{1}{\bf a}_{1}\! -\! {\bf a}_{1}\! +\! \tfrac{2n_{1} - m_{1} - m_{2}}{n_{1}}{\bf a}_{1}]\notag\\
&\hspace{3cm}- \tfrac{m_{3}}{n_{2}}j + \tfrac{m_{3}}{n_{2}}\tfrac{m_{1}}{n_{1}}i + \tfrac{2n_{1} - m_{1} - m_{2}}{n_{1}}i\}\notag\\
=& \imath 2\pi\{{\bf k}\cdot[{\bf a}_{3} - \tfrac{m_{3}}{n_{2}}{\bf a}_{2} - \tfrac{n_{2} - m_{3}}{n_{2}}\tfrac{m_{1}}{n_{1}}{\bf a}_{1} - \tfrac{m_{2}}{n_{1}}{\bf a}_{1} + {\bf a}_{1} + \rho_{3}{\bf a}_{2} - (\rho_{5}\rho_{4} - \psi_{2}){\bf a}_{1} - \tfrac{m_{3}}{n_{2}}\rho_{1}{\bf a}_{1}]\notag\\
&\hspace{3cm} - \tfrac{m_{3}}{n_{2}}j - \tfrac{n_{2} - m_{3}}{n_{2}}\tfrac{m_{1}}{n_{1}}i - \tfrac{m_{2}}{n_{1}}i\} + \imath 2\pi (2i),\label{eq:obtuse_z341e}
\end{align}
\begin{align}
&\imath 2\pi\{{\bf k}\cdot{\bf a}_{3} + \rho_{3}{\bf k}\cdot{\bf a}_{2} - (\rho_{5}\rho_{4} - \psi_{2}){\bf k}\cdot{\bf a}_{1} - \tfrac{m_{3}}{n_{2}}(j - \tfrac{m_{1}}{n_{1}}i + {\bf k}\cdot({\bf a}_{2} - \tfrac{m_{1}}{n_{1}}{\bf a}_{1} + \rho_{1}{\bf a}_{1}))\notag\\
&\hspace{3cm} - \tfrac{m_{1} + m_{2} - n_{1}}{n_{1}}(i + {\bf k}\cdot{\bf a}_{1})\}\notag\\
=& \imath 2\pi\{{\bf k}\cdot[{\bf a}_{3} + \rho_{3}{\bf a}_{2} - (\rho_{5}\rho_{4} - \psi_{2}){\bf a}_{1} - \tfrac{m_{3}}{n_{2}}{\bf a}_{2} + \tfrac{m_{3}}{n_{2}}\tfrac{m_{1}}{n_{1}}{\bf a}_{1} - \tfrac{m_{3}}{n_{2}}\rho_{1}{\bf a}_{1} - \tfrac{m_{1} + m_{2} - n_{1}}{n_{1}}{\bf a}_{1}]\notag\\
&\hspace{3cm} - \tfrac{m_{3}}{n_{2}}j + \tfrac{m_{3}}{n_{2}}\tfrac{m_{1}}{n_{1}}i - \tfrac{m_{1} + m_{2} - n_{1}}{n_{1}}i\}\notag\\
=& \imath 2\pi\{{\bf k}\cdot[{\bf a}_{3} - \tfrac{m_{3}}{n_{2}}{\bf a}_{2} - \tfrac{n_{2}-m_{3}}{n_{2}}\tfrac{m_{1}}{n_{1}}{\bf a}_{1} - \tfrac{m_{2}}{n_{1}}{\bf a}_{1} + {\bf a}_{1} + \rho_{3}{\bf a}_{2} - \tfrac{m_{3}}{n_{2}}\rho_{1}{\bf a}_{1} - (\rho_{5}\rho_{4} - \psi_{2}){\bf a}_{1}]\notag\\
&\hspace{3cm} - \tfrac{m_{3}}{n_{2}}j - \tfrac{n_{2} - m_{3}}{n_{2}}\tfrac{m_{1}}{n_{1}}i - \tfrac{m_{2}}{n_{1}}i\} + \imath 2\pi i.\label{eq:obtuse_z342e}
\end{align}
\end{subequations}
We have to check whether $\rho_{1} + \rho_{2} = \psi_{2} - \rho_{5}\rho_{4} + 1$:
\begin{itemize}
\item If $\rho_{1} = 0$ and $\rho_{2} = 0$, since $\psi_{2} = 0$, $\rho_{4} = 1$, $\rho_{5} = 1$, we have $\rho_{1} + \rho_{2} = 0 = \psi_{2} - \rho_{5}\rho_{4} + 1$.
\item If $\rho_{1} = 0$ and $\rho_{2} = 1$, since $\psi_{2} = 0$, $\rho_{4} = 0$, $\rho_{5} = 0$, we have $\rho_{1} + \rho_{2} = 1 = \psi_{2} - \rho_{5}\rho_{4} + 1$.
\item If $\rho_{1} = 1$ and $\rho_{2} = 0$, since $\psi_{2} = 0$, $\rho_{4} = 0$, $\rho_{5} = 0$, we have $\rho_{1} + \rho_{2} = 1 = \psi_{2} - \rho_{5}\rho_{4} + 1$.
\item If $\rho_{1} = 1$ and $\rho_{2} = 1$, since $\psi_{2} = 1$, $\rho_{4} = 0$, $\rho_{5} = 0$, we have $\rho_{1} + \rho_{2} = 2 = \psi_{2} - \rho_{5}\rho_{4} + 1$.
\end{itemize}
Substituting \eqref{eq:obtuse_z311e}, \eqref{eq:obtuse_z312e}, \eqref{eq:obtuse_z341e}, and \eqref{eq:obtuse_z342e} into \eqref{eq:obtuse_z311}, \eqref{eq:obtuse_z312}, \eqref{eq:obtuse_z341}, and \eqref{eq:obtuse_z342}, respectively, we reduce \eqref{eq:obtuse_z3} to be
\begin{align}\label{eq:obtuse_z3_m1m2}
z_{1}e^{\imath 2\pi\left({\bf k}\cdot\hat{\bf a}_{3} - \frac{m_{3}}{n_{2}}j - \frac{n_{2} - m_{3}}{n_{2}}\frac{m_{1}}{n_{1}}i - \tfrac{m_{2}}{n_{1}}i\right)} - z_{n_{3}} = \lambda\delta_{z}z_{n_{3}}.
\end{align}
Applying Theorem \ref{thm:eigK1} and combining the results in \eqref{eq:obtuse_z1}, \eqref{eq:obtuse_z2} with \eqref{eq:obtuse_z3_n1},  \eqref{eq:obtuse_z3_m1m2}, we obtain $\lambda = \delta_{z}^{-1}(e^{\theta_{i, j, k}} - 1)$ and $z_{s+1} = e^{s\theta_{i, j, k}}$ for $s = 0, 1, \cdots, n_{3}$.
\end{proof}


\subsection{Eigen-decomposition of the partial derivative operators}

By Theorem \ref{thm:eigK3_acute} and Theorem \ref{thm:eigK3_obtuse}, we can observe that the matrix representations of $K_{3}$ has 4 general forms for various lattice structures, but the eigen-decomposition of $K_{3}$ has only two situations, the difference depends on the value $\cos\theta_{\alpha} - \cos\theta_{\beta}\cos\theta_{\gamma}$. Since the matrices $C_{1}$, $C_{2}$, and $C_{3}$ are commute to each other, they can be diagonalized by the same matrix, now we use the same argument in \cite{hhlw2013eig} to this matrix. 

Let
\begin{align}\label{eq:T}
T = \dfrac{1}{\sqrt{n}}\begin{bmatrix}
T_{1} & T_{2} & \cdots & T_{n_{1}}
\end{bmatrix}\in \mathbb{C}^{n\times n}
\end{align}
where
\begin{align*}
T_{i} = \begin{bmatrix}
T_{i, 1} & T_{i, 2} & \cdots & T_{i, n_{2}}
\end{bmatrix}\in \mathbb{C}^{n \times (n_{2}n_{3})}
\end{align*}
and
\begin{align*}
T_{i, j} = \begin{bmatrix}
{\bf z}_{i, j, 1}\otimes{\bf y}_{i, j}\otimes {\bf x}_{i} & {\bf z}_{i, j, 2}\otimes{\bf y}_{i, j}\otimes {\bf x}_{i} & \cdots & {\bf z}_{i, j, n_{3}}\otimes{\bf y}_{i, j}\otimes {\bf x}_{i}
\end{bmatrix}\in \mathbb{C}^{n \times n_{3}}
\end{align*}
for $i = 1, \cdots, n_{1}$ and $j = 1, \cdots, n_{2}$. By the definition of ${\bf x}_{i}$, ${\bf y}_{i, j}$, and ${\bf z}_{i, j, k}$ in \eqref{eq:evK1}, \eqref{eq:evK2}, \eqref{eq:acute_evK3}, and \eqref{eq:obtuse_evK3}, one can prove that the matrix $T$ is unitary. We rewrite the eigenpairs of $K_{1}$, $K_{2}$, and $K_{3}$ to be
\begin{subequations}
\begin{small}
\begin{align}
\theta_{i} &= \dfrac{\imath2\pi i}{n_{1}}+\dfrac{\imath2\pi{\bf k}\cdot{\bf a}_{1}}{n_{1}}\equiv \theta_{n_{1}, i}+\theta_{{\bf a}_{1}},\\
\theta_{i, j} &= \dfrac{\imath2\pi j}{n_{2}}+\dfrac{\imath2\pi}{n_{2}}\left\{{\bf k}\cdot\hat{\bf a}_{2} - \frac{m_{1}}{n_{1}}i\right\}\equiv \theta_{n_{2}, j}+\theta_{\hat{\bf a}_{2}, i},\\
\theta_{i, j, k} &= \dfrac{\imath2\pi k}{n_{3}}+\dfrac{\imath2\pi}{n_{3}}\left\{{\bf k}\cdot\hat{\bf a}_{3} - \frac{m_{3}}{n_{2}}j + \frac{m_{1}m_{3}-n_{2}m_{2}}{n_{1}n_{2}}i\right\}\equiv \theta_{n_{3}, k}+\theta_{\hat{\bf a}_{3}, i, j}
\end{align}
\end{small}
if $\cos\theta_{\alpha} - \cos\theta_{\beta}\cos\theta_{\gamma} \leq 0$ and
\begin{small}
\begin{align}
\theta_{i, j, k} &= \dfrac{\imath2\pi k}{n_{3}}+\dfrac{\imath2\pi}{n_{3}}\left\{{\bf k}\cdot\hat{\bf a}_{3} - \frac{m_{3}}{n_{2}}j + \frac{m_{1}m_{3}-n_{2}m_{1}-n_{2}m_{2}}{n_{1}n_{2}}i\right\}\equiv \theta_{n_{3}, k}+\theta_{\hat{\bf a}_{3}, i, j}
\end{align}
\end{small}
if $\cos\theta_{\alpha} - \cos\theta_{\beta}\cos\theta_{\gamma} > 0$,
\end{subequations}
\begin{subequations}
\begin{align}
{\bf x}_{i} &= D_{{\bf a}_{1}}\begin{bmatrix}
1 & e^{\theta_{n_{1}, i}} & \cdots & e^{(n_{1}-1)\theta_{n_{1}, i}}
\end{bmatrix}^{\top} \equiv D_{{\bf a}_{1}}{\bf u}_{n_{1}, i},\\
{\bf y}_{i, j} &= D_{\hat{\bf a}_{2}, i}\begin{bmatrix}
1 & e^{\theta_{n_{2}, j}} & \cdots & e^{(n_{2}-1)\theta_{n_{2}, j}}
\end{bmatrix}^{\top} \equiv D_{\hat{\bf a}_{2}, i}{\bf u}_{n_{2}, j},\\
{\bf z}_{i, j, k} &= D_{\hat{\bf a}_{3}, i, j}\begin{bmatrix}
1 & e^{\theta_{n_{3}, k}} & \cdots & e^{(n_{3}-1)\theta_{n_{3}, k}}
\end{bmatrix}^{\top} \equiv D_{\hat{\bf a}_{3}, i, j}{\bf u}_{n_{3}, k},
\end{align}
\end{subequations}
and
\begin{subequations}\label{eq:diagonal_matrix}
\begin{align}
D_{{\bf a}_{1}} &= \mbox{diag}\left(1, e^{\theta_{{\bf a}_{1}}}, \cdots, e^{(n_{1}-1)\theta_{{\bf a}_{1}}}\right),\\
D_{\hat{\bf a}_{2}, i} &= \mbox{diag}\left(1, e^{\theta_{\hat{\bf a}_{2}, i}}, \cdots, e^{(n_{2}-1)\theta_{\hat{\bf a}_{2}, i}}\right),\\
D_{\hat{\bf a}_{3}, i, j} &= \mbox{diag}\left(1, e^{\theta_{\hat{\bf a}_{3}, i, j}}, \cdots, e^{(n_{3}-1)\theta_{\hat{\bf a}_{3}, i, j}}\right),
\end{align}
\end{subequations}
for $i = 1, \cdots, n_{1}$, $j = 1, \cdots, n_{2}$, and $k = 1, \cdots, n_{3}$. We denote
\begin{subequations}\label{eq:DFT_matrix}
\begin{align}
U_{n_{1}} &= \begin{bmatrix}
{\bf u}_{n_{1}, 1} & \cdots {\bf u}_{n_{1}, n_{1}}
\end{bmatrix}\in \mathbb{C}^{n_{1}\times n_{1}},\\
U_{n_{2}} &= \begin{bmatrix}
{\bf u}_{n_{2}, 1} & \cdots {\bf u}_{n_{2}, n_{2}}
\end{bmatrix}\in \mathbb{C}^{n_{2}\times n_{2}},\\
U_{n_{3}} &= \begin{bmatrix}
{\bf u}_{n_{3}, 1} & \cdots {\bf u}_{n_{3}, n_{3}}
\end{bmatrix}\in \mathbb{C}^{n_{3}\times n_{3}},
\end{align}
\end{subequations}
and
\begin{subequations}\label{eq:eigenvalue_K}
\begin{align}
\Lambda_{n_{1}} &= \dfrac{1}{\delta_{x}}\mbox{diag}\left(e^{\theta_{n_{1}, 1}+\theta_{{\bf a}_{1}}}-1, \cdots, e^{\theta_{n_{1}, n_{1}}+\theta_{{\bf a}_{1}}}-1\right),\\
\Lambda_{i, n_{2}} &= \dfrac{1}{\delta_{y}}\mbox{diag}\left(e^{\theta_{n_{2}, 1}+\theta_{\hat{\bf a}_{2}, i}}-1, \cdots, e^{\theta_{n_{2}, n_{2}}+\theta_{\hat{\bf a}_{2}, i}}-1\right),\\
\Lambda_{i, j, n_{3}} &= \dfrac{1}{\delta_{z}}\mbox{diag}\left(e^{\theta_{n_{3}, 1}+\theta_{\hat{\bf a}_{3}, i, j}}-1, \cdots, e^{\theta_{n_{3}, n_{3}}+\theta_{\hat{\bf a}_{3}, i, j}}-1\right).
\end{align}
\end{subequations}
By the eigen-decompositions of $K_{1}$, $K_{2}$, and $K_{3}$, we have
\begin{align*}
K_{1}(D_{{\bf a}_{1}}U_{n_{1}}) = (D_{{\bf a}_{1}}U_{n_{1}})\Lambda_{n_{1}},\\
K_{2}(D_{\hat{\bf a}_{2}, i}U_{n_{2}}) = (D_{\hat{\bf a}_{2}, i}U_{n_{2}})\Lambda_{i, n_{2}},\\
K_{3}(D_{\hat{\bf a}_{3}, i, j}U_{n_{3}}) = (D_{\hat{\bf a}_{3}, i, j}U_{n_{3}})\Lambda_{i, j, n_{3}}.
\end{align*}
These equations imply that
\begin{align*}
C_{1}T_{i, j} &= (I_{n_{3}}\otimes I_{n_{2}} \otimes K_{1})T_{i, j} = \dfrac{e^{\theta_{n_{1}, i}+\theta_{{\bf a}_{1}, n_{1}}}-1}{\delta_{x}}T_{i, j},\\
C_{2}T_{i} &= (I_{n_{3}}\otimes K_{2})T_{i} = T_{i}(\Lambda_{i, n_{2}}\otimes I_{n_{3}}),\\
C_{3}T_{i, j} &= K_{3}T_{i, j} = T_{i, j}\Lambda_{i, j, n_{3}},
\end{align*}
for $i = 1, \cdots, n_{1}$ and $j = 1, \cdots, n_{2}$. Therefore, we found that $C_{1}$, $C_{2}$, and $C_{3}$ are simultaneous diagonalizable by $T$, that is,
\begin{align*}
C_{1}T = T\Lambda_{1}, \hspace{1em} C_{2}T = T\Lambda_{2}, \hspace{1em}\mbox{and}\hspace{1em} C_{3}T = T\Lambda_{3},
\end{align*}
where $\Lambda_{1} = \Lambda_{n_{1}}\otimes I_{n_{2}}\otimes I_{n_{3}}$, $\Lambda_{2} = (\oplus_{i = 1}^{n_{1}}\Lambda_{i, n_{2}})\otimes I_{n_{3}}$, and $\Lambda_{3} = \oplus_{i = 1}^{n_{1}}\oplus_{j = 1}^{n_{2}}\Lambda_{i, j, n_{3}}$.

It is worth noting that $U_{n_{1}}$, $U_{n_{2}}$, and $U_{n_{3}}$ in \eqref{eq:DFT_matrix} are the inverse of discrete Fourier matrices, and $D_{{\bf a}_{1}}$, $D_{\hat{\bf a}_{2}, i}$, and $D_{\hat{\bf a}_{3}, i, j}$ in \eqref{eq:diagonal_matrix} are diagonal matrices, so we can use the FFT to accelerate the numerical computation. However, it is a little different to the simple cases, we will introduce the process in next section.

\section{Singular Value Decomposition and the Fast Solver}\label{section5}

Our main goal is to solve a sequence of generalized eigenvalue problems (GEP)
\begin{align}\label{eq:GEP}
A{\bf x} = \lambda B{\bf x}, \hspace{1em}A = C^{\ast}C.
\end{align}
Huang et al. \cite{hhlw2013eig} provide a powerful scheme to solve this GEP, however, it is only suitable for SC lattice and FCC lattice. We will extend this scheme to all of the crystals. So far, we have already derive the matrix representations of discrete curl operators, and found the eigen-decompositions of discrete partial derivative operators for various lattice structures. In this section, we will briefly introduce some important results in \cite{chhl2015} and \cite{hhlw2013eig} to finish our work.

\subsection{Singular value decompositions for discrete curl operators}

Base on the eigen-decompositions of discrete partial derivative operators, we wish to derive the SVD for discrete curl operators. It is well known that the SVD of curl $C$ is relative to $C^{\ast}C$ and $CC^{\ast}$, therefore we first derive the eigen-decompositions of $C^{\ast}C$ and $CC^{\ast}$. We denote
\begin{subequations}
\begin{align}
\Lambda_{q} = \Lambda_{1}^{\ast}\Lambda_{1} + \Lambda_{2}^{\ast}\Lambda_{2} + \Lambda_{3}^{\ast}\Lambda_{3},
\end{align}
and
\begin{align}
\Lambda_{p} = (\Lambda_{1} + \Lambda_{2} + \Lambda_{3})(\Lambda_{1} + \Lambda_{2} + \Lambda_{3})^{\ast}.
\end{align}
\end{subequations}
By the definition of $\Lambda_{1}$, $\Lambda_{2}$, and $\Lambda_{3}$ in last section, we know that $\Lambda_{q}$ and $\Lambda_{p}$ are depend on the Block vector $2\pi{\bf k}$. If $2\pi{\bf k} \neq 0$ belong to the irreducible Brillouin zone, then $\Lambda_{q}$ is positive definite and $\Lambda_{p}$ is of full column rank. Using the techniques in \cite{chhl2015}, we can find the null space and range space of $C^{\ast}C$ and $CC^{\ast}$. Denote
\begin{subequations}\label{eq:null_basis}
\begin{align}
Q_{0} = (I_{3}\otimes T)\begin{bmatrix}
\Lambda_{1}\\
\Lambda_{2}\\
\Lambda_{3}
\end{bmatrix}\Lambda_{q}^{-\tfrac{1}{2}} \equiv (I_{3}\otimes T)\Pi_{0} \in \mathbb{C}^{3n \times n},
\end{align}
and
\begin{align}
P_{0} = (I_{3}\otimes T)\overline{\Pi_{0}} \in \mathbb{C}^{3n \times n}.
\end{align}
\end{subequations}
Since
\begin{align*}
C^{\ast}CQ_{0} = C^{\ast}\begin{bmatrix}
0 & -C_{3} & C_{2}\\
C_{3} & 0 & -C_{1}\\
-C_{2} & C_{1} & 0
\end{bmatrix}\begin{bmatrix}
T\Lambda_{1}\\
T\Lambda_{2}\\
T\Lambda_{3}
\end{bmatrix}= C^{\ast}\begin{bmatrix}
0 & -C_{3} & C_{2}\\
C_{3} & 0 & -C_{1}\\
-C_{2} & C_{1} & 0
\end{bmatrix}\begin{bmatrix}
C_{1}T\\
C_{2}T\\
C_{3}T
\end{bmatrix} = 0,
\end{align*}
and
\begin{align*}
Q_{0}^{\ast}Q_{0} = I_{n},
\end{align*}
Then $Q_{0}$ forms an orthonormal basis of the null space of $C^{\ast}C$. Similarly, $P_{0}$ forms an orthonormal basis of the null space of $CC^{\ast}$. Let $T_{1} = \begin{bmatrix}
T^{\top} & T^{\top} & T^{\top}
\end{bmatrix}^{\top}$ be a full column rank matrix. Taking the orthogonal projection of $T_{1}$ with respect to $Q_{0}$ and $P_{0}$, respectively, we get
\begin{subequations}\label{eq:range_basis1}
\begin{align}
Q_{1} &= (I - Q_{0}Q_{0}^{\ast})T_{1}(\Lambda_{p}^{\ast}\Lambda_{p}\Lambda_{q}^{-1})^{-\tfrac{1}{2}}\notag\\
&= (I_{3}\otimes T)\begin{bmatrix}
\Lambda_{q} - \Lambda_{1}\Lambda_{s}^{\ast}\\
\Lambda_{q} - \Lambda_{2}\Lambda_{s}^{\ast}\\
\Lambda_{q} - \Lambda_{3}\Lambda_{s}^{\ast}
\end{bmatrix}(\Lambda_{p}^{\ast}\Lambda_{p}\Lambda_{q})^{-\tfrac{1}{2}}\notag\\
&\equiv (I_{3}\otimes T)\Pi_{1} \in \mathbb{C}^{3n \times n},\\
P_{1} &= (I - P_{0}P_{0}^{\ast})T_{1}(\Lambda_{p}^{\ast}\Lambda_{p}\Lambda_{q}^{-1})^{-\tfrac{1}{2}} = (I_{3}\otimes T)\overline{\Pi_{1}}\in \mathbb{C}^{3n \times n}.
\end{align}
\end{subequations}
Then $Q_{0}$ and $P_{0}$ belong to the range space of $C^{\ast}C$ and $CC^{\ast}$, respectively. These matrices are orthonormal and satisfy $(C^{\ast}C)Q_{1} = Q_{1}\Lambda_{q}$ and $(CC^{\ast})P_{1} = P_{1}\Lambda_{q}$. The rest part of the orthonormal basis for range space can be defined by 
\begin{subequations}\label{eq:range_basis2}
\begin{align}
Q_{2} &= C^{\ast}T_{1}(\Lambda_{p}^{\ast}\Lambda_{p})^{-\tfrac{1}{2}} = (I_{3}\otimes T)\begin{bmatrix}
\Lambda_{3}^{\ast} - \Lambda_{2}^{\ast}\\
\Lambda_{1}^{\ast} - \Lambda_{3}^{\ast}\\
\Lambda_{2}^{\ast} - \Lambda_{1}^{\ast}
\end{bmatrix}(\Lambda_{p}^{\ast}\Lambda_{p})^{-\tfrac{1}{2}}\notag\\
&\equiv (I_{3}\otimes T)\Pi_{2} \in \mathbb{C}^{3n \times n},\\
P_{2} &= CT_{1}(\Lambda_{p}^{\ast}\Lambda_{p})^{-\tfrac{1}{2}} = (I_{3}\otimes T)(-\overline{\Pi_{2}})\in \mathbb{C}^{3n \times n}.
\end{align}
\end{subequations}
These two matrices satisfy $(C^{\ast}C)Q_{2} = Q_{2}\lambda_{q}$ and $(CC^{\ast}P_{2}) = P_{2}\Lambda_{q}$. Combining equations \eqref{eq:null_basis}, \eqref{eq:range_basis1}, and \eqref{eq:range_basis2}, we define
\begin{align}
Q &\equiv \begin{bmatrix}
Q_{1} & Q_{2} & Q_{0}
\end{bmatrix} = (I_{3}\otimes T)\begin{bmatrix}
\Pi_{1} & \Pi_{2} & \Pi_{0}
\end{bmatrix},\\
P &\equiv \begin{bmatrix}
P_{2} & P_{1} & P_{0}
\end{bmatrix} = (I_{3}\otimes T)\begin{bmatrix}
-\overline{\Pi_{2}} & \overline{\Pi_{1}} & \overline{\Pi_{0}}
\end{bmatrix}.
\end{align}
Then $Q$ and $P$ are unitary, and the eigen-decompositions of $C^{\ast}C$ and $CC^{\ast}$ are in the form
\begin{align}\label{eq:double_eigendecomposition}
C^{\ast}C = Q\mbox{diag}(\Lambda_{q}, \Lambda_{q}, 0)Q^{\ast} \hspace{1em} \mbox{and}\hspace{1em} CC^{\ast} = P\mbox{diag}(\Lambda_{q}, \Lambda_{q}, 0)P^{\ast}.
\end{align}

\begin{theorem}[\cite{chhl2015}]
By the eigen-decompositions of $C^{\ast}C$ and $CC^{\ast}$ in \eqref{eq:double_eigendecomposition}, we have the SVD for discrete curl operator
\begin{align}\label{eq:SVD}
C = P\textup{diag}(\Lambda_{q}^{\tfrac{1}{2}}, \Lambda_{q}^{\tfrac{1}{2}}, 0)Q^{\ast} = P_{r}\Sigma_{r}Q_{r}^{\ast},
\end{align}
where
\begin{align*}
P_{r} = [P_{2}, P_{1}], \; Q_{r} = [Q_{1}, Q_{2}], \; \Sigma_{r} = \textup{diag}(\Lambda_{q}^{\tfrac{1}{2}}, \Lambda_{q}^{\tfrac{1}{2}}).
\end{align*}
\end{theorem}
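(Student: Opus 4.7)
\bigskip

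\noindent\textbf{Proof proposal.} The plan is to verify the SVD directly from the explicit column formulas for $Q=[Q_1,Q_2,Q_0]$ and $P=[P_2,P_1,P_0]$, using the two eigen-decompositions in \eqref{eq:double_eigendecomposition} as the starting point. Since $Q$ and $P$ are already shown to be unitary, it suffices to establish the column-wise identities
\begin{align*}
C Q_0 = 0, \qquad C Q_1 = P_2 \Lambda_q^{1/2}, \qquad C Q_2 = P_1 \Lambda_q^{1/2},
\end{align*}
because these exactly assemble into $C[Q_1,Q_2,Q_0] = [P_2,P_1,P_0]\,\mathrm{diag}(\Lambda_q^{1/2},\Lambda_q^{1/2},0)$, which right-multiplied by $Q^\ast$ is the claimed factorization.

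First I would dispose of the null-space column: from $C^\ast C Q_0 = 0$ one gets $\|CQ_0\|_F^2 = \mathrm{tr}(Q_0^\ast C^\ast C Q_0) = 0$, hence $CQ_0 = 0$. Next, for $CQ_1$, I would use the definition $Q_1 = (I - Q_0 Q_0^\ast)T_1 (\Lambda_p^\ast \Lambda_p \Lambda_q^{-1})^{-1/2}$ together with $C Q_0 = 0$ to collapse the projector, giving $C Q_1 = C T_1 (\Lambda_p^\ast\Lambda_p\Lambda_q^{-1})^{-1/2}$. Since all the $\Lambda$-matrices are diagonal and hence commute, one may pull out a factor $\Lambda_q^{1/2}$ on the right:
\begin{align*}
C Q_1 \;=\; \bigl[C T_1 (\Lambda_p^\ast \Lambda_p)^{-1/2}\bigr]\Lambda_q^{1/2} \;=\; P_2\, \Lambda_q^{1/2},
\end{align*}
by the defining formula of $P_2$ in \eqref{eq:range_basis2}.

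For $CQ_2$ I would use $Q_2 = C^\ast T_1 (\Lambda_p^\ast\Lambda_p)^{-1/2}$ so that $CQ_2 = CC^\ast T_1(\Lambda_p^\ast\Lambda_p)^{-1/2}$, then insert $CC^\ast = CC^\ast(I-P_0P_0^\ast)$ (valid because $P_0$ spans the null space of $CC^\ast$) and recognize $(I - P_0 P_0^\ast)T_1 = P_1 (\Lambda_p^\ast\Lambda_p\Lambda_q^{-1})^{1/2}$ from \eqref{eq:range_basis1}. Applying the eigen-relation $CC^\ast P_1 = P_1\Lambda_q$ and again commuting the diagonal factors yields $C Q_2 = P_1 \Lambda_q^{1/2} (\Lambda_p^\ast\Lambda_p)^{1/2}(\Lambda_p^\ast\Lambda_p)^{-1/2} = P_1 \Lambda_q^{1/2}$, as required.

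The only real obstacle is bookkeeping: one must track the square roots of the several diagonal factors $\Lambda_q$, $\Lambda_p^\ast\Lambda_p$, and $\Lambda_p^\ast\Lambda_p\Lambda_q^{-1}$, and verify that the normalizing factors chosen when defining $Q_1,Q_2,P_1,P_2$ were designed precisely so that these telescope. The non-singularity of $\Lambda_q$ and $\Lambda_p$ for nonzero Bloch vectors $2\pi\mathbf{k}$ inside the irreducible Brillouin zone, already noted before \eqref{eq:null_basis}, guarantees that all square roots and inverses in the manipulations are well defined. Once the three column identities are in hand, the statement $C = P\,\mathrm{diag}(\Lambda_q^{1/2},\Lambda_q^{1/2},0)\,Q^\ast = P_r \Sigma_r Q_r^\ast$ is immediate, and this is an SVD because the diagonal entries of $\Lambda_q^{1/2}$ are nonnegative and $P_r$, $Q_r$ inherit orthonormality from $P$ and $Q$.
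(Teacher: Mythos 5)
Your verification is correct. Note that the paper itself gives no proof of this theorem --- it is imported verbatim from \cite{chhl2015} --- so there is no internal argument to compare against; but your column-by-column check ($CQ_0=0$ from $Q_0^\ast C^\ast C Q_0=0$, $CQ_1=P_2\Lambda_q^{1/2}$ by collapsing the projector and splitting $(\Lambda_p^\ast\Lambda_p\Lambda_q^{-1})^{-1/2}=(\Lambda_p^\ast\Lambda_p)^{-1/2}\Lambda_q^{1/2}$, and $CQ_2=P_1\Lambda_q^{1/2}$ via $CC^\ast=CC^\ast(I-P_0P_0^\ast)$ and the eigen-relation $CC^\ast P_1=P_1\Lambda_q$) is exactly the standard way to assemble an SVD from the ingredients the paper has already established, and every step is licensed by facts stated before the theorem: $Q$ and $P$ unitary, the diagonal factors commuting, and $\Lambda_q$, $\Lambda_p$ nonsingular for $2\pi\mathbf{k}\neq\mathbf{0}$ in the irreducible Brillouin zone. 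The one point worth making explicit at the end, which you do note, is that $\Lambda_q=\Lambda_1^\ast\Lambda_1+\Lambda_2^\ast\Lambda_2+\Lambda_3^\ast\Lambda_3$ has nonnegative real diagonal entries, so $\Lambda_q^{1/2}$ is a legitimate matrix of singular values.
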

If $2\pi{\bf k} = {\bf 0}$, please find more details by refer to \cite{chhl2015}.

\subsection{Null space free method}

We are usually interested in several smallest positive eigenvalues, however, the dimension of null space of $C^{\ast}C$ is $n$, it seriously affects the convergence of iteration. We introduce a stunning technique which is called null space free method to avoid this problem. 

\begin{theorem}\label{thm:invariant_space}
Let $Q_{r}$ and $\Sigma_{r}$ be defined in \eqref{eq:SVD}. Then 
\begin{align*}
\textup{span}\{B^{-1}Q_{r}\Sigma_{r}\}
\end{align*}
forms an invariant subspace corresponding to the nonzero eigenvalues of \eqref{eq:GEP}.
\end{theorem}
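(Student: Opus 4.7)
The plan is to exploit the SVD in \eqref{eq:SVD} so that the nullspace of $C^{\ast}C$ is eliminated algebraically, leaving a $2n \times 2n$ compression whose eigenvalues are exactly the nonzero eigenvalues of the GEP. Since $Q = [Q_{r}, Q_{0}]$ is unitary and $C^{\ast}C = Q\,\mbox{diag}(\Lambda_{q}, \Lambda_{q}, 0)\,Q^{\ast}$, the zero block decouples and yields the compact factorisation
\[
A = C^{\ast}C = Q_{r}\Sigma_{r}^{2}Q_{r}^{\ast}.
\]
This identity is the only fact from the previous subsection I really need; everything else is linear algebra around it.

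Given this factorisation, invariance of $\mbox{span}\{B^{-1}Q_{r}\Sigma_{r}\}$ under $B^{-1}A$ is a one-line computation:
\[
B^{-1}A\bigl(B^{-1}Q_{r}\Sigma_{r}\bigr) = B^{-1}Q_{r}\Sigma_{r}^{2}Q_{r}^{\ast}B^{-1}Q_{r}\Sigma_{r} = \bigl(B^{-1}Q_{r}\Sigma_{r}\bigr)M,
\]
where $M := \Sigma_{r}Q_{r}^{\ast}B^{-1}Q_{r}\Sigma_{r} \in \mathbb{C}^{2n \times 2n}$ is the compression that will govern the reduced problem. This already shows that $\mbox{span}\{B^{-1}Q_{r}\Sigma_{r}\}$ is an invariant subspace and that the eigenvalues of $B^{-1}A$ on it coincide with those of $M$.

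To confirm that the subspace captures \emph{every} nonzero eigenpair (rather than a proper subset), I would observe that $B$ is diagonal with strictly positive entries $\varepsilon_{\ell}(\mathbf{x})>0$, hence invertible; $Q_{r}$ has orthonormal columns and $\Sigma_{r}$ is invertible, so $B^{-1}Q_{r}\Sigma_{r}$ has full column rank $2n$. Conversely, for any solution $(\lambda, \mathbf{x})$ of \eqref{eq:GEP} with $\lambda \neq 0$, rearranging and inserting the factorisation gives
\[
\mathbf{x} = \lambda^{-1}B^{-1}A\mathbf{x} = \lambda^{-1}B^{-1}Q_{r}\Sigma_{r}^{2}Q_{r}^{\ast}\mathbf{x} = B^{-1}Q_{r}\Sigma_{r}\mathbf{y},\qquad \mathbf{y} := \lambda^{-1}\Sigma_{r}Q_{r}^{\ast}\mathbf{x},
\]
so $\mathbf{x}$ lies in the invariant subspace. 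Together these two directions establish the claim, and moreover identify the reduced standard eigenproblem as $M\mathbf{y} = \lambda\mathbf{y}$.

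The step that demands care is not algebraic but relies on the hypotheses implicit in the construction of $Q_{r}$ and $\Sigma_{r}$: the factorisation $A = Q_{r}\Sigma_{r}^{2}Q_{r}^{\ast}$ presumes that $\Lambda_{q}$ is positive definite, which the paper has already guaranteed when $2\pi\mathbf{k}\neq\mathbf{0}$ lies in the irreducible Brillouin zone. The degenerate case $2\pi\mathbf{k}=\mathbf{0}$, for which $\Lambda_{q}$ becomes only positive semidefinite and additional null directions appear, would have to be handled by the separate SVD construction alluded to at the end of the previous subsection; all other ingredients of the proof are unchanged.
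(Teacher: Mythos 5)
Your proposal is correct and follows essentially the same route as the paper: both rest on the factorisation $A = Q_{r}\Sigma_{r}^{2}Q_{r}^{\ast}$ and the one-line identity $A(B^{-1}Q_{r}\Sigma_{r}) = B(B^{-1}Q_{r}\Sigma_{r})\,\Sigma_{r}Q_{r}^{\ast}B^{-1}Q_{r}\Sigma_{r}$. Your converse step, showing every eigenvector with $\lambda\neq 0$ actually lies in $\mathrm{span}\{B^{-1}Q_{r}\Sigma_{r}\}$, is a detail the paper leaves implicit behind the invertibility of $\Sigma_{r}Q_{r}^{\ast}B^{-1}Q_{r}\Sigma_{r}$, and is a worthwhile addition.
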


\begin{proof}
Base on the SVD \eqref{eq:SVD} for discrete curl operator, we can rewrite $A$ as $Q_{r}\Sigma_{r}^{2}Q_{r}^{\ast}$, then
\begin{align*}
A(B^{-1}Q_{r}\Sigma_{r}) &= Q_{r}\Sigma_{r}^{2}Q_{r}^{\ast}B^{-1}Q_{r}\Sigma_{r}\\
&= B(B^{-1}Q_{r}\Sigma_{r})(\Sigma_{r}Q_{r}^{\ast}B^{-1}Q_{r}\Sigma_{r}).
\end{align*}
Since $\Sigma_{r}Q_{r}^{\ast}B^{-1}Q_{r}\Sigma_{r}$ is invertable, we obtain that $B^{-1}Q_{r}\Sigma_{r}$ is a basis of the invariant space corresponding to the nonzero eigenvalues of \eqref{eq:GEP}.
\end{proof}

Let ${\bf x} = B^{-1}Q_{r}\Sigma_{r}{\bf y}$, then Theorem \ref{thm:invariant_space} implies that
\begin{align*}
\left\{\lambda \Big{|}\; \Sigma_{r}Q_{r}^{\ast}B^{-1}Q_{r}\Sigma_{r}{\bf y} = \lambda{\bf y}\right\} = \left\{\lambda\big{|}\; A{\bf x} = \lambda B{\bf x}, \; \lambda \neq 0\right\}.
\end{align*}
This means that we can reduce the GEP \eqref{eq:GEP} to the null space free standard eigenvale problem (NFSEP)
\begin{align}\label{eq:NFSEP}
A_{r}{\bf y} \equiv \Sigma_{r}Q_{r}^{\ast}B^{-1}Q_{r}\Sigma_{r}{\bf y} = \lambda{\bf y}.
\end{align}
All the eigenvalues of $A_{r}$ are equal to the nonzero eigenvalues of the GEP \eqref{eq:GEP}, note that the dimension of $A_{r}$ is $2n$, which is far smaller than the original problem. To find the several smallest eigenvalues of $A_{r}$, we can apply the inverse Lanczos method to solve the NFSEP \eqref{eq:NFSEP}. In each iterate step of inverse Lanczos method, we have to solve the linear system
\begin{align*}
Q_{r}^{\ast}B^{-1}Q_{r}{\bf b} = {\bf c},
\end{align*}
where ${\bf u}, {\bf c} \in \mathbb{C}^{2n}$ are two arbitrary vectors. It is easy to check that  
\begin{align*}
\kappa(Q_{r}^{\ast}B^{-1}Q_{r}) \leq \kappa(B^{-1}).
\end{align*}
Therefore we can use CG method to solve this Hermition positive definite system without any preconditioner.

In fact, although the dimension of $A_{r}$ is smaller than $A$, but $A_{r}$ is a dense matrix. Fortunately, $Q_{r}$ is consisted of $I_{3}\otimes T$ and some diagonal matrices, thus we can use FFT for reducing computation load and accerlerate the computation time. In other word, the matrix-vector multiplications $T^{\ast}{\bf p}$ and $T{\bf q}$ for any vectors ${\bf p}, {\bf q} \in \mathbb{C}^{n}$ can be computed using the FFT in $O(n\log n)$ time.

\subsection{FFT based matrix-vector multiplication}

In \cite{hhlw2013eig}, Huang et al developed the FFT based scheme to compute the matrix-vector multiplications $T^{\ast}{\bf p}$ and $T{\bf q}$ for any vectors ${\bf p}$ and ${\bf q}$. Here, we provide the sketch and the Algorithms for computing $T^{\ast}{\bf p}$ and $T{\bf q}$, respectively. An important Kronecker product formula will be used:
\begin{align}\label{eq:tensor1}
(C^{\top}\otimes A)\mbox{vec}(B) = \mbox{vec}(ABC).
\end{align}

We denote a vector ${\bf p} \in \mathbb{C}^{n}$ by ${\bf p} = [{\bf p}_{1}^{\top} \; \cdots \; {\bf p}_{2}^{\top}]^{\top}$, where ${\bf p}_{k} = [{\bf p}_{1, k}^{\top} \; \cdots \; {\bf p}_{n_{2}, k}^{\top}] \in \mathbb{C}^{n_{1}n_{2}}$ and ${\bf p}_{j, k} = [p_{1, j, k} \; \cdots \; p_{n_{1}, j, k}]^{\top} \in \mathbb{C}^{n_{1}}$ for $j = 1, \cdots, n_{2}$ and $k = 1, \cdots, n_{3}$. We denote another vector ${\bf q} \in \mathbb{C}^{n}$ by ${\bf q} = [{\bf q}_{1}^{\top} \; \cdots \; {\bf q}_{n_{1}}^{\top}]^{\top}$, where ${\bf q}_{i} = [{\bf q}_{i, 1}^{\top} \; \cdots \; {\bf q}_{i, n_{2}}^{\top}]^{\top} \in \mathbb{C}^{n_{2}n_{3}}$ and ${\bf q}_{i, j} = [q_{i, j, 1} \; \cdots \; q_{i, j, n_{3}}]^{\top} \in \mathbb{C}^{n_{3}}$ for $i = 1, \cdots, n_{1}$ and $j = 1, \cdots, n_{2}$.

By the definition of $T$ in \eqref{eq:T}, we have
\begin{small}
\begin{align*}
T^{\ast}{\bf p} = \dfrac{1}{\sqrt{n}}\begin{bmatrix}
T_{1}^{\ast}{\bf p}\\
T_{2}^{\ast}{\bf p}\\
\vdots\\
T_{n_{1}}^{\ast}{\bf p}
\end{bmatrix}, \hspace{0.5em}T_{i}^{\ast}{\bf p} = \begin{bmatrix}
T_{i, 1}^{\ast}{\bf p}\\
T_{i, 2}^{\ast}{\bf p}\\
\vdots\\
T_{i, n_{2}}^{\ast}{\bf p}
\end{bmatrix}, \hspace{0.5em}T_{i, j}^{\ast}{\bf p} = \begin{bmatrix}
({\bf z}_{i, j, 1}\otimes {\bf y}_{i, j}\otimes{\bf x}_{i})^{\ast}{\bf p}\\
({\bf z}_{i, j, 2}\otimes {\bf y}_{i, j}\otimes{\bf x}_{i})^{\ast}{\bf p}\\
\vdots\\
({\bf z}_{i, j, n_{3}}\otimes {\bf y}_{i, j}\otimes{\bf x}_{i})^{\ast}{\bf p}
\end{bmatrix},
\end{align*}
\end{small}
for $i = 1, \cdots, n_{1}$ and $j = 1, \cdots, n_{2}$. Let $P = [{\bf p}_{1} \; \cdots \; {\bf p}_{n_{3}}]$ and $P_{k} = [{\bf p}_{1, k} \; \cdots \; {\bf p}_{n_{2}, k}]$. Using the equation \eqref{eq:tensor1}, we get
\begin{align*}
T_{i, j}^{\ast}{\bf p} = [{\bf z}_{i, j, 1} \; \cdots \; {\bf z}_{i, j, n_{3}}]^{\ast}P^{\top}(\overline{{\bf y}_{i, j}\otimes{\bf x}_{i}}) = U_{n_{3}}^{\ast}D_{\hat{\bf a}_{3}, i, j}^{\ast}P^{\top}(\overline{{\bf y}_{i, j}\otimes{\bf x}_{i}}).
\end{align*}
It is worth to note that $U_{n_{3}}$ is the inverse of discrete Fourier matrix, $D_{\hat{\bf a}_{3}, i, j}$ is a diagonal matrix, so we just need to deal with the matrix-vector product $P^{\top}(\overline{{\bf y}_{i, j}\otimes{\bf x}_{i}})$. By the similar argument, we get
\begin{align*}
{\bf p}_{k}^{\top}(\overline{{\bf y}_{i, j}\otimes{\bf x}_{i}}) = {\bf y}_{i, j}^{\ast}P_{k}^{\top}\overline{\bf x}_{i} \hspace{1em} \mbox{for}\hspace{1em}k = 1, \cdots, n_{3}.
\end{align*}
Given $i, k$ and let
\begin{small}
\begin{align*}
[\xi_{1, k} \; \cdots \; \xi_{n_{1}, k}] = P_{k}^{\ast}[{\bf x}_{1} \; \cdots \; {\bf x}_{n_{1}}] = P_{k}^{\ast}D_{{\bf a}_{1}}U_{n_{1}}
\end{align*}
\end{small}
and
\begin{small}
\begin{align*}
[\eta_{i, 1} \; \cdots \; \eta_{i, n_{2}}] &= \left([{\bf y}_{i, 1} \; \cdots \; {\bf y}_{i, n_{2}}]^{\ast}[P_{1}^{\top}\overline{\bf x}_{i} \; \cdots \; P_{n_{3}}^{\top}\overline{\bf x}_{i}]\right)^{\ast}\\
&= [\xi_{i, 1} \; \cdots \; \xi_{i, n_{3}}]^{\top}D_{\hat{\bf a}_{2}, i}U_{n_{2}},
\end{align*}
\end{small}
then $P^{\top}(\overline{{\bf y}_{i, j}\otimes{\bf x}_{i}}) = \overline{\eta_{i, j}}$. It means that $P^{\top}(\overline{{\bf y}_{i, j}\otimes{\bf x}_{i}})$ is also comprised of the discrete Fourier matrices and diagonal matrices. Let 
\begin{align*}
Z_{i} = U_{n_{3}}^{\ast}[E_{\hat{\bf a}_{3}, i, 1}^{\ast}\overline{\eta_{i, 1}} \; \cdots \; E_{\hat{\bf a}_{3}, i, n_{2}}^{\ast}\overline{\eta_{i, n_{2}}}],
\end{align*}
then we have
\begin{align*}
T^{\ast}{\bf p} = \dfrac{1}{\sqrt{n}}\mbox{vec}\begin{pmatrix}
Z_{1} & \cdots & Z_{n_{1}}
\end{pmatrix}.
\end{align*}

Algorithm \ref{alg:Tp} give the processes for computing $T^{\ast}{\bf p}$.

\begin{algorithm}[H]
\caption{FFT-based matrix-vector multiplication for $T^{\ast}{\bf p}$ \cite{hhlw2013eig}}
\label{alg:Tp}
\begin{algorithmic}[1]
\Require Any vector ${\bf p} \in \mathbb{C}^{n}$.
\Ensure The vector $T^{\ast}{\bf p}$.
\For{$k = 1, \cdots, n_{3}$}
\State Compute $\xi_{i, k}$ with $[\xi_{1, k} \; \cdots \; \xi_{n_{1}, k}]=(P_{k}^{\ast}D_{{\bf a}_{1}})U_{n_{1}}$.
\EndFor
\For{$i = 1, \cdots, n_{1}$}
\State Compute $\eta_{i, j}$ with $[\eta_{i, 1} \; \cdots \; \eta_{i, n_{2}}]=([\xi_{i, 1} \, \cdots \; \xi_{i, n_{3}}]^{\top}D_{{\bf a}_{2}, i})U_{n_{2}}$.
\State Compute $Z_{i}$ with $Z_{i} = U_{n_{3}}^{\ast}[D_{\hat{\bf a}_{3}, i, 1}^{\ast}\overline{\eta_{i, 1}} \; \cdots \; D_{\hat{\bf a}_{3}, i, n_{2}}^{\ast}\overline{\eta_{i, n_{2}}}]$.
\State Set $(T^{\ast}{\bf p})((i-1)n_{2}n_{3}+1:in_{2}n_{3}) = \tfrac{1}{n}\mbox{vec}(Z_{i})$.
\EndFor
\end{algorithmic}
\end{algorithm}

On the other hand, by the equation \eqref{eq:tensor1}, we have
\begin{align*}
T{\bf q} &= \dfrac{1}{\sqrt{n}}\sum_{i = 1}^{n_{1}}T_{i}{\bf q}_{i} = \dfrac{1}{\sqrt{n}}\sum_{i = 1}^{n_{1}}\sum_{j =1}^{n_{2}}T_{i, j}{\bf q}_{i, j}\\
&= \dfrac{1}{\sqrt{n}}\sum_{i = 1}^{n_{1}}\sum_{j = 1}^{n_{2}}\sum_{k = 1}^{n_{3}}q_{i, j, k}({\bf z}_{i, j, k}\otimes{\bf y}_{i, j}\otimes{\bf x}_{i})\\
&= \dfrac{1}{\sqrt{n}}\sum_{i = 1}^{n_{1}}\sum_{j = 1}^{n_{2}}([{\bf z}_{i, j, 1} \; \cdots \; {\bf z}_{i, j, n_{3}}]{\bf q}_{i, j})\otimes{\bf y}_{i, j}\otimes{\bf x}_{i}\\
&= \dfrac{1}{\sqrt{n}}\sum_{i = 1}^{n_{1}}\mbox{vec}\left(\sum_{j = 1}^{n_{2}}({\bf y}_{i, j}\otimes{\bf x}_{i})(D_{\hat{\bf a}_{3}, i, j}U_{n_{3}}{\bf q}_{i, j})^{\top}\right)\\
&= \dfrac{1}{\sqrt{n}}\sum_{i = 1}^{n_{1}}\mbox{vec}\left([{\bf y}_{i, 1}\otimes{\bf x}_{i} \; \cdots \; {\bf y}_{i, n_{2}}\otimes{\bf x}_{i}]\begin{bmatrix}\begin{smallmatrix}
(D_{\hat{\bf a}_{3}, i, 1}U_{n_{3}}{\bf q}_{i, 1})^{\top}\\
\vdots\\
(D_{\hat{\bf a}_{3}, i, n_{2}}U_{n_{3}}{\bf q}_{i, n_{2}})^{\top}
\end{smallmatrix}\end{bmatrix}\right)\\
&\equiv \dfrac{1}{\sqrt{n}}\sum_{i = 1}^{n_{1}}\mbox{vec}\left(\left([{\bf y}_{i, 1} \; \cdots \; {\bf y}_{i, n_{2}}]Q_{\hat{\bf a}_{3}, i}\right)\otimes{\bf x}_{i}\right)\\
&= \dfrac{1}{\sqrt{n}}\sum_{i = 1}^{n_{1}}\mbox{vec}((D_{\hat{\bf a}_{2}, i}U_{n_{2}}Q_{\hat{\bf a}_{3}, i})\otimes{\bf x}_{i}),
\end{align*}
where $Q_{\hat{\bf a}_{3}, i} = [ D_{\hat{\bf a}_{3}, i, 1}U_{n_{3}}{\bf q}_{i, 1} \; \cdots \; D_{\hat{\bf a}_{3}, i, n_{2}}U_{n_{3}}{\bf q}_{i, n_{2}} ]^{\top}$, $i = 1, \cdots, n_{1}$.\\
Let $[{\bf g}_{i, 1} \; \cdots \; {\bf g}_{i, n_{2}}] = D_{\hat{\bf a}_{2}, i}U_{n_{2}}Q_{\hat{\bf a}_{3}, i}$, then we can rewrite the above equation as
\begin{align*}
T{\bf q} &= \dfrac{1}{\sqrt{n}}\sum_{i = 1}^{n_{1}}\mbox{vec}\left([{\bf g}_{i, 1}\otimes{\bf x}_{i} \; \cdots \; {\bf g}_{i, n_{3}}\otimes{\bf x}_{i}]\right)\\
&= \dfrac{1}{\sqrt{n}}\mbox{vec}\left(\begin{bmatrix}
\mbox{vec}\left(\sum_{i = 1}^{n_{1}}{\bf x}_{i}{\bf g}_{i, 1}^{\top}\right) & \cdots & \mbox{vec}\left(\sum_{i = 1}^{n_{1}}{\bf x}_{i}{\bf g}_{i, n_{3}}^{\top}\right)
\end{bmatrix}\right).
\end{align*}
Finally, we can rewrite each summantion as
\begin{align*}
\sum_{i = 1}^{n_{1}}{\bf x}_{i}{\bf g}_{i, k}^{\top} = [{\bf x}_{1} \; \cdots \; {\bf x}_{n_{1}}][{\bf g}_{1, k} \; \cdots \; {\bf g}_{n_{1}, k}]^{\top}= D_{{\bf a}_{1}}U_{n_{1}}[{\bf g}_{1, k} \; \cdots \; {\bf g}_{n_{1}, k}]^{\top}.
\end{align*}

Algorithm \ref{alg:Tq} give the processes for computing $T{\bf q}$.

\begin{algorithm}[H]
\caption{FFT-based matrix-vector multiplication for $T{\bf q}$ \cite{hhlw2013eig}}
\label{alg:Tq}
\begin{algorithmic}[1]
\Require Any vector ${\bf q} \in \mathbb{C}^{n}$.
\Ensure The vector $T{\bf q}$.
\For{$i = 1, \cdots, n_{1}$}
\State Compute $Q_{\hat{\bf a}_{3}, i}$ with {$Q = U_{n_{3}}[{\bf q}_{i, 1} \; \cdots \; {\bf q}_{i, n_{2}}]$,} 

{$Q_{\hat{\bf a}_{3}, i} = [D_{\hat{\bf a}_{3}, i, 1}Q(:, 1) \; \cdots \; D_{\hat{\bf a}_{3}, i, n_{2}}Q(:, n_{2})]^{\top}$}.
\State Compute ${\bf g}_{i, k}$ with $[{\bf g}_{i, 1} \; \cdots \; {\bf g}_{i, n_{3}}] = D_{\hat{\bf a}_{2}, i}(U_{n_{2}}Q_{\hat{\bf a}_{3}, i})$.
\EndFor
\For{$k = 1, \cdots, n_{3}$}
\State Compute $Q = D_{{\bf a}_{1}}U_{n_{1}}[{\bf g}_{1, k} \; \cdots \; {\bf g}_{n_{1}, k}]^{\top}$.
\State Set $(T{\bf q})((k-1)n_{1}n_{2}+1:kn_{1}n_{2}) = \tfrac{1}{n}\mbox{vec}(Q)$.
\EndFor
\end{algorithmic}
\end{algorithm}


\section{Numerical Experiments}\label{section6}

In this section, we will show some results of numerical experiments by appling the method we developed before. We design several photonic crystals with different shapes for each Bravais lattices and compute their band structures. The shape of each material satisfies the symmetry in one of 230 space groups, most of these shapes consist of two patterns: sphere and cylinder. The radius of spheres $r$ and the radius of the base circle of the cylinders $s$ are set to $\tfrac{r}{a} = 0.15 = \tfrac{s}{a}$. We set the numbers of discrete grid $n_{1} = n_{2} = n_{3} = 120$ in every cases, and the dimensions each GEP are 5.184 million. The permittivity in material is set to $\varepsilon = 13$, and permittivity out side the material is equal to the vacuum permittivity $\varepsilon_{0} = 1$, the permeability is alway equal to $\mu_{0} = 1$. In all figures of band structure, the frequency $\omega = \tfrac{\sqrt{\lambda}}{2\pi}$ is shown on the vertical axis, the horizontal axis represent the Bloch wave vectors $2\pi {\bf k}$. 

We solve the eigenvalue problems associate $2\pi{\bf k}$ along the segments connecting any two corner points of the associated irreducible Brillouin zone, every positions of corner points can be found in Appendix, we will show the path we choose above the figures of band structure. To sovle the GEP, we use the inverse Lanczos method as the eigen-solver and CG method as the linear solver, the stopping criteria $tol_{outer}$ for eigen-solver and $tol_{inner}$ for linear solver are given by $10^{-12}$ and $10^{-13}$, respectively. In the inverse Lanczos iteration, we use {\bf{LAPACK}} to evaluate the Ritz pairs, and use {\bf{cuFFT}} to compute the matrix-vector product $T^{\ast}{\bf p}$ and $T{\bf q}$. We partition each segment into 10 regular part, and compute 10 smallest positive eigenvalues of the corresponding GEP for each wave vector. Each GEP can be solved in less than 100 seconds.

We complete all these numerical experiments of the workstation equipped with two Intel(R) Xeon(R) CPU E5-2650 v2 2.60GHz CPUs, NVIDIA Tesla K20c GPU and 5GB GDDR5 on the GPU, and the Red Hat 4.4.7-3 Linux operation system.

Before we show the band structures of materious in various lattices, let us consider the gyroid structure, which can be approximated by
\begin{align*}
\sin(2\pi \tfrac{x}{a})\cos(2\pi \tfrac{y}{a}) + \sin(2\pi \tfrac{y}{a})\cos(2\pi \tfrac{z}{a}) + \sin(2\pi \tfrac{z}{a})\cos(2\pi \tfrac{x}{a}) > 1.1.
\end{align*}
The gyroid structure is belong to the BCC lattice structure.

\begin{figure}[H]
\center
\includegraphics[height=6cm]{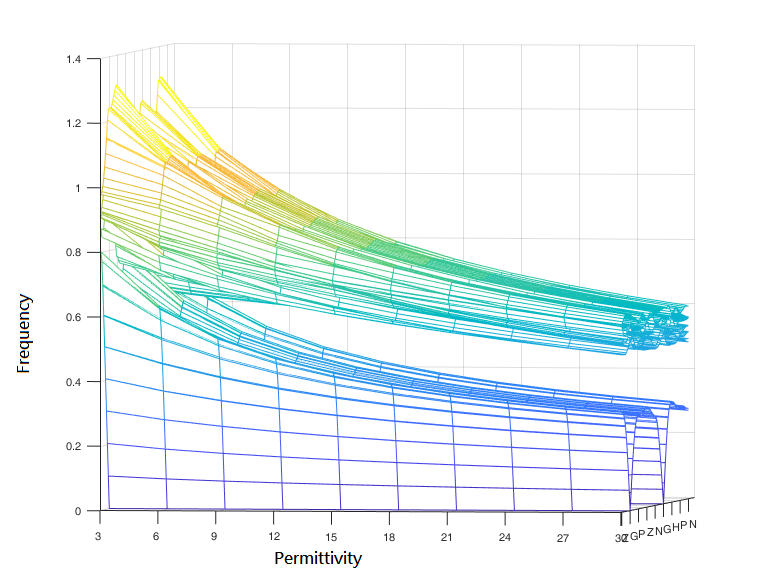}
\caption{The change of band structure when permittivity varies from 3 to 30.}
\label{fig:permittivity_change}
\end{figure}

In Figure \ref{fig:permittivity_change}, we show the change of band structure when permittivity varies from 3 to 30. The axis below represents the permittivities, and the vertical tangent plane for any permittivity is a band structure. As $\varepsilon = 3$, the material is like air, so there is no band gap. When $\varepsilon$ greater than 10, the shapes of band structures for every permittivities are very similar. We can see that the width of band gap is getting wider as the permittivity increases, and the frequency of the band gap is getting lower and lower. This phenomenon appears in all lattices, thus we compute the band structures for various lattices with the fix permittivity $\varepsilon$ if we just want to realize the shape of the band structure. In table \ref{tab:CPU_time}, we can observe that the average computational time for solving a GEP on GPU is far less than that on MATLAB. These results suggest that the performance improvement for computing the GEP is significant by using the GPU. 

\begin{table}
\begin{center}
\begin{tabular}{|c|c|c||c|c|c|}
\hline
&\multicolumn{2}{c||}{Time (seconds)} & & \multicolumn{2}{c|}{Time (seconds)}\\ \hline
numbers of grid & MATLAB & GPU & numbers of grid & MATLAB & GPU\\ \hline
30  & 61 & 4 &78  & 1014 & 26\\
36  & 105 & 5 & 84  & 1229 & 28\\
42  & 151 & 6 & 90 & 1605 & 32\\
48  & 231 & 8 & 96 & 1689 & 39\\
54  & 370 & 10 & 102 & 2389 & 48\\
60  & 481 & 13 & 108 & 2462 & 52\\
66  & 601 & 15 & 114 & 3431 & 94\\
72  & 778 & 19 & 120 & 3126 & 70 \\
\hline
\end{tabular}
\caption{Average time for solving a GEP on MATLAB and GPU with various $n_{1}$.}\label{tab:CPU_time}
\end{center}
\end{table}

Now we show the band structures for materials in various lattices as follows.

\begin{itemize}
\item {\bf{SC: }} The path of wave vector: $\Gamma \rightarrow X \rightarrow M \rightarrow \Gamma \rightarrow R \big{|} M \rightarrow R$.

\begin{figure}[H]
\centering
\subfigure[ ]{
\includegraphics[width=3.5cm]{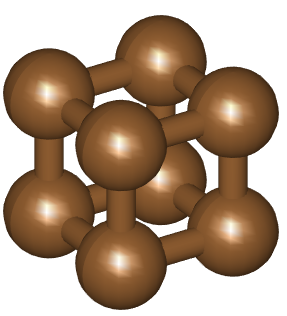}}\!\!
\subfigure[ ]{
\includegraphics[width=5cm]{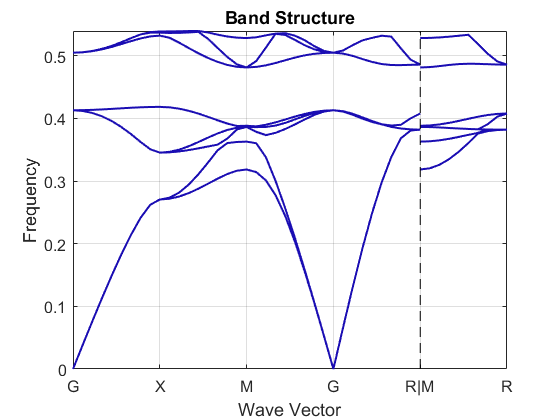}}
\caption{(a) The material satifies the space group No.221 Pm$\overline{3}$m. (b) The band structure for SC lattice.}
\end{figure}

\item {\bf{FCC: }} The path of wave vector: $X \rightarrow U \rightarrow L \rightarrow \Gamma \rightarrow X \rightarrow W \rightarrow K$.

\begin{figure}[H]
\centering
\subfigure[ ]{
\includegraphics[width=3.5cm]{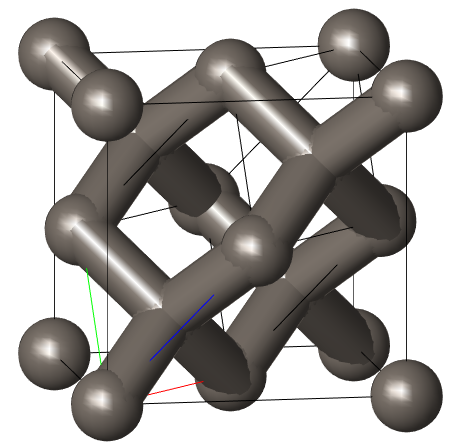}}\!\!
\subfigure[ ]{
\includegraphics[width=5cm]{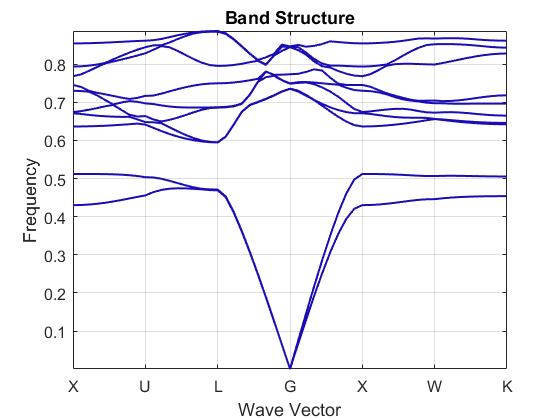}}
\caption{(a) The material satifies the space group No.227 Fd$\overline{3}$m. (b) The band structure for diamond structure in FCC lattice.}
\end{figure}

\item {\bf{BCC: }} We set the permittivity $\varepsilon = 16$ and the path of wave vector to $\Gamma \rightarrow M \rightarrow K \rightarrow \Gamma \rightarrow A \rightarrow L \rightarrow H \rightarrow A$.

\begin{figure}[H]
\centering
\subfigure[ ]{
\includegraphics[width=3.5cm]{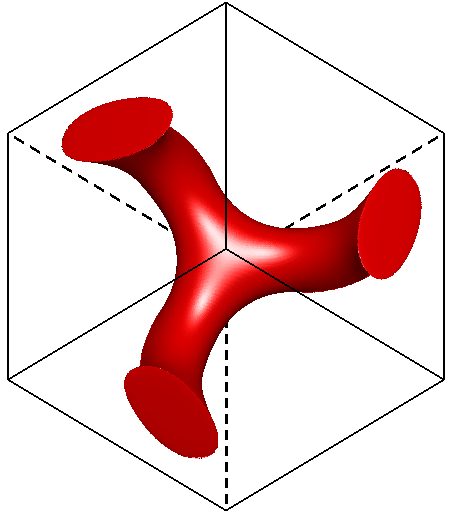}}\!\!
\subfigure[ ]{
\includegraphics[width=5cm]{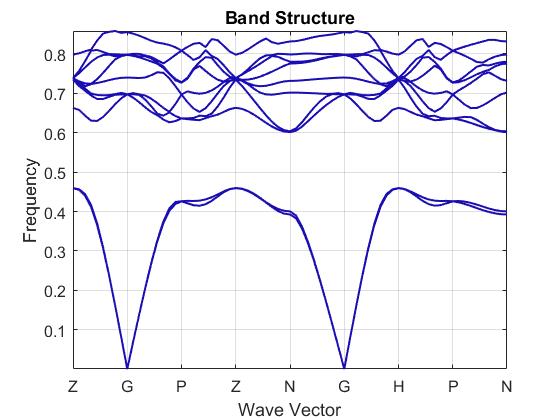}}
\caption{(a) The material satifies the space group No.214 I$4_{1}32$. (b) The band structure for single gyroid structure in BCC lattice.}
\end{figure}

\item {\bf{Hexagonal: }} The path of wave vector: $\Gamma \rightarrow M \rightarrow K \rightarrow \Gamma \rightarrow A \rightarrow L \rightarrow H \rightarrow A \big{|} L \rightarrow M \big{|} K \rightarrow H$.

\begin{figure}[H]
\centering
\subfigure[ ]{
\includegraphics[width=3.2cm]{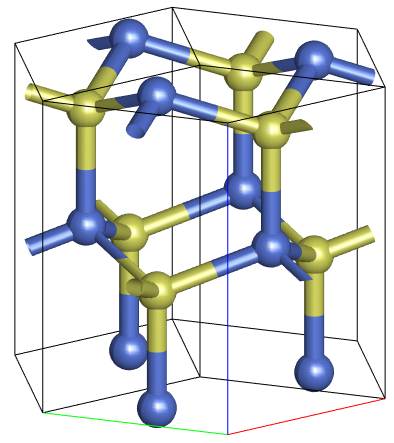}}\!\!
\subfigure[ ]{
\includegraphics[width=5cm]{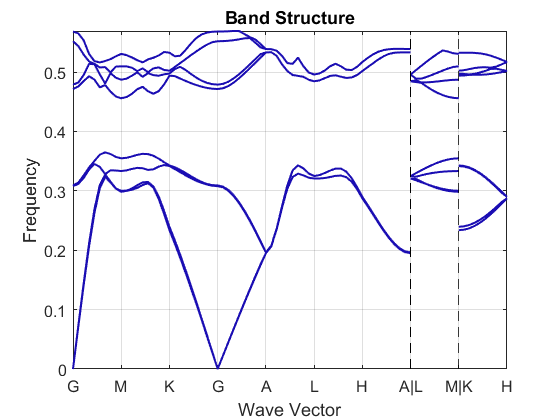}}
\caption{(a) The material satifies the space group No.186 P$6_3$mc. (b) The band structure for hexagonal lattice.}
\end{figure}

\item {\bf{Rhombohedral: }} The path of wave vector: $\Gamma \rightarrow L \rightarrow B_{1} \big{|} B \rightarrow Z \rightarrow \Gamma \rightarrow X \big{|} Q \rightarrow F \rightarrow P_{1} \rightarrow Z \big{|} L \rightarrow P$.

\begin{figure}[H]
\centering
\subfigure[ ]{
\includegraphics[width=3.2cm]{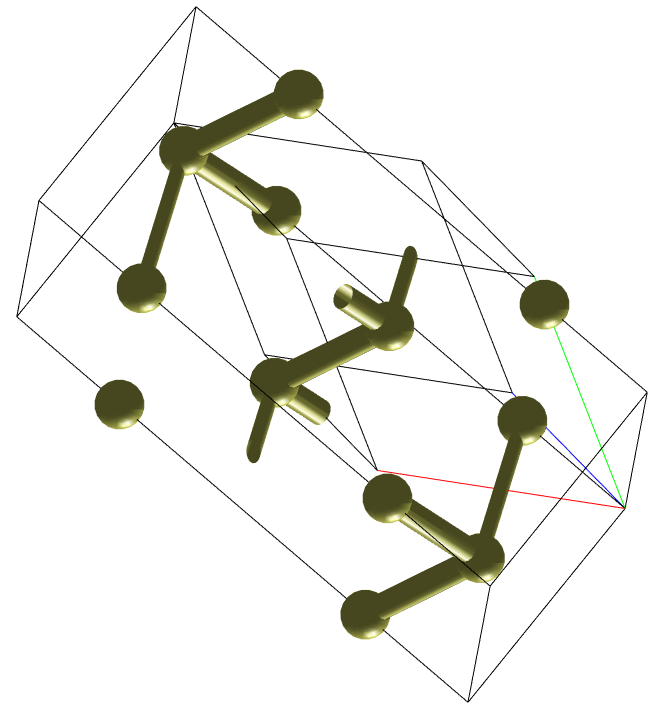}}\!\!
\subfigure[ ]{
\includegraphics[width=5cm]{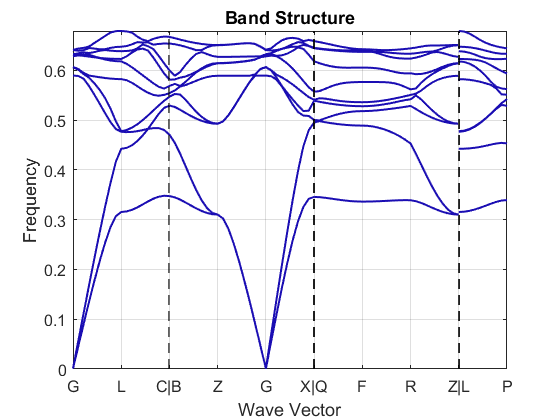}}
\caption{(a) The material satifies the space group No.166 R$\overline{3}$m. (b) The band structure for rhombohedral lattice.}
\end{figure}

\item {\bf{Primitive Tetragonal: }} The path of wave vector: $\Gamma \rightarrow X \rightarrow M \rightarrow \Gamma \rightarrow Z \rightarrow R \rightarrow A \rightarrow Z \big{|} X \rightarrow R \big{|} M \rightarrow A$.

\begin{figure}[H]
\centering
\subfigure[ ]{
\includegraphics[width=2cm]{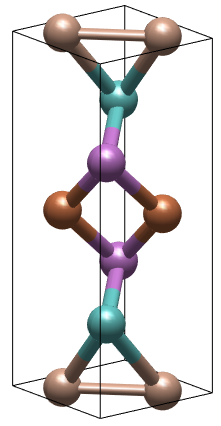}}\!\!
\subfigure[ ]{
\includegraphics[width=5cm]{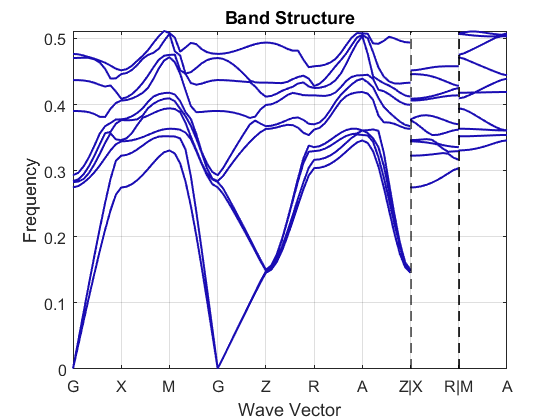}}
\caption{(a) The material satifies the space group No.129 P$\tfrac{4}{\mbox{n}}$mm. (b) The band structure for primitive tetragonal lattice.}
\end{figure}

\item {\bf{Body-Centered Tetragonal: }} The path of wave vector: $\Gamma \rightarrow X \rightarrow Y \rightarrow S \rightarrow \Gamma \rightarrow Z \rightarrow \Sigma_{1} \rightarrow N \rightarrow P \rightarrow Y_1 \rightarrow Z \big{|} X \rightarrow P$.

\begin{figure}[H]
\centering
\subfigure[ ]{
\includegraphics[width=4.5cm]{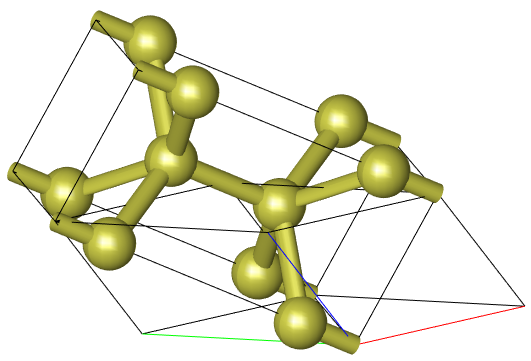}}\!\!
\subfigure[ ]{
\includegraphics[width=5cm]{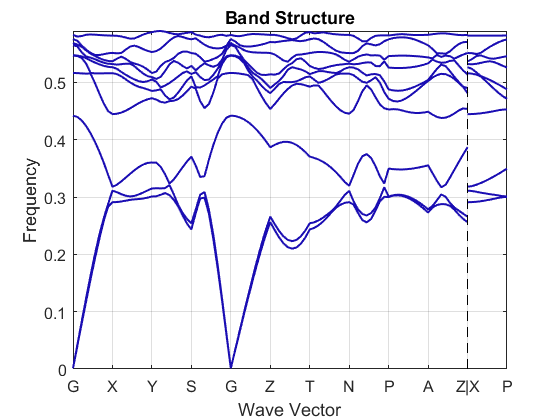}}
\caption{(a) The material satifies the space group No.139 I$\tfrac{4}{\mbox{m}}$mm. (b) The band structure for body-centered tetragonal lattice.}
\end{figure}

\item {\bf{Primitive Orthorhombic: }} The path of wave vector: $\Gamma \rightarrow X \rightarrow S \rightarrow Y \rightarrow \Gamma \rightarrow Z \rightarrow U \rightarrow R \rightarrow T \rightarrow Z \big{|} Y \rightarrow T \big{|} U \rightarrow X \big{|} S \rightarrow R$.

\begin{figure}[H]
\centering
\subfigure[ ]{
\includegraphics[width=4.2cm]{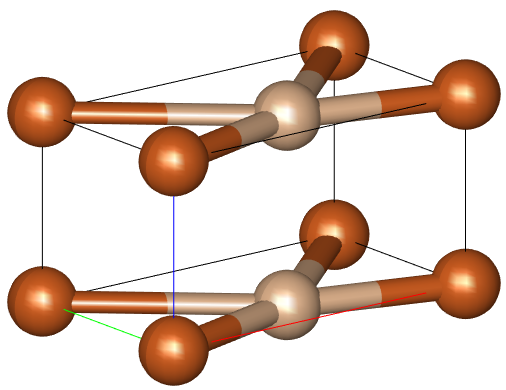}}\!\!
\subfigure[ ]{
\includegraphics[width=5cm]{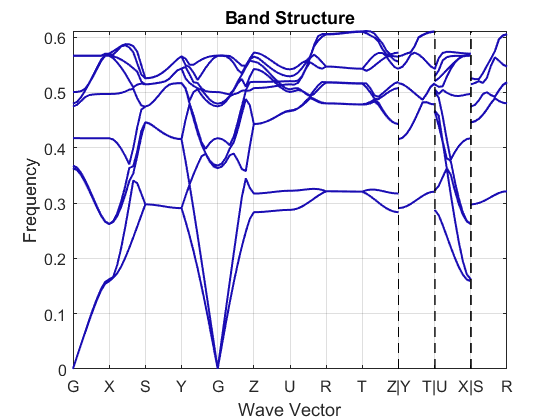}}
\caption{(a) The material satifies the space group No.25 Pmm2. (b) The band structure for primitive orthorhombic lattice.}
\end{figure}

\item {\bf{Base-Centered Orthorhombic: }}The path of wave vector: $\Gamma \rightarrow X \rightarrow S \rightarrow R \rightarrow A \rightarrow Z \rightarrow \Gamma \rightarrow Y \rightarrow X_{1} \rightarrow A_{1} \rightarrow T \rightarrow Y \big{|} Z \rightarrow T$.

\begin{figure}[H]
\centering
\subfigure[ ]{
\includegraphics[width=4.5cm]{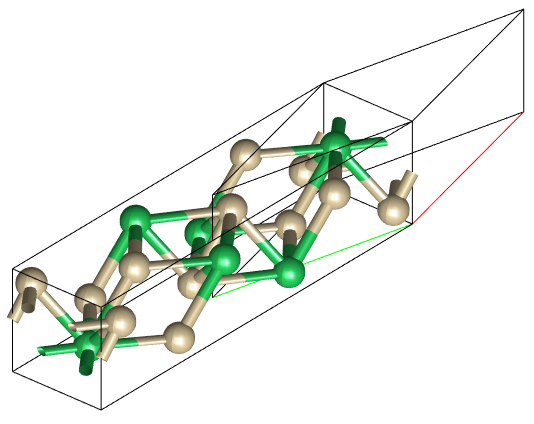}}\!\!
\subfigure[ ]{
\includegraphics[width=5cm]{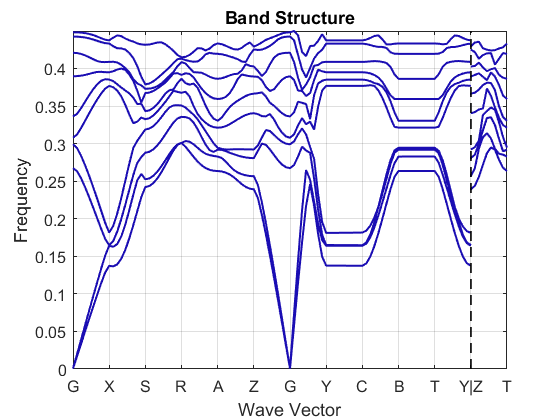}}
\caption{(a) The material satifies the space group No.63 Cmcm. (b) The band structure for base-centered orthorhombic lattice.}
\end{figure}

\item {\bf{Face-Centered Orthorhombic: }} The path of wave vector: $\Gamma \rightarrow Y \rightarrow T \rightarrow Z \rightarrow \Gamma \rightarrow X \rightarrow A_{1} \rightarrow Y \big{|} T \rightarrow X_{1} \big{|} X \rightarrow A \rightarrow Z \big{|} L \rightarrow \Gamma$.

\begin{figure}[H]
\centering
\subfigure[ ]{
\includegraphics[width=4cm]{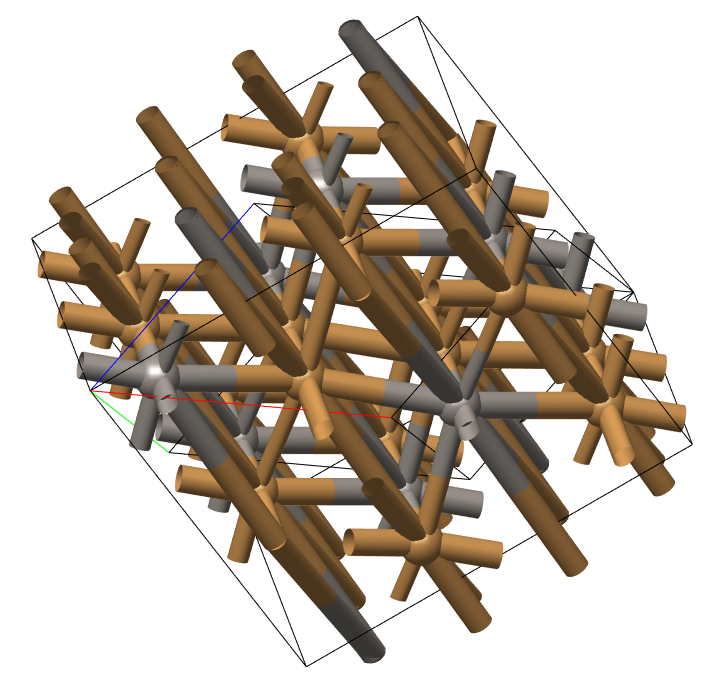}}\!\!
\subfigure[ ]{
\includegraphics[width=5cm]{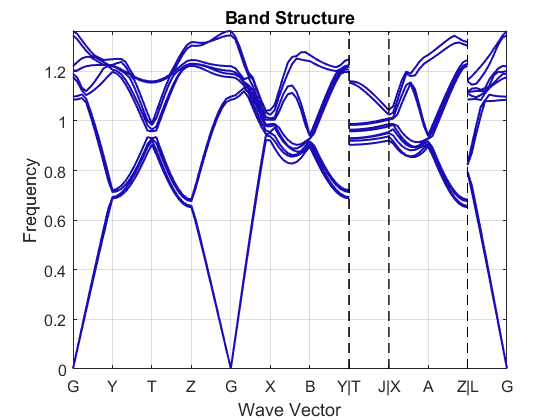}}
\caption{(a) The material satifies the space group No.70 Fddd. (b) The band structure for face-centered orthorhombic lattice.}
\end{figure}

\item {\bf{Body-Centered Orthorhombic: }} The path of wave vector: $\Gamma \rightarrow X \rightarrow L \rightarrow T \rightarrow W \rightarrow R \rightarrow X_{1} \rightarrow Z \rightarrow \Gamma \rightarrow Y \rightarrow S \rightarrow W \big{|} L_{1} \rightarrow Y \big{|} Y_{1} \rightarrow Z$.

\begin{figure}[H]
\centering
\subfigure[ ]{
\includegraphics[width=3cm]{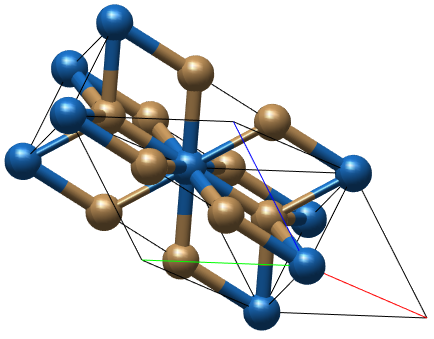}}\!\!
\subfigure[ ]{
\includegraphics[width=5cm]{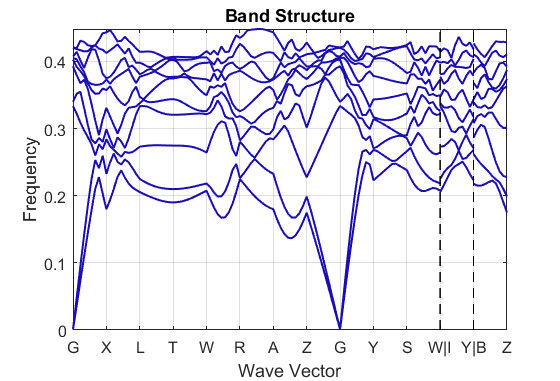}}
\caption{(a) The material satifies the space group No.71 Immm. (b) The band structure for body-centered orthorhombic lattice.}
\end{figure}

\item {\bf{Primitive Monoclinic: }} The path of wave vector: $\Gamma \rightarrow Y \rightarrow H \rightarrow C \rightarrow E \rightarrow M_{1} \rightarrow A \rightarrow X \rightarrow H_{1} \big{|} M \rightarrow D \rightarrow Z \big{|} Y \rightarrow D$.

\begin{figure}[H]
\centering
\subfigure[ ]{
\includegraphics[width=2.3cm]{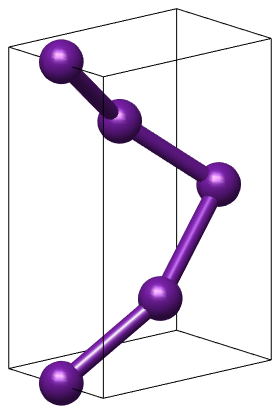}}\!\!
\subfigure[ ]{
\includegraphics[width=5cm]{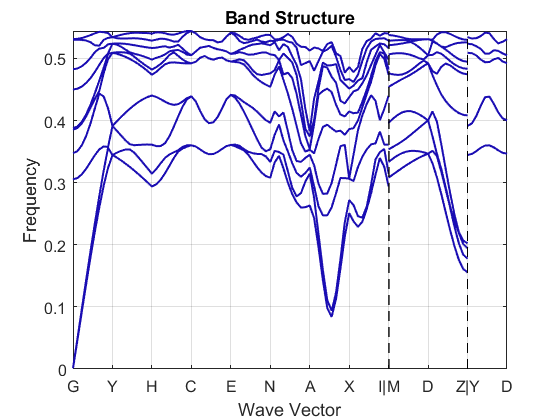}}
\caption{(a) The material satifies the space group No.4 P$2_1$. (b) The band structure for primitive monoclinic lattice.}
\end{figure}

\item {\bf{Base-Centered Monoclinic: }} The path of wave vector: $\Gamma \rightarrow Y \rightarrow F \rightarrow H \rightarrow Z \rightarrow I \rightarrow F_{1} \big{|} H_{1} \rightarrow Y_{1} \rightarrow X \rightarrow \Gamma \rightarrow N \big{|} M \rightarrow \Gamma$.

\begin{figure}[H]
\centering
\subfigure[ ]{
\includegraphics[width=4cm]{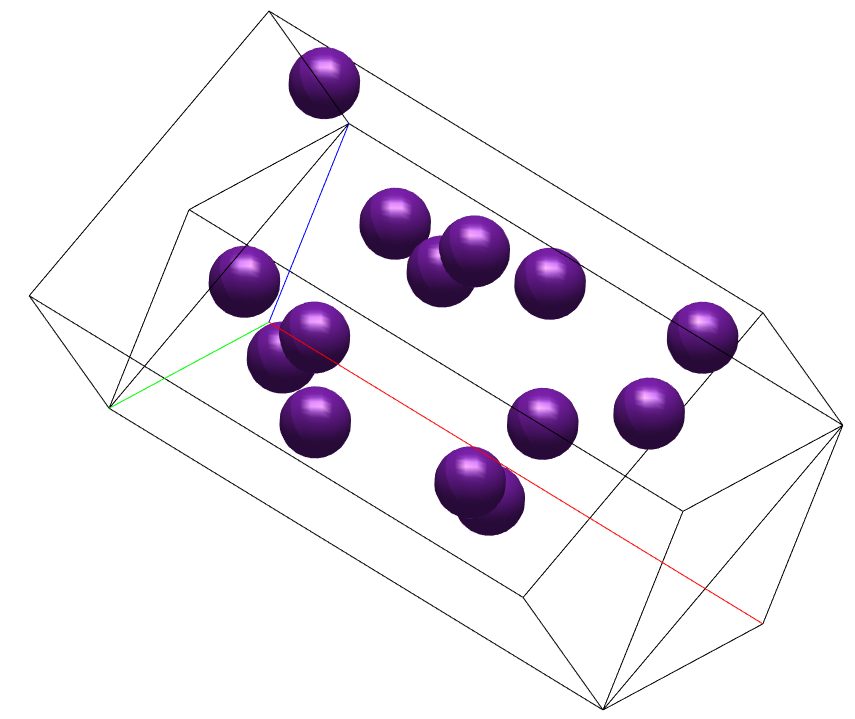}}\!\!
\subfigure[ ]{
\includegraphics[width=5cm]{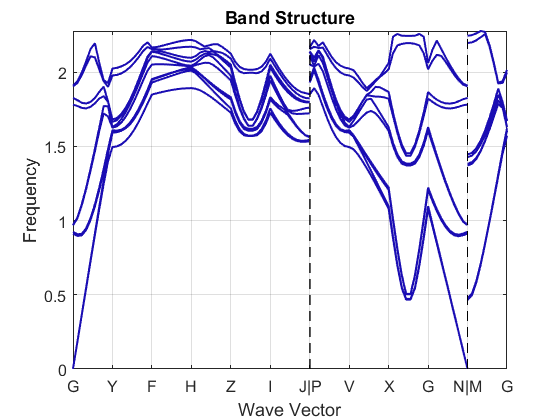}}
\caption{(a) The material satifies the space group No.5 C2. (b) The band structure for base-centered monoclinic lattice.}
\end{figure}

\item {\bf{Triclinic: }} The path of wave vector: $X \rightarrow \Gamma \rightarrow Y \big{|} L \rightarrow \Gamma \rightarrow Z \big{|} N \rightarrow \Gamma \rightarrow M \big{|} R \rightarrow \Gamma$.

\begin{figure}[H]
\centering
\subfigure[ ]{
\includegraphics[width=4.5cm]{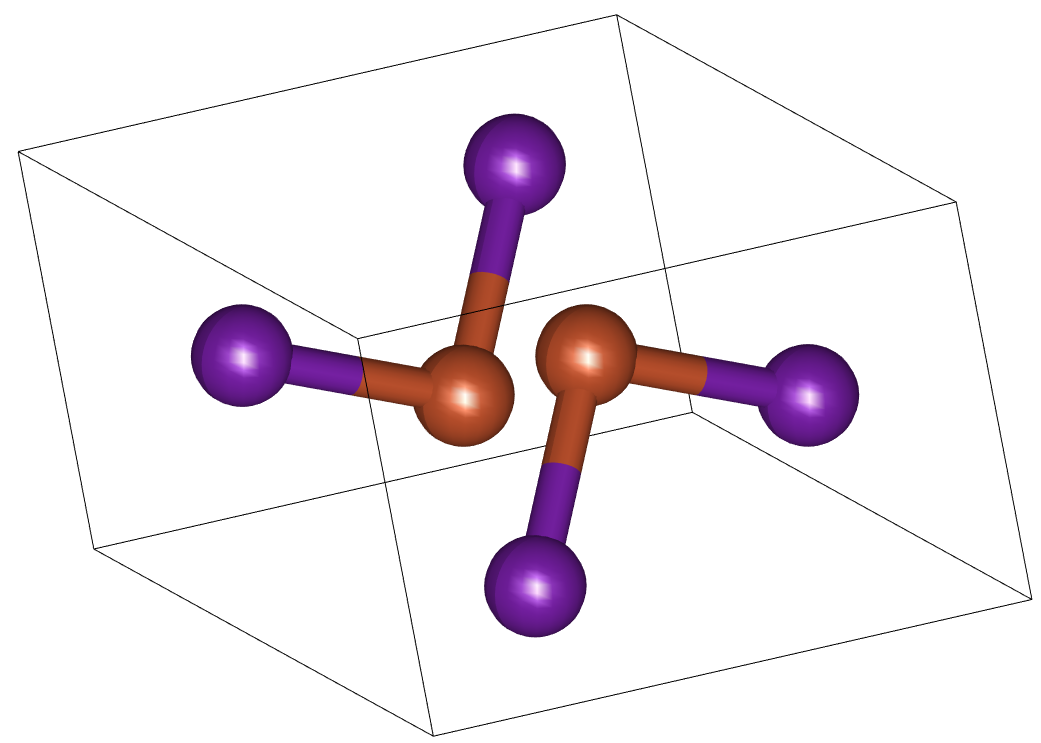}}\!\!
\subfigure[ ]{
\includegraphics[width=5cm]{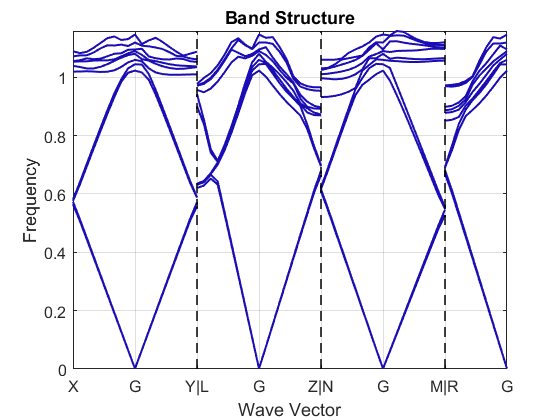}}
\caption{(a) The material satifies the space group No.2 P$\overline{1}$. (b) The band structure for triclinic lattice.}
\end{figure}

\end{itemize}

\section{Concluding Remarks}\label{section7}

In this section, we summarize the results of this dissertation and provide some future works.

The contributions in this disserlation can be conclude as follows:
\begin{enumerate}
\item We use Yee's scheme to discretize the equation $\nabla \times \nabla \times E = \lambda\varepsilon E$ for various lattice structures and find 4 general matrix representations that can be suitable for any situations.
\item We drive the eigen-decompositions of $K_{1}$, $K_{2}$, and $K_{3}$ for the 4 general matrix representations. There are only two kind of the decompositions which depend on the value $\cos\theta_{\alpha} - \cos\theta_{\beta}\cos\theta_{\gamma}$. Moreover, we get the eigen-decompositions of discrete partial derivative operators.
\item Using this decompositions, we successfully extend the powerful scheme in \cite{hhlw2013eig} to all the 3D Bravais lattices, the scheme include SVD for discrete curl operator, null space free method, and FFT based matrix-vector multiplications.
\item For each lattices, we design several photonic crystals that satisfy the space groups, and then compute their band structures as in section \ref{section6}. We use GPU to achieve the high performance computation, so the computational efficiency is far better than any other methods in the world.
\end{enumerate}

Next, we list some future works as follows:
\begin{enumerate}
\item In this dissertation, we only consider the perfect crystal, but there are still quasicrystals, non-crystalline solid, and crystals with defects we have not studied yet. In these materials, all of the techniques we developed  can also be applied.
\item We correct the angles $\theta'_{\alpha}$, $\theta'_{\beta}$, and $\theta'_{\gamma}$ between the lattice translation vectors, such kind of perturbation will cause a little error. One may develop another method to avoid the problem.
\item The computation domain is the primitive cell until now, but the supercell that involves different materials in each primitive cells is important now, we must extend our method to deal with the supercell soon.
\end{enumerate}

\bibliographystyle{abbrv}
\bibliography{ref}

\end{document}